\numberwithin{equation}{section}
\newtheorem{thm}{Theorem}[section]
\newtheorem{lem}[thm]{Lemma}
\newtheorem{cor}[thm]{Corollary}
\newtheorem{prop}[thm]{Proposition}
\newtheorem{defi}[thm]{Definition}
\theoremstyle{definition}
\theoremstyle{remark}
\newtheorem{remark}[thm]{Remark}
\newcommand{\ls}[2]{{\vphantom{#2}}^{#1\!}{#2}}
\newcommand{\mZ}{\mathbb{Z}}
\newcommand{\mQ}{\mathbb{Q}}
\newcommand{\mN}{\mathbb{N}}
\newcommand{\GL}{\operatorname{GL}}
\newcommand{\tr}{\operatorname{tr}}
\newcommand{\lda}{\lambda}
\newcommand{\bs}{\boldsymbol}
\newcommand{\scr}{\mathscr}
\newcommand{\ol}{\overline}
\newcommand{\Irr}{\operatorname{Irr}}
\newcommand{\al}{\alpha}
\newcommand{\be}{\beta}
\newcommand{\g}{\gamma}
\newcommand{\de}{\delta}
\newcommand{\ka}{\kappa}
\newcommand{\s}{\sigma}
\newcommand{\om}{\omega}
\newcommand{\Ind}{\operatorname{Ind}}
\newcommand{\Res}{\operatorname{Res}}
\newcommand{\Inf}{\operatorname{Inf}}
\newcommand{\lan}{\langle}
\newcommand{\ran}{\rangle}
\newcommand{\Lda}{\Lambda}
\newcommand{\witi}{\widetilde}
\newcommand{\Par}{\operatorname{Par}}
\newcommand{\Pow}{\operatorname{Pow}}
\newcommand{\CF}{\operatorname{CF}}
\newcommand{\cl}{\mathcal}
\newcommand{\Sym}{\mathrm{Sym}}
\newcommand{\PMap}{\operatorname{PMap}}
\newcommand{\wh}{\widehat}
\newcommand{\npa}{\medskip\noindent}
\newcommand{\pri}{\mathrm{pri}}
\newcommand{\diag}{\operatorname{diag}}
\newcommand{\ul}{\underline}
\newcommand{\Cart}{\operatorname{Cart}}
\newcommand{\vth}{\vartheta}
\newcommand{\ti}{\tilde}
\newcommand{\mtt}{\mathtt}
\newcommand{\G}{\Gamma}
\begin{document}

\title{Generalised Cartan invariants of symmetric groups}

\author{Anton Evseev}
\address{School of Mathematics, University of Birmingham, Edgbaston, Birmingham B15 2TT, UK}
\email{A.Evseev@bham.ac.uk}
\thanks{The author was supported by the EPSRC Postdoctoral Fellowship EP/G050244/1.}
\subjclass[2010]{Primary 20C30}

 \begin{abstract} 
K{\"u}lshammer, Olsson, and Robinson developed an $\ell$-analogue of modular representation theory of 
symmetric groups where $\ell$ is not necessarily a prime. They gave a conjectural combinatorial 
description for invariant factors of the Cartan matrix in this context. We confirm their conjecture 
by proving a more precise blockwise version of the conjecture due to Bessenrodt and Hill. 
 \end{abstract}

\maketitle

\section{Introduction}\label{sec:intro}

Fundamental theory of representations of a finite group $G$ 
over an algebraically closed field of characteristic $\ell>0$ was developed by Brauer. 
An essential feature 
of $\ell$-modular representation theory is the construction of two sets of class functions 
defined on the elements of $G$ of order prime to $\ell$, 
namely, the irreducible Brauer characters and the 
projective indecomposable characters (see e.g.\ \cite[Chapter 2]{NavarroBook}).
These sets are dual to each other with respect to the usual scalar product. 
Further, there is a natural partition of each of these sets 
(as well as the set of ordinary irreducible characters of $G$)
into disjoint subsets that correspond to the \emph{$\ell$-blocks} of $G$. 
For the symmetric group $S_n$, K{\"u}lshammer, Olsson, and Robinson~\cite{KuelshammerOlssonRobinson2003} generalised character-theoretic aspects of Brauer's theory to the case when $\ell$ is not necessarily a prime and developed an analogue of block theory in this case. We begin by reviewing some of their definitions. 

For any finite group $G$, denote by $\Irr(G)$ the set of ordinary irreducible characters of $G$ and 
by $\cl C(G)$ the abelian group $\mZ[\Irr(G)]$ of virtual characters of $G$. Let $\ell,n\in \mN$. 
An element $g\in S_n$ is called \emph{$\ell$-singular} if the decomposition of $g$ into disjoint 
cycles includes at least one cycle of length 
divisible by $\ell$. 
Define
\[
 \scr P(S_n) = \{ \xi \in \cl C(G) \mid \xi(g) = 0 \text{ for all } \ell\text{-singular } g\in S_n \}.
\]
 Let $\{ \phi_t \}_{t\in T}$ be a $\mZ$-basis of $\scr P(S_n)$, indexed by a finite set $T$. 
The $\ell$-modular \emph{Cartan matrix}
of $S_n$ is the $T\times T$-matrix $\Cart_{\ell} (n) = (\lan \phi_t, \phi_{t'} \ran)_{t,t'\in T}$,
where $\lan \cdot,\cdot \ran$ is the usual scalar product of class functions. 
In this paper we are only concerned with the invariant factors of $\Cart_{\ell} (n)$. They do not depend on the choice of the basis. (If $\ell$ is prime, then projective indecomposable characters defined with
respect to $\ell$ form 
a basis of $\scr P(S_n)$.)

The set $\Irr(S_n)$ is parameterised by the partitions of $n$ in a standard way, 
and we write $s_{\lda}$ for the irreducible character corresponding to a partition $\lda$. 
If $\lda =(\lda_1,\ldots,\lda_t)$ is a partition (so that $\lda_1\ge\cdots\ge \lda_t>0$), 
we write $|\lda| = \sum_i \lda_i$ and $l(\lda) =t$. 

Let $\rho$ be a partition which is an $\ell$-core  
(see~\cite[\S 2.7]{JamesKerber1981}) and $e=|\rho|$.
We denote by $\Irr(S_n, \rho)$ the set of $s_{\lda} \in \Irr(S_n)$ such that $\rho$ is the $\ell$-core of
$\lda$. Then $\Irr(S_n,\rho)$ is the (combinatorial) \emph{$\ell$-block}, as defined in~\cite{KuelshammerOlssonRobinson2003}.
Write $\cl C(S_n,\rho)=\mZ[\Irr(S_n,\rho)]$ and 
$\scr P(S_n, \rho) = \scr P(S_n) \cap \cl C(S_n,\rho)$. 
It follows from~\cite[Corollary 4.3]{KuelshammerOlssonRobinson2003} that 
$\scr P(S_n) =\oplus_{\rho} \scr P(S_n, \rho)$,
where $\rho$ runs over all $\ell$-cores. 

Define the Cartan matrix $\Cart_{\ell} (S_n,\rho)$ to be the Gram matrix of a 
$\mZ$-basis of $\scr P(S_n,\rho)$ (that is, replace $\scr P(S_n)$ by $\scr P(S_n,\rho)$ in the definition
of $\Cart_{\ell} (n)$). Suppose that $n=e+\ell w$ for some $w\in \mZ_{\ge 0}$ (otherwise, $\cl C(S_n,\rho)=0$). 
The integer $w$ is called the \emph{weight} of the block in question. 
By~\cite[Theorem 6.1]{KuelshammerOlssonRobinson2003}, the invariant factors of $\Cart_{\ell}(n,\rho)$ 
depend only on $\ell$ and $w$. 

Let $\Par$ be the set of all partitions and $\Par(w)$ be the set of partitions of $w$.
Let $\lda = (\lda_1,\ldots,\lda_t)\in \Par$. 
If $j\in \mN$, denote by $m_j (\lda)$ the number of indices $i$ such that $\lda_i = j$. 
If $p$ is a prime, write $v_p (k)$ for the $p$-adic valuation of $k\in \mN$ and 
\begin{equation}\label{eq:dp}
 d_p (k) = v_p (k!) = \sum_{i=1}^{\infty} \left\lfloor \frac{k}{p^i} \right\rfloor.
\end{equation}
For $r\in \mZ_{\ge 0}$, define
\begin{equation}\label{eq:cpr}
  c_{p,r} (\lda) = \sum_{\substack{j\in \mN \\ 0\le v_p (j)<r }} 
\Big( (r-v_p (j)) m_j(\lda) + d_p (m_j (\lda)) \Big).
\end{equation}
If $\ell\in \mN$ and $\ell= \prod_i p_i^{r_i}$ is the prime factorisation of $\ell$, set
\begin{equation}\label{eq:vth}
 \vth_{\lda} (\ell) = \prod_i p_i^{c_{p,r_i}(\lda)}
\end{equation}
(see~\cite[Definition 3.5]{BessenrodtHill2010}).

 
Let $a,b\in \mZ_{\ge 0}$. 
Write $a^{\star b}$ for the sequence $a,\ldots,a$ with $b$ entries. 
Define $k(b,a)$ to be the number of tuples $(\lda^{(1)},\ldots, \lda^{(b)})$ of partitions such
that $\sum_{i=1}^b |\lda^{(i)}| = a$.  
If $R\subset R'$ are rings and $A$ and $B$ are $R'$-valued $a\times b$-matrices, then
$A$ and $B$ are said to be \emph{equivalent} over $R$ if there exist $U\in \GL_a (R)$ and $V\in \GL_b (R)$ such that $B= UAV$. (If the ring $R$ is not specified, it is assumed to be $\mZ$.)
The main aim of this paper is to prove the following result, conjectured by Bessenrodt and Hill
(see~\cite[Conjecture 5.3]{BessenrodtHill2010}). 

\begin{thm}\label{thm:BH}
 Let $\ell\ge 2$ and $w$ be integers. Let $\rho$ be an $\ell$-core and $n=|\rho|+\ell w$. Then 
the matrix $\Cart_{\ell} (n,\rho)$ is equivalent to the diagonal matrix with diagonal entries
\[
 \vth_{\ell} (\lda)^{\star k(\ell -2, w-|\lda|)},
\]
where $\lda$ runs over all partitions such that $|\lda|\le w$. 
\end{thm}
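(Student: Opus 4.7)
The plan is to first exploit the block-invariance result \cite[Theorem~6.1]{KuelshammerOlssonRobinson2003}: since the invariant factors of $\Cart_{\ell}(n,\rho)$ depend only on $\ell$ and $w$, I may replace the given block by any other block of the same weight. The target is to pick a core deep enough (e.g.\ a Rouquier-type core, or one reached from $\rho$ by iterated Scopes moves) so that the bilinear form on $\scr P(S_n,\rho)$ acquires a clean wreath-product decomposition along the $\ell$ colours of the $\ell$-quotient, which is precisely the structure the answer demands.

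Next I would translate the problem into symmetric functions. Under the characteristic map, $\cl C(S_n)\otimes\mQ$ is identified with the degree-$n$ part of $\Lda_{\mQ}$, and $\scr P(S_n)\otimes\mQ$ corresponds to the subspace of polynomials in the power sums $p_i$ with $\ell\nmid i$. The $\ell$-quotient bijection on partitions, together with the plethystic identity $p_{\ell m}=p_{\ell}\circ p_m$, then identifies $\cl C(S_n,\rho)$ with a tensor product of $\ell$ copies of the symmetric function ring, one per colour of the $\ell$-quotient, and $\scr P(S_n,\rho)$ is carved out by imposing the $\ell$-singularity vanishing on each colour component.

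With this in hand, I would construct an explicit $\mZ$-basis of $\scr P(S_n,\rho)$ indexed by $(\ell-1)$-tuples of partitions of total size at most $w$, with one ``distinguished'' partition $\lda$ and the remaining $\ell-2$ partitions accounting for the outer multiplicity $k(\ell-2,w-|\lda|)$ in the statement, and compute its Gram matrix. The local contribution of $\lda$ should produce the diagonal entry $\vth_{\ell}(\lda)$: the identity
\[
(r-v_p(j))\,m_j(\lda)+d_p(m_j(\lda))=v_p\bigl((p^{r-v_p(j)})^{m_j(\lda)}\,m_j(\lda)!\bigr)
\]
identifies $c_{p,r}(\lda)$ as the $p$-adic valuation of the order of a wreath product $\mZ/p^{r-v_p(j)}\wre S_{m_j(\lda)}$, which is exactly the kind of centraliser that governs scalar products of power-sum symmetric functions. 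Unimodular row and column operations should then clear the off-diagonal part and yield the advertised diagonal form with the correct multiplicities.

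The main obstacle will be the Smith-form analysis when $\ell$ is composite. For $\ell=p$ prime a single $p$-adic Gaussian elimination suffices and the formula collapses to a single prime's contribution; but for $\ell=\prod_i p_i^{r_i}$ the same unimodular operations on the basis must simultaneously respect the $p$-adic behaviour at every $p_i\di\ell$. Tracking how a part $j$ of $\lda$ with various $p$-adic valuations contributes to $c_{p,r_i}(\lda)$ across all primes at once, and then verifying that the off-diagonal blocks indexed by distinct $(\ell-2)$-tuples of ``spectator'' partitions are unimodularly equivalent so that each $\vth_{\ell}(\lda)$ appears with the exact multiplicity $k(\ell-2,w-|\lda|)$, is the combinatorially heaviest step and the one where the argument for prime $\ell$ does not generalise by inspection.
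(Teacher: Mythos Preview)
Your outline tracks the paper's opening moves fairly closely: both start from the weight-only dependence and pass through a Rouquier-type isometry to reach a wreath-product/tensor decomposition indexed by the $\ell$ colours, and both arrive at a Gram matrix whose rows and columns are indexed by $(\ell-1)$-tuples of partitions of total size $w$. So the setup is sound. The problem is that what you have written is a plan, not a proof: you explicitly flag the Smith-form analysis for composite $\ell$ as ``the main obstacle'' and then leave it unsolved. That is precisely the content of the theorem.

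Two concrete gaps. First, your worry that a single family of unimodular row and column operations must work at every prime $p_i\mid\ell$ simultaneously is real, but the paper sidesteps it entirely by an algebraic trick you are missing: the relevant Gram matrix factors as a product of \emph{commuting} integer matrices, one for each prime power $p_i^{r_i}$ in the factorisation of $\ell$ (this comes from a multiplicativity property $A^{\wr}\cdot B^{\wr}=(AB)^{\wr}$ of the wreath construction, applied to the scalar matrices $(p_i^{r_i})$). Commuting factors with coprime determinants let the Chinese Remainder Theorem reduce the Smith form of the product to the Smith forms of the factors, so one never has to do simultaneous elimination across primes. Without this observation your plan has no mechanism for the composite case.

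Second, even granting the reduction to a single prime power $\ell=p^r$, the sentence ``for $\ell=p$ prime a single $p$-adic Gaussian elimination suffices'' badly understates the remaining difficulty. The case $r=1$ is classical (Olsson), and Hill had $r\le p$, but general $r$ was open and is the main technical contribution of the paper. The paper's route is to use Brauer's characterisation of characters to show that the transition matrix $M(\mathtt h,\tilde{\mathtt p})$ splits over $\mZ_{(p)}$ along the $p'$-parts of cycle types, reducing to partitions whose parts are powers of $p$; the resulting matrix is then analysed via a delicate factorisation $N=AC$ and a valuation estimate (Lemma~\ref{lem:U}) feeding into a general Smith-form lemma (Lemma~\ref{lem:invfac}). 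None of this is visible in your sketch, and your intuition that $c_{p,r}(\lda)$ is the $p$-valuation of a wreath-product order, while correct and suggestive, does not by itself produce the unimodular operations.
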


We note that the size of the diagonal matrix in Theorem~\ref{thm:BH} is $k(\ell -1,w)$. 

\begin{remark}
For prime numbers $\ell$, the elementary divisors of $\Cart_{\ell} (n,\rho)$ were determined 
by Olsson~\cite{Olsson1986}. Further, under the assumption that 
$r_i \le p_i$ for each $i$ in the above factorisation, Theorem~\ref{thm:BH} was proved 
in~\cite{BessenrodtHill2010} using results of~\cite{Hill2008}. 
Formulae for determinants of $\Cart_{\ell} (n,\rho)$ were given 
in~\cite{BessenrodtOlsson2003, BrundanKleshchev2002}.
\end{remark}

If $k\in \mN$ and $\pi$ is a set of primes, let $k_{\pi}$ be the greatest $a\in \mN$ such that
$a\mid k$ and all prime divisors of $a$ belong to $\pi$.  
Write $(\ell, k)$ for the greatest common divisor of $\ell$ and $k$, and let
$\pi(\ell, k)$ be the set of primes that divide $\ell/(\ell, k)$. 
For each $\lda\in \Par$, set
\[
 r_{\ell} (\lda) = \prod_{k=1}^{\infty} \left[
\left( \frac{\ell}{(\ell,k)} \right)^{\lfloor m_k (\lda)/\ell \rfloor}  \cdot
\left\lfloor \frac{m_k (\lda)}{\ell} \right\rfloor !_{\pi(\ell, k)} \right].
\]
The following corollary describes the invariant factors of $\Cart_{\ell} (n)$.
It was conjectured by K{\"u}lshammer, Olsson, and Robinson
(see~\cite[Conjecture 6.4]{KuelshammerOlssonRobinson2003}) and follows from Theorem~\ref{thm:BH} 
by~\cite[Theorem 5.2]{BessenrodtHill2010}.

\begin{cor}
 Let $\ell,n\in \mN$. The Cartan matrix $\Cart_{\ell} (n)$ is equivalent to the diagonal matrix 
with diagonal entries $r_{\ell} (\lda)$ where $\lda$ runs through the set of partitions 
$\lda = (\lda_1,\ldots,\lda_t)$ of $n$ such that $\ell \nmid \lda_i$ for all $i$. 
\end{cor}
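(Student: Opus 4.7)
The plan is to deduce the corollary by combining Theorem~\ref{thm:BH} with the $\ell$-block decomposition $\scr P(S_n) = \bigoplus_\rho \scr P(S_n, \rho)$ recalled in the introduction and then applying a purely combinatorial bijection due to Bessenrodt and Hill to translate the blockwise data into the desired indexing by partitions of $n$ with all parts prime to $\ell$.

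I would first choose a $\mZ$-basis of $\scr P(S_n)$ that is the union of $\mZ$-bases of the summands $\scr P(S_n, \rho)$. Since distinct blocks $\Irr(S_n, \rho)$ and $\Irr(S_n, \rho')$ are disjoint subsets of the orthonormal basis $\Irr(S_n)$ of $\cl C(S_n)$, the Gram matrix $\Cart_\ell(n)$ with respect to such a basis is the block diagonal matrix $\bigoplus_\rho \Cart_\ell(n, \rho)$, where $\rho$ ranges over all $\ell$-cores with $\ell \mid (n - |\rho|)$. Any block diagonal integer matrix is equivalent to the direct sum of diagonal forms of its blocks, and such a direct sum is itself diagonal with entries given by the concatenation. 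Applying Theorem~\ref{thm:BH} to each $\Cart_\ell(n,\rho)$ of weight $w_\rho := (n - |\rho|)/\ell$, it follows that $\Cart_\ell(n)$ is equivalent to a diagonal matrix whose diagonal entries are indexed by tuples $(\rho, \lda, \mu^{(1)}, \ldots, \mu^{(\ell-2)})$ in which $\rho$ is an $\ell$-core, $\lda, \mu^{(1)}, \ldots, \mu^{(\ell-2)}$ are partitions, and $|\rho| + \ell |\lda| + \ell\sum_{i=1}^{\ell-2} |\mu^{(i)}| = n$, the entry attached to such a tuple being $\vth_\lda(\ell)$.

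To complete the argument, I would invoke~\cite[Theorem 5.2]{BessenrodtHill2010}, which supplies a bijection between this indexing set of tuples and the set of partitions $\mu$ of $n$ with every part coprime to $\ell$, carrying $\vth_\lda(\ell)$ precisely to $r_\ell(\mu)$. Under this bijection the diagonal form just obtained becomes the claimed diagonal matrix with entries $r_\ell(\mu)$, proving the corollary. The essential obstacle is entirely combinatorial and is already handled by the cited Theorem 5.2, whose content involves the $\ell$-quotient decomposition of partitions together with the prime factorisation $\ell = \prod_i p_i^{r_i}$; granted that identity and Theorem~\ref{thm:BH}, the block decomposition reduces the corollary to a direct translation, so the only substantive work lies in the combinatorial bijection proved in~\cite{BessenrodtHill2010}.
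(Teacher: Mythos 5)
Your proof is correct and follows essentially the same route as the paper, which dispatches the corollary in one line by citing Theorem~5.2 of Bessenrodt--Hill together with Theorem~\ref{thm:BH}. You have simply unpacked that citation: the block decomposition $\scr P(S_n)=\bigoplus_\rho\scr P(S_n,\rho)$ makes $\Cart_\ell(n)$ block diagonal with blocks $\Cart_\ell(n,\rho)$, Theorem~\ref{thm:BH} diagonalises each block, and the Bessenrodt--Hill result supplies the combinatorial bijection (via $\ell$-core/$\ell$-quotient) matching the resulting tuple-indexed diagonal with the $r_\ell(\lambda)$ indexed by $\ell$-class-regular partitions of $n$.
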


\begin{remark}\label{rem:Donkin}
By a result of Donkin (see~\cite[Section 2, Remark 2]{Donkin2003}), 
the invariant factors of $\Cart_{\ell} (n)$ are the
same as those of the Cartan matrix of an Iwahori--Hecke algebra $\cl H_n (q)$ defined over any field where $q$ is a primitive $\ell$-th root of unity. In fact, 
Donkin's argument shows that $\Cart_{\ell} (n,\rho)$ is 
equivalent to the Cartan matrix of any block of weight $w$ in $\cl H_n (q)$. 
Thus, Theorem~\ref{thm:BH} gives a description of invariant factors of blocks of $\cl H_n (q)$. 
\end{remark}

The main step in the proof of Theorem~\ref{thm:BH} is to establish Theorem~\ref{thm:1t1p}, conjectured by Hill~\cite{Hill2008} (as well as 
Corollary~\ref{cor:1t1}, which follows from it). These results
describe the invariant factors of a certain 
$\Par(w)\times \Par(w)$-matrix, which is defined in Section~\ref{sec:scwr} and denoted by
$X = X^{(\mtt s,\mtt s)}_{\ell, w}$.

\begin{remark}
In~\cite[Theorem 1.1]{Hill2008}, Hill describes
the invariant factors of the Shapovalov form on the basic representation of any simply-laced affine 
Kac--Moody algebra in terms of the invariant factors of $X$. Thus, the proof of Theorem~\ref{thm:1t1p} completes a combinatorial description of the invariants of these Shapovalov forms. 
\end{remark}

Using results of Hill~\cite{Hill2008}, Bessenrodt and Hill~\cite{BessenrodtHill2010} proved that Theorem~\ref{thm:BH} is implied by 
Theorem~\ref{thm:1t1p}. Their reduction relies on the translation of the problem to Hecke algebras $\cl H_n (q)$ where $q$ is an $\ell$-th root of unity (see Remark~\ref{rem:Donkin}) and on 
results that relate the 
Grothendieck groups of finitely generated projective $\cl H_n (q)$-modules to the basic representation of the affine Kac--Moody algebra of type $A_{\ell -1}^{(1)}$
(see~\cite{Ariki1996},~\cite[Theorem 14.2]{Grojnowski1999} and~\cite[Chapter 9]{Kleshchev2005}). 
In Section~\ref{sec:scwr} we give a more direct and elementary proof of the reduction of
Theorem~\ref{thm:BH} to Theorem~\ref{thm:1t1p} that uses only character theory of symmetric groups and
wreath products. 
Our proof relies on an isometry constructed by Rouquier~\cite{Rouquier1994} between the block 
$\cl C(S_n, \rho)$ of Theorem~\ref{thm:BH} and 
the ``principal $\ell$-block'' of the wreath product $S_{\ell} \wr S_w$ 
 and on a result concerning class functions on wreath products proved in~\cite{Evseev2012}.

Intermediate results proved in Section~\ref{sec:scwr} show that certain matrices 
studied by Hill in~\cite{Hill2008} 
may be  interpreted naturally in terms of scalar products 
of class functions on $G\wr S_w$,
where $G$ is a finite group. 
These matrices are related to the inner product 
$\lan \cdot, \cdot \ran_{\ell}$ defined by Macdonald on the space of symmetric functions
  (see Remark~\ref{rem:Hill}).
The results of this paper determine the invariant factors of these matrices
(see Corollary~\ref{cor:Awrinv}).

Theorem~\ref{thm:1t1p} is proved in Sections~\ref{sec:powred} and~\ref{sec:fin}. 
In Section~\ref{sec:powred} we use Brauer's characterisation of characters to 
reduce Theorem~\ref{thm:1t1p} to the problem of finding the invariant
factors of a certain matrix $Y$ with rows and columns indexed only by the partitions
$\lda$ such that all 
parts $\lda_i$ are powers of a fixed prime $p$ (cf.\ the definitions before Theorem~\ref{thm:pow}). Finally, in Section~\ref{sec:fin}, we establish the invariant factors of $Y$ by a direct combinatorial argument and thereby complete the proof of 
Theorem~\ref{thm:BH}. 

\begin{remark}
 An important $\mZ$-grading on the Iwahori--Hecke algebras $\cl H_n (q)$ (and, more generally, on cyclotomic Hecke algebras of type $A$) was discovered by Brundan and Kleshchev~\cite{BrundanKleshchev2009a}. Recently, Ando, Suzuki, and Yamada~\cite{AndoSuzukiYamada2012} and Tsuchioka~\cite{Tsuchioka2012} have obtained formulae for determinants of \emph{graded} Cartan matrices and proposed conjectures concerning their invariant factors. In particular,~\cite[Conjecture 7.17]{Tsuchioka2012} generalises the statement of Theorem~\ref{thm:BH} (in the case when $\ell$ is a prime power). 
\end{remark}

\section{Notation and preliminaries}

In this section we introduce some general notation and review standard results that are used in the paper, 
in particular, those related to class functions on symmetric groups. 
Throughout, $\mZ_{\ge 0}$ and $\mN$ denote the sets of nonnegative and positive integers respectively.
If $a,b\in \mZ$, we write $[a,b]=\{ i\in \mZ \mid a\le i\le b\}$.

\npa \textbf{Matrices.}
Let $T$ and $Q$ be sets. If $A$ is a $T\times Q$-matrix, that is, a matrix 
with rows indexed by $T$ and columns indexed by $Q$, we write $A_{tq}$ for the $(t,q)$-entry of $A$. 
In Section~\ref{sec:scwr}, 
$A^n_{tq}$ denotes 
the $n$-th power of $A_{tq}$ (on the other hand, $(A^n)_{tq}$ is the $(t,q)$-entry of $A^n$).
All matrices considered will have only finitely many non-zero entries in each row and each column, so 
matrix multiplication is unambiguously defined even for infinite matrices. 
By $\diag\{(a_t)_{t\in T}\}$ we denote the diagonal $T\times T$-matrix with $(t,t)$-entry
equal to $a_t$ for each $t$. We write $A^{\tr}$ for the transpose of a matrix $A$. 
The identity $T\times T$-matrix
is denoted by $\mathbb I_T$. 

Let $R\subset R'$ be rings. As usual, $\GL_T(R)$ denotes the group of invertible $R$-valued $T\times T$-matrices $A$ such that $A^{-1}$ is $R$-valued. 
Two $R'$-valued $T\times Q$-matrices $A$ and $B$ are said to be 
\emph{row equivalent} over $R$ if there exists $U\in \GL_T (R)$ such that $B=UA$. 
The \emph{row space} of $A$ over $R$ is the $R$-span of the rows of $A$ as elements 
of $(R')^{Q}$, the free $R'$-module of vectors indexed by $Q$. 

\npa \textbf{Tuples and partitions.}
Let $T$ be a set and $w\in \mZ_{\ge 0}$. We define $I(T)$ to be the set of maps $j\colon T\to \mZ_{\ge 0}$ such that
$j(t) =0$ for all but finitely many $t\in T$. Further, $I_w (T)$ is the set of $j\in I(T)$
such that $\sum_t j(t) =w$. 

Suppose that $T$ is a finite set.
Denote by $\PMap(T)$ the set of all
maps from $T$ to $\Par$. If $\ul\lda\in \PMap(T)$, define 
$|\ul\lda| = \sum_{t\in T} |\ul\lda(t)|$. Set 
\[
\PMap_w(T) =\{ \ul\lda\in \PMap(T) \mid  |\ul\lda| = w \}.
\]
Note that $k(b,a) = |\PMap_a ([1,b])|$ for all $a,b\in \mZ_{\ge 0}$. 

The \emph{sum} of two partitions $\lda$ and $\mu$ is defined as the partition obtained by 
reordering the sequence $(\lda_1,\ldots,\lda_{l(\lda)},\mu_1,\ldots,\mu_{l(\mu)})$.
In particular, if 
$\ul\lda\in \PMap(T)$, then $m_j (\sum_{t\in T} \ul\lda(t)) = \sum_t m_j (\ul\lda(t))$ for all
$j\in \mN$. The sum of $n$ copies of $\lda$ is denoted by $\lda^{\star n}$. 

\npa \textbf{Class functions on symmetric groups.}
Let $\Lda = \oplus_{w\ge 0}\, \cl C(S_w)$. For any finite group $G$ write $\CF(G)$ for the 
set of $\mQ$-valued class functions on $G$. Then $\Lda_{\mQ} = \mQ \otimes_{\mZ} \Lda$ may be
identified with $\oplus_{w\ge 0} \CF(S_w)$. The scalar product $\lan \cdot, \cdot \ran$ 
on $\Lda_{\mQ}$ is defined via the standard scalar product on $\CF(S_w)$ in such a way that
the components $\CF(S_w)$ are orthogonal. 

By a \emph{graded basis} of $\Lda_{\mQ}$ we mean a $\mQ$-basis $\mtt u=(u_{\lda})_{\lda\in \Par}$ such 
that $(u_{\lda})_{\lda\in\Par(w)}$ is a basis of $\CF(S_w)$ for every $w$. 
If $\mtt u=(u_{\lda})$ and $\mtt v=(v_{\lda})$ are graded bases of $\Lda_{\mQ}$, we say that 
$(\mtt u,\mtt v)$ is a \emph{dual pair} if $\lan u_{\lda}, v_{\mu} \ran = \de_{\lda\mu}$ for all 
$\lda,\mu\in \Par$, where $\de_{\lda\mu}$ is the Kronecker delta. 

If $G, H$ are finite groups and $\phi\in \CF(G)$, $\psi\in \CF(H)$, then the outer tensor product $\phi\otimes \psi\in \CF(G\times H)$ is defined by $(\phi\otimes \psi)(g,h) =\phi(g) \psi(h)$. 
If $w=w_1+\cdots + w_n$ ($w_i\ge 0$), then
the direct product of the symmetric groups $S_{w_1},\ldots,S_{w_n}$
is viewed as a subgroup of $S_{w}$ (known as a \emph{Young subgroup}) in the usual way.
An element $f\in \Lda_{\mQ}$ is \emph{graded} if $f\in \CF(S_w)$ for some $w$. 
In this case we write $\deg(f)=w$. 
If $f$ and $f'$ are graded elements of $\Lda_{\mQ}$ of degrees $d$ and $w$ respectively, then their product is defined by 
\[
 ff' = \Ind_{S_d \times S_w}^{S_{d+w}} f\otimes f'.
\]
With this product, $\Lda_{\mQ}$ becomes a (graded) $\mQ$-algebra. 
The symbol $\Pi $, applied to elements of $\Lda_{\mQ}$, means this product.
When applied to sets or groups, $\Pi$ represents the usual direct product. 

By $A^{\times w}$ we mean the direct product of $w$ copies of a set or a group $A$. 
If $\phi$ is a class function on a group $G$, we write
$\phi^{\otimes w} = \phi\otimes \cdots \otimes \phi \in \CF(G^{\times w})$. 
If $U\le V$ are abelian groups, then $V^{\otimes w}$ is the tensor product (over $\mZ$) of $w$ copies
of $V$, and $U^{\otimes w}$ is viewed as a subgroup of $V^{\otimes w}$ in the obvious way. 

We will denote by $g_{\lda}$ an element of $S_{|\lda|}$ of cycle type $\lda\in \Par$.
We set 
\begin{equation}\label{eq:defz}
z_{\lda} = \prod_{i\in \mN} i^{m_i(\lda)} m_i (\lda)! = |C_{S_{|\lda|}} (g_{\lda})|.
\end{equation}
We will use graded bases $\mtt p=(p_{\lda})$, $\tilde{\mtt p}=(\tilde{p}_{\lda})$, $\mtt s=(s_{\lda})$,
$\mtt h=(h_{\lda})$ of $\Lda_{\mQ}$ defined as follows:
\begin{itemize}
 \item $p_{\lda} (g_{\mu}) = z_{\lda} \de_{\lda\mu}$ for all $\mu\in \Par(|\lda|)$;
 \item $\ti{p}_{\lda}(g_{\mu}) = \de_{\lda\mu}$, so that $\ti{p}_{\lda} = z^{-1}_{\lda} p_{\lda}$; 
 \item $s_{\lda}$ is the usual irreducible character of $S_{|\lda|}$ labelled by the partition $\lda$
(see~\cite[Eq.\ 2.3.8]{JamesKerber1981}), as in Section~\ref{sec:intro};
 \item $h_{n} = s_{(n)}$ and $h_{\lda} = h_{\lda_1} \cdots h_{\lda_t}$, 
where $\lda=(\lda_1,\ldots,\lda_t)$.
\end{itemize}
Note that $(\mtt p, \ti{\mtt p})$ and $(\mtt s, \mtt s)$ are dual pairs. 

While we find it convenient to use notation usually reserved for symmetric functions,
the elements just defined are to be viewed as class functions on symmetric groups, and our 
arguments are essentially character-theoretic. One may identify $\Lda$ with the 
ring of symmetric functions via the isomorphism of~\cite[\S I.7]{Macdonald1995}. 
With this identification, the elements $p_{\lda}$, $s_{\lda}$ and $h_{\lda}$ are the 
same as those defined in~\cite[\S I.2--3]{Macdonald1995}.

\section{Scalar products of class functions on wreath products}\label{sec:scwr}

We begin this section by summarising 
some notation and results concerning class functions on wreath products; for more detail, 
see~\cite[\S 2.3 and \S 4.1]{Evseev2012}. Let $G$ be a finite group and $w\in \mZ_{\ge 0}$. 
The wreath product $G\wr S_w$ consists of the tuples $(x_1,\ldots, x_w; \s)$ with
$x_i\in G$ and $\s\in S_w$. The group operation is defined by
\[
 (x_1,\ldots,x_w; \sigma) (y_1,\ldots, y_w; \tau) = (x_1 y_{\s^{-1}(1)}, \ldots, x_w y_{\s^{-1}(w)}; \s\tau),
\]
where we use the standard left action of $S_w$ on $[1,w]$. If $w=0$, then 
$G\wr S^w$ is the trivial group. 

By a \emph{cycle} in $S_w$ we understand either a non-identity cyclic permutation in $S_w$ 
or a $1$-cycle $(i)$ for some $i\in [1,w]$. Whenever $(i)$ is to be viewed as an element of $S_w$, 
it is interpreted as the identity element. The support of $(i)$ is defined as $\{ i \}$, while the
support of a non-identity cycle $\s$ is the set of points in $[1,w]$ moved by $\s$.
By $o(\s)$ we mean the order of a cycle $\s$, with the order of $(i)$ defined to be $1$.  
A tuple $\s_1,\ldots,\s_n$ is called a \emph{complete system} of cycles in $S_w$ 
if these cycles have disjoint supports and $\sum_i o(\s_i)=w$.

Whenever $\s$ is a cycle in $S_w$ and $x\in G$, we set
\[
 y_{\s} (x) = (1,\ldots,1,x,1,\ldots,1; \s) \in G\wr S_w,
\]
where $x$ appears in an entry belonging to the support of $\s$ (say, the first such entry). 
There is a unique equivalence relation on $G\wr S_w$ satisfying the following rule:
if $\s_1,\ldots,\s_n$ is a complete system of cycles, two elements of the form
$(u_1,\ldots,u_w; \tau)$ and $y_{\s_1}(x_1) \cdots y_{\s_n} (x_n)$ are 
equivalent if and only if $\tau = \s_1\cdots \s_n$ and 
$x_j = u_t u_{\s_j^{-1} (t)} \cdots u_{\s_j^{-(o(\s_j)-1)}}$ for all $j\in [1,n]$, where 
$t$ is the smallest element of the support of $\s_j$ (cf.~\cite[Eq.\ 4.2.1]{JamesKerber1981}).
Each equivalence class contains exactly one element of the form
$y_{\s_1} (x_1) \cdots y_{\s_n} (x_n)$ with $\s_1,\ldots,\s_n$ being a complete system,
and the equivalence class of such an element has size $|G|^{w-n}$. 
By~\cite[Theorem 4.2.8]{JamesKerber1981}, any two equivalent elements of $G\wr S_w$ are 
$G\wr S_w$-conjugate (even $G^{\times w}$-conjugate). By the same theorem,
if $\s_1,\ldots,\s_n$ is a complete system, two elements
$y_{\s_1} (x_1) \cdots y_{\s_n} (x_n)$ and $y_{\s_1}(u_1)\cdots y_{\s_n} (u_n)$
are $G\wr S_w$-conjugate if and only if there is a permutation $\tau$ of $[1,n]$ such that
$o(\s_j) = o(\s_{\tau j})$ and $x_j$ is $G$-conjugate to $u_{\tau j}$ for all $j\in [1,n]$. 

If $\phi\in \CF(G)$, we define $\phi^{\witi\otimes w} \in \CF(G\wr S_w)$ by setting
\[
 \phi^{\witi\otimes w} (y_{\s_1} (x_1) \cdots y_{\s_n} (x_n)) =
 \phi(x_1) \cdots \phi(x_n). 
\]
In the case when $\phi$ is a character afforded by a $\mQ G$-module, $\phi^{\witi\otimes w}$ 
is afforded by a corresponding $\mQ(G\wr S_w)$-module: see~\cite[Lemma 4.3.9]{JamesKerber1981}. 
Consider a tuple 
\begin{equation}\label{eq:Xi}
 \Xi = (  (\phi_1, f_1),\ldots, (\phi_n, f_n) )
\end{equation}
where $\phi_i \in \CF(G)$ and each $f_i$ is a graded element of $\Lda_{\mQ}$. 
Let $w_i = \deg(f_i)$ and suppose that $w=\sum_i w_i$. Then we define
\begin{equation}\label{eq:defzeta1}
 \zeta_{\Xi} = \Ind_{\prod_i (G\wr S_{w_i})}^{G\wr S_w}
 \bigotimes_{i=1}^n \left( \phi_i^{\witi\otimes w_i} \cdot \Inf_{S_{w_i}}^{G\wr S_{w_i}} f_i \right).
\end{equation}
Here, $\Inf_{{S_{w_i}}}^{G\wr S_{w_i}} f_i$ is the inflation of $f_i$, sending every
$g\in G\wr S_{w_i}$ to $f_i(g G^{\times w_i})$, and $\cdot$ is the inner tensor product:
$(f\cdot f') (g) = f(g)f'(g)$ for all $g$.
In the important special case when $\Xi = ((\phi,f))$ with $f\in \CF(S_w)$, we have
\[
 \zeta_{\Xi} = \zeta_{(\phi,f)} = \phi^{\witi{\otimes} w} \cdot \Inf_{S_w}^{G\wr S_w} f.
\]

Let $T$ be a finite set and $\phi\colon T\to \CF(G)$. 
For every $\ul\lda\in \PMap_w (T)$
 define $\zeta^{(\phi)}_{\ul\lda}$ to be equal to $\zeta_{\Xi}$ where 
\begin{equation}\label{eq:defzeta2}
 \Xi = ( ( \phi(t), s_{\ul\lda(t)} )  \mid t\in T ).
\end{equation}
If $T$ is a subset of $\CF(G)$ and $\phi$ is the identity map, we will write $\zeta_{\ul\lda}$
instead of $\zeta^{(\phi)}_{\ul\lda}$. 
These definitions are motivated, in part, by the fact that 
\begin{equation}\label{eq:wrirr}
 \Irr(G\wr S_w) = \{ \zeta_{\ul\lda} \mid \ul\lda \in \PMap_w (\Irr(G)) \}
\end{equation}
and the characters $\zeta_{\ul\lda}$ are distinct for different $\ul\lda\in \PMap_w (\Irr(G))$
(see~\cite[Theorem 4.3.34]{JamesKerber1981}).

For every $\lda\in \Par(w)$ and $\chi\in\CF(G\wr S_w)$ define 
$\om_{\lda} (\chi)\in \CF(G^{\times l(\lda)})$ by 
\[
 \om_{\lda}(\chi) (x_1,\ldots,x_n) = \chi (y_{\s_1} (x_1) \cdots y_{\s_n} (x_n) )
\]
where $n=l(\lda)$ and $\s_1,\ldots,\s_n$ form a complete system of cycles in $S_w$ with
$o(\s_i) = \lda_i$ for each $i$. We will view $\om_{\lda}(\chi)$ as an element of 
$\CF(G)^{\otimes l(\lda)}$.

Let $\cl X$ be a finitely generated 
subgroup of the abelian group $\CF(G)$. The subgroup $\cl X\wr S_w$ of
$\CF(G\wr S_w)$ is defined to be the $\mZ$-span of the class functions $\zeta_{\Xi}$ over all 
tuples $\Xi$ as in~\eqref{eq:Xi} such that $\phi_i\in \cl X$ and $f_i\in \Lda$ for all $i$.
A subgroup $U$ of a free abelian group $V$ is said to be \emph{pure} in $V$ 
if for every $v \in V$ such that $nv\in U$ for some $n\in \mZ-\{0\}$ we have $v\in U$.

\begin{thm}[\protect{\cite[Theorem 4.8 and Lemma 4.6]{Evseev2012}}]\label{thm:wrind}
 Let $\cl X$ be a pure subgroup of $\cl C(G)$. Then $\cl X\wr S_w$ is precisely 
the set of all $\xi\in \cl C(G\wr S_w)$ such that $\om_{\lda} (\xi) \in \cl X^{\otimes l(\lda)}$ for 
all $\lda\in \Par(w)$.  
\end{thm}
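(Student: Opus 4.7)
The plan is to fix a $\mZ$-basis of $\cl X$ and, using the purity hypothesis (so that $\cl X$ is a direct summand of the free abelian group $\cl C(G)$), extend it to a $\mZ$-basis $(\eta_i)_{i\in I\sqcup J}$ of $\cl C(G)$ in which $(\eta_i)_{i\in I}$ is a basis of $\cl X$. For each $\ul\nu\in\PMap_w(I\sqcup J)$, set $\ti\zeta_{\ul\nu}=\zeta_{\Xi_{\ul\nu}}$ with $\Xi_{\ul\nu}=((\eta_i,s_{\ul\nu(i)}))_{i\in\supp(\ul\nu)}$. A preliminary step is to prove that $\{\ti\zeta_{\ul\nu}:\ul\nu\in\PMap_w(I\sqcup J)\}$ is a $\mZ$-basis of $\cl C(G\wr S_w)$ and that its subset indexed by $\{\ul\nu:\supp(\ul\nu)\subseteq I\}$ spans $\cl X\wr S_w$ over $\mZ$. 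This rests on the decomposition identity
\[
(\phi+\phi')^{\witi\otimes w}=\sum_{a+b=w}\Ind_{(G\wr S_a)\times(G\wr S_b)}^{G\wr S_w}\phi^{\witi\otimes a}\otimes\phi'^{\witi\otimes b},
\]
which, applied recursively and combined with the expansion of any $f\in\Lda$ in the integral basis $\mtt h=(h_\lda)$, rewrites an arbitrary generator of $\cl X\wr S_w$ (respectively $\cl C(G\wr S_w)$) as a $\mZ$-linear combination of $\ti\zeta_{\ul\nu}$'s of the required form; a counting argument using~\eqref{eq:wrirr} then confirms that the spanning set is in fact a basis.

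The inclusion $\cl X\wr S_w\subseteq \{\xi:\om_\lda(\xi)\in\cl X^{\otimes l(\lda)}\text{ for all }\lda\}$ is verified on generators. In the base case $\Xi=((\phi,f))$ one obtains directly from~\eqref{eq:defzeta1} the formula $\om_\lda(\zeta_{(\phi,f)})=f(g_\lda)\,\phi^{\otimes l(\lda)}$. For a general $\Xi=((\phi_1,f_1),\ldots,(\phi_n,f_n))$, expanding the induction in~\eqref{eq:defzeta1} via the Frobenius formula applied to the Young-type subgroup $\prod_i(G\wr S_{w_i})\le G\wr S_w$ expresses $\om_\lda(\zeta_\Xi)$ as a $\mZ$-linear combination of tensors $\bigotimes_i f_i(g_{\lda^{(i)}})\,\phi_i^{\otimes l(\lda^{(i)})}$ indexed by ordered decompositions of the cycles of $\lda$ into blocks of total lengths $w_i$. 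Each summand lies in $\cl X^{\otimes l(\lda)}$ after suitable permutation of tensor factors, using $\phi_i\in\cl X$.

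For the converse, expand $\xi=\sum_{\ul\nu}c_{\ul\nu}\,\ti\zeta_{\ul\nu}$ with $c_{\ul\nu}\in\mZ$ and aim to force $c_{\ul\nu}=0$ whenever $\supp(\ul\nu)\cap J\ne\emptyset$. The induced-character formula presents $\om_\lda(\ti\zeta_{\ul\nu})$ as a $\mZ$-linear combination of pure tensors $\eta_{i_1}\otimes\cdots\otimes\eta_{i_{l(\lda)}}$ whose multiset $\{i_1,\ldots,i_{l(\lda)}\}$ reflects the support pattern of $\ul\nu$. The strategy is a triangularity statement: with respect to an ordering of $\PMap_w(I\sqcup J)$ refining ``more $J$-indices first,'' and pairing $\om_\lda(\xi)$ against basis elements of a chosen complement to $\cl X^{\otimes l(\lda)}$ inside $\cl C(G)^{\otimes l(\lda)}$, the resulting linear system on the $c_{\ul\nu}$'s turns out to be unit-upper-triangular, so each offending $c_{\ul\nu}$ can be solved for as an integer and shown to vanish under the integrality hypothesis $\om_\lda(\xi)\in\cl X^{\otimes l(\lda)}$. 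Establishing this triangularity---carefully controlling the combinatorial coefficients that arise from the Frobenius-formula expansion of $\om_\lda(\ti\zeta_{\ul\nu})$ and matching them against the appropriate complement---is the main obstacle and is where the bulk of the argument resides. Purity of $\cl X$ enters to guarantee that $\cl C(G)^{\otimes l(\lda)}$ splits as $\cl X^{\otimes l(\lda)}$ plus an integral complementary summand, so that the integrality conclusion can be read off at each stage of the triangular descent.
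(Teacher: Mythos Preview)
The paper does not prove this theorem; it is quoted from \cite[Theorem~4.8 and Lemma~4.6]{Evseev2012} and used as a black box. So there is no proof in the present paper to compare your proposal against.

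A few remarks on your proposal nonetheless. Your ``preliminary step'' --- that the $\ti\zeta_{\ul\nu}$ with $\supp(\ul\nu)\subseteq I$ form a $\mZ$-basis of $\cl X\wr S_w$ --- is essentially Lemma~\ref{lem:Zbasis} of the present paper, and that lemma is proved \emph{using} Theorem~\ref{thm:wrind}. So if you want an independent proof of Theorem~\ref{thm:wrind} you need an independent argument for linear independence, which you gesture at via the counting argument and~\eqref{eq:wrirr}; this is fine but should be made explicit (and indeed the paper's Remark after Lemma~\ref{lem:Zbasis} observes that linear independence can be obtained without Theorem~\ref{thm:wrind}).

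More seriously, your proposal for the hard direction is not a proof but a strategy: you assert that pairing $\om_\lda(\xi)$ against suitable complement elements yields a unit-upper-triangular system in the $c_{\ul\nu}$, and you yourself flag that ``establishing this triangularity\ldots is the main obstacle and is where the bulk of the argument resides.'' As written, nothing guarantees that the Frobenius-expansion coefficients arrange themselves triangularly with units on the diagonal under your chosen ordering; the combinatorics of how cycles distribute among the factors $G\wr S_{w_i}$ is genuinely delicate, and different $\ul\nu$ with the same $J$-content can contribute to the same pure tensor. Without a concrete choice of $\lda$ (depending on $\ul\nu$) and a verification that the leading coefficient is a unit, the argument has a gap at exactly the point you identify as the crux.
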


If $T$ is a finite set, let $I_w (T)$ denote the set of all maps $j\colon T\to \mZ_{\ge 0}$ such that
$\sum_{t\in T} j(t) = w$.

\begin{lem}\label{lem:Zbasis}
Let $\cl X$ be a finitely generated subgroup of the abelian group $\CF(G)$.
 Let $B$ be a $\mZ$-basis of $\cl X$. Then the class functions
$\zeta_{\ul\lda}$, $\ul\lda\in \PMap_w(B)$, form a $\mZ$-basis of $\cl X\wr S_w$. 
\end{lem}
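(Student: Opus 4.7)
The plan is to realize $\bigoplus_w \cl X \wr S_w$ as a $\mZ$-subalgebra of the commutative graded ring $R := \bigoplus_w \cl C(G \wr S_w)$ equipped with the induction product, and then to identify that subalgebra with $\Lda^{\otimes B}$. The first step is to show that for each $\phi \in \CF(G)$ the assignment $\Psi_\phi : f \mapsto \zeta_{((\phi, f))}$ is a $\mZ$-algebra homomorphism $\Lda \to R$: the identity $\zeta_{((\phi, f_1))} \cdot \zeta_{((\phi, f_2))} = \zeta_{((\phi, f_1 f_2))}$ reduces, via the projection formula and the commutativity of induction with inflation along $G^{\times(d_1 + d_2)} \lhd G \wr S_{d_1 + d_2}$, to the fact that $\phi^{\witi\otimes d_1} \otimes \phi^{\witi\otimes d_2}$ is the restriction of $\phi^{\witi\otimes(d_1 + d_2)}$ to $(G \wr S_{d_1}) \times (G \wr S_{d_2})$.

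The main computational step is the plethystic identity
\[
(a + b)^{\witi\otimes k} = \sum_{i+j=k} a^{\witi\otimes i} \cdot b^{\witi\otimes j}
\]
in $R$ (with the convention $a^{\witi\otimes 0} = 1$), which I would verify directly by evaluating both sides on a representative $y_{\s_1}(x_1) \cdots y_{\s_n}(x_n)$ and expanding $\prod_i (a(x_i) + b(x_i))$. Iterating this identity, and handling negative integer coefficients by a downward induction on $k$, shows that for any $\phi = \sum_{b \in B} c_b b \in \cl X$ with $c_b \in \mZ$, the element $\phi^{\witi\otimes k} = \Psi_\phi(h_k)$ belongs to the $\mZ$-subalgebra $R_B := \mZ[b^{\witi\otimes k'} : b \in B,\ k' \ge 1]$ of $R$. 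Combined with the ring homomorphism property of $\Psi_\phi$ and the fact that $\{h_k : k \ge 1\}$ generates $\Lda$ as a $\mZ$-algebra, this gives $\bigoplus_w \cl X \wr S_w = R_B$.

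It remains to identify $R_B$ with $\Lda^{\otimes B}$. The algebra map $\iota : \Lda^{\otimes B} \to R$ sending $h_k^{(b)} \mapsto b^{\witi\otimes k}$ has image $R_B$ by construction; for its injectivity, I would work over $\mQ$. By \eqref{eq:wrirr} and a standard tensor decomposition, $R_\mQ$ is a polynomial $\mQ$-algebra on $\{\Psi_\chi(p_k) : \chi \in \Irr(G),\ k \ge 1\}$, and since $B$ is $\mQ$-linearly independent in $\CF(G)$, the linear forms $\Psi_b(p_k) = \sum_\chi c^b_\chi \Psi_\chi(p_k)$ are $\mQ$-linearly independent in each fixed degree $k$; hence the collection $\{\Psi_b(p_k)\}_{b \in B,\ k \ge 1}$ is algebraically independent in $R_\mQ$ and $\iota_\mQ$ is injective, which forces injectivity of $\iota$. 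Under the resulting isomorphism $\Lda^{\otimes B} \cong R_B = \bigoplus_w \cl X \wr S_w$, the standard $\mZ$-basis $\{\bigotimes_b s_{\ul\lda(b)} : \ul\lda \in \PMap_w(B)\}$ of $(\Lda^{\otimes B})_w$ corresponds to $\{\zeta_{\ul\lda} : \ul\lda \in \PMap_w(B)\}$, proving the lemma. I expect the main obstacle to be the plethystic step: the positive-coefficient case of the displayed identity is a direct combinatorial check, but extending it uniformly to arbitrary $\mZ$-combinations and tracking integrality of the resulting coefficients requires a careful induction.
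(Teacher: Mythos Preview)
Your proposal is correct and gives a more structural argument than the paper's. Both proofs establish spanning in the same way: the paper cites the multivariate plethystic expansion $\phi^{\witi\otimes w} = \sum_{j\in I_w(B)}(\prod_\psi n_\psi^{j(\psi)})\,\Ind\bigotimes_\psi \psi^{\witi\otimes j(\psi)}$ from \cite[Lemma~2.5]{Evseev2012}, while you rederive the bivariate version and iterate. (Your worry about negative coefficients is real but easily handled: since $(a+b)^{\witi\otimes *}$ is multiplicative as a formal power series, $(-\psi)^{\witi\otimes k}$ is determined by the recursion $\sum_{i+j=k}\psi^{\witi\otimes i}(-\psi)^{\witi\otimes j}=0$ and hence lies in $R_B$ by induction on $k$; the cited lemma in fact proves the multivariate identity directly for arbitrary integer coefficients.)

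The real difference is in linear independence. The paper argues by dimension count: it constructs explicit class functions $\xi_{\lambda,\bs\psi}$ and invokes Theorem~\ref{thm:wrind} to place them in $\cl X'\wr S_w$ for a suitable pure envelope $\cl X'$. Your route is cleaner: you identify $\bigoplus_w \cl X\wr S_w$ with $\Lda^{\otimes B}$ via the coproduct map $\iota=\bigotimes_b \Psi_b$, checking injectivity over $\mQ$ by observing that each $\Psi_b(p_k)=\zeta_{(b,p_k)}$ is supported on elements projecting to a single $k$-cycle and there equals $k\cdot b(x)$, hence genuinely \emph{linear} in $b$. This reduces injectivity to algebraic independence of finitely many linearly independent linear forms inside the polynomial ring $R_{\mQ}\cong \Lda_{\mQ}^{\otimes\Irr(G)}$, which is immediate. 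The paper's remark after the lemma already notes that Theorem~\ref{thm:wrind} is not essential here; your argument is one concrete way to avoid it.

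One small correction: since $\cl X$ is only assumed to be a subgroup of $\CF(G)$, not of $\cl C(G)$, the ambient ring should be $\bigoplus_w \CF(G\wr S_w)$ rather than $\bigoplus_w \cl C(G\wr S_w)$; nothing else changes.
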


\begin{proof}
First, we show that $\cl X\wr S_w$ is equal to the $\mZ$-span $V$ of the class functions $\zeta_{\ul\lda}$, 
$\ul\lda\in \PMap_w (B)$. We argue by induction on $w$. 
Consider a generator $\zeta_{\Xi}$ of $\cl X\wr S_w$, where $\Xi$ is as in~\eqref{eq:Xi} 
(with $\phi_i\in \cl X$ and $f_i\in \Lda$ for all $i$). We are to show that $\zeta_{\Xi}\in V$. 
By~\eqref{eq:defzeta1} and the inductive hypothesis, we immediately obtain $\zeta_{\Xi}\in \cl X\wr S_w$ unless $\deg(f_i)=0$ for all but one $i$. 
So we may assume that $\Xi = (\phi, f)$ for some $\phi\in \cl X$ and $f\in \Lda$. 
Write $\phi = \sum_{\psi\in B} n_{\psi} \psi$, where $n_{\psi}\in \mZ$. 
By~\cite[Lemma 2.5]{Evseev2012}, we have
\[
 \phi^{\witi\otimes w} = \sum_{j\in I_w (B)} \left( \prod_{\psi\in B} n_{\psi}^{j(\psi)} \right)
\Ind_{\prod_{\psi} G\wr S_{j(\psi)}}^{G\wr S_w} 
\left( \bigotimes_{\psi\in B} \, \psi^{\witi\otimes j(\psi)} \right).
\]
By the inductive hypothesis, the summand corresponding to $j$ lies in $V$ provided 
$j(\psi)<w$ for all $\psi\in B$. However, if $j(\psi)=w$ for some $\psi$, then the 
corresponding summand $n_{\psi}^w \psi^{\witi\otimes w}$ belongs to $V$ by definition. 
Hence, $\phi^{\witi\otimes w}\in V$, and it follows that 
$\zeta_{(\phi,f)} = \phi^{\witi\otimes w} \cdot \Inf_{S_w}^{G\wr S_w} f \in V$. 

Let $\cl X' = \mQ[\cl X]\cap \cl C(G)$, where $\mQ[\cl X]$ is the $\mQ$-span of $\cl X$ in $\CF(G)$. 
Then $\cl X'$ is a pure subgroup of $\cl C(G)$ and has dimension $|B|$. 
Let $B'$ be a $\mZ$-basis of $\cl X'$. 
Let $\lda=(\lda_1,\ldots,\lda_n)\in \Par(w)$ and $\psi_1,\ldots,\psi_n\in B$. 
Write $\bs\psi$ for the tuple $(\psi_1,\ldots,\psi_n)$.
Due to the above description of conjugacy classes of $G\wr S_w$, 
there exists a unique $\xi_{\lda,\bs\psi}\in \CF(G\wr S_w)$ such that 
$\om_{\lda} (\xi_{\lda,\bs\psi})= \psi_1\otimes \cdots \otimes \psi_n$ and
$\om_{\mu}(\xi_{\lda,\bs\psi})=0$ for all $\mu\ne \lda$. 
Clearly, the class functions $\xi_{\lda,\bm\psi}$ constructed in this way are 
linearly independent over $\mQ$. The number of such pairs $(\lda,\bs\psi)$ is $k(|B|,w)$. Indeed, 
a bijection from the set of these pairs onto $\PMap_w (B')$ is constructed as follows:
$(\lda,\bs\psi)\mapsto \ul\nu\in \PMap_w (B')$ where $\ul\nu(\psi)$ is the partition 
obtained by ordering the tuple $(\lda_i \mid \psi_i = \psi)$ (for each $\psi\in B'$). 
Let $(\lda,\bs\psi)$ be one of these pairs. Then 
$\om_{\mu} (\xi_{\lda,\bs\psi}) \in (\cl X')^{\otimes l(\mu)}$ for all $\mu\in \Par(w)$.
Further, $t\xi_{\lda,\bs\psi}\in \cl C(G\wr S_w)$ for some $t\in \mN$. Hence,
by Theorem~\ref{thm:wrind}, $t\xi_{\lda,\bs\psi} \in \cl X' \wr S_w$. 
Therefore, 
\[
\dim_{\mQ} (\mQ\otimes _{\mZ} (\cl X\wr S_w)) = 
\dim_{\mQ} (\mQ\otimes_{\mZ} (\cl X'\wr S_w)) \ge k(|B|,w) = |\PMap_w(B)|.
\]
The result follows. 
\end{proof}

\begin{remark}
Theorem~\ref{thm:wrind} is not really needed to prove linear independence in Lemma~\ref{lem:Zbasis}: 
for example, one 
can generalise the proofs of~\cite[Proposition 7.3 and Corollary 7.4]{Evseev2012}.
\end{remark}

Fix $\ell\in \mN$.
As in Section~\ref{sec:intro}, let $\rho$ be an $\ell$-core partition, with $|\rho|=e$. 
Let 
\[
\Irr_{\pri} (S_\ell)=\{ s_{(\ell-i,1^i)} \mid i\in [0,\ell-1]\}.
\]
(We write $1^i$ instead of $1^{\star i}$.) 
Write $\CF_{\pri}(S_\ell)$ for the $\mQ$-span of $\Irr_{\pri}(S_\ell)$. 
As in~\cite[Definition 3.3]{Evseev2012}, let 
\[
\Irr_{\pri}(S_\ell\wr S_w) = 
\{ \zeta_{\ul{\lda}} \mid \ul\lda \in \PMap_w (\Irr_{\pri} (S_\ell)) \}
\]
and $\cl C_{\pri}(S_\ell\wr S_w) = \mZ[\Irr_{\pri}(S_\ell\wr S_w)]$.
(If $\ell$ is a prime, then $\Irr_{\pri}(S_\ell\wr S_w)$ is the set of irreducible characters belonging 
to the principal $\ell$-block of $S_\ell\wr S_w$.)

Let $x\in S_\ell$ be an $\ell$-cycle. 
Define $\scr P_{\pri} (S_\ell\wr S_w)$ to be the set of all 
$\xi\in \cl C_{\pri} (S_\ell\wr S_w)$
such that
\begin{equation}\label{eq:xiproj}
 \xi(y_{\s_1} (x) y_{\s_2}(z_2) \cdots y_{\s_r} (z_r))=0
\end{equation}
whenever $\s_1,\ldots,\s_n$ is a complete system in $S_w$ and $z_2,\ldots,z_n\in S_\ell$. 
By~\cite[Th{\'e}or{\`e}me 2.11]{Rouquier1994}, 
there exists an  isomorphism  
$F\colon \cl C(S_{\ell w+e}, \rho) \to \cl C_{\pri}(S_\ell\wr S_w)$ of abelian groups
such that $F$ is an isometry. Moreover, we have
$F(\scr P(S_{\ell w+e},\rho)) = \scr P_{\pri}(S_\ell \wr S_w)$ due to the 
commutative diagram in the statement of~\cite[Th{\'e}or{\`e}me 2.11]{Rouquier1994}, 
if one interprets the vertical arrows of that diagram using Th{\'e}or{\`e}me 2.6 and Corollaire 2.10 of~\cite{Rouquier1994}.
(See also the proof of~\cite[Theorem 3.7]{Evseev2012}.)

Let $\xi\in \scr P_{\pri} (S_\ell\wr S_w)$ and $\mu\in \Par(w)$. Write $n=l(\mu)$. 
By~\eqref{eq:xiproj}, 
the class function $\om_{\mu} (\xi)$ belongs to the $\mQ$-vector space $V$ of all 
$\al\in \CF(S_\ell^{\times n})$ such that
$\al (z_1,\ldots,z_n)=0$ whenever at least one $z_i$ is an $\ell$-cycle. 
We have $\dim_{\mQ} V = j^n$
where $j$ is one less than the number of conjugacy classes in $S_\ell$. 
Since $j$ is the $\mZ$-rank of
$\scr P (S_\ell)$, the $\mZ$-rank of $\scr P (S_\ell)^{\otimes n}$ is $j^n$.
Clearly, $\scr P(S_\ell)^{\otimes n} \subset V$. Hence, $V$ is the $\mQ$-span of 
$\scr P(S_\ell)^{\otimes n}$.
Since $\scr P(S_\ell)$ is pure in $\cl C(S_\ell)$, we have 
$\scr P(S_\ell)^{\otimes n} = V\cap \cl C(S_\ell)^{\otimes n}$.
By~\cite[Lemma 4.11]{Evseev2012}, $\om_{\mu} (\xi) \in \cl C(S_\ell)^{\otimes n}$. 
Hence, $\om_{\mu} (\xi) \in \scr P (S_\ell)^{\otimes n}$.

Further, by Theorem~\ref{thm:wrind}, $\om_{\mu}(\xi) \in \cl C_{\pri}(S_\ell)^{\otimes n}$ for all $\mu$. 
Let $\cl X = \cl C_{\pri}(S_\ell)\cap \scr P(S_\ell)$. Since both $\cl C_{\pri} (S_\ell)$ and 
$\scr P(S_\ell)$ are pure in $\cl C(S_\ell)$, one easily sees that
\[
\cl X^{\otimes n} = \cl C_{\pri}(S_\ell)^{\otimes n} \cap \scr P(S_\ell)^{\otimes n} 
\ni \om_{\mu}(\xi).
\]
Therefore, by Theorem~\ref{thm:wrind}, $\scr P_{\pri}(S_\ell\wr S_w) = \cl X\wr S_w$.

For each $i\in [0,\ell-2]$ let
$\be_i = s_{(\ell-i,1^i)} + s_{(\ell-i-1,1^{i+1})}$. 
(When $\ell$ is prime, $\be_i$ are the projective indecomposable characters of
the principal block of $S_\ell$.)
By~\cite[Eq.\ 2.3.17]{JamesKerber1981}, $s_{(\ell-i,1^i)}(x)=(-1)^i$ for each $i$. 
Therefore, the set $\cl B = \{ \be_i \}_{i=0}^{l-2}$ is a $\mZ$-basis of $\cl X$. 
By Lemma~\ref{lem:Zbasis}, 
it follows that the set $\{ \zeta_{\ul{\lda}} \}_{\ul{\lda}\in \PMap_w (\cl B)}$ is a $\mZ$-basis of
$\cl X\wr S_w = \scr P_{\pri} (S_\ell\wr S_w)$. 
Since $F$ preserves scalar products and maps 
$\scr P(S_{\ell w+e}, \rho)$ onto $\cl X\wr S_w$, 
 we have proved the following result.

\begin{prop}\label{prop:isored}
 The matrix $\Cart_{\ell}(\ell w+e,\rho)$ is equivalent to the 
$\PMap_w(\cl B)\times \PMap_w(\cl B)$-matrix with $(\ul{\lda}, \ul{\mu})$-entry equal
to $\lan \zeta_{\ul{\lda}}, \zeta_{\ul{\mu}} \ran$.  
\end{prop}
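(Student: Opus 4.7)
The plan is to assemble the ingredients already developed in the paragraphs preceding the statement. Three facts have been established there: (a) Rouquier's isometry $F$ provides a scalar-product-preserving isomorphism of abelian groups $\cl C(S_{\ell w+e},\rho) \to \cl C_{\pri}(S_\ell \wr S_w)$ which restricts to an isomorphism $\scr P(S_{\ell w+e},\rho) \to \scr P_{\pri}(S_\ell \wr S_w)$; (b) the identification $\scr P_{\pri}(S_\ell\wr S_w) = \cl X\wr S_w$ with $\cl X = \cl C_{\pri}(S_\ell)\cap \scr P(S_\ell)$ (which was obtained using Theorem~\ref{thm:wrind} together with $\ell$-singularity considerations); and (c) the fact that $\cl B=\{\be_i\}_{i=0}^{\ell-2}$ is a $\mZ$-basis of $\cl X$, deduced from $s_{(\ell-i,1^i)}(x)=(-1)^i$.

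Given these, the plan is as follows. First, I would invoke Lemma~\ref{lem:Zbasis} with $G=S_\ell$ and with the basis $\cl B$ of $\cl X$; this yields that $\{\zeta_{\ul\lda}\}_{\ul\lda\in \PMap_w(\cl B)}$ is a $\mZ$-basis of $\cl X\wr S_w=\scr P_{\pri}(S_\ell\wr S_w)$. Next, I would transport this basis back through $F^{-1}$ to obtain a $\mZ$-basis $\{F^{-1}(\zeta_{\ul\lda})\}_{\ul\lda\in \PMap_w(\cl B)}$ of $\scr P(S_{\ell w+e},\rho)$. By the definition of $\Cart_\ell(\ell w+e,\rho)$ and the observation recorded in Section~\ref{sec:intro} that its equivalence class is independent of the chosen $\mZ$-basis, the matrix $\Cart_\ell(\ell w+e,\rho)$ is equivalent to the Gram matrix
\[
\bigl(\lan F^{-1}(\zeta_{\ul\lda}), F^{-1}(\zeta_{\ul\mu})\ran\bigr)_{\ul\lda,\ul\mu\in \PMap_w(\cl B)}.
\]
Finally, since $F$ preserves the scalar product, this Gram matrix coincides entry-wise with $(\lan \zeta_{\ul\lda}, \zeta_{\ul\mu}\ran)_{\ul\lda,\ul\mu}$, which is the desired matrix.

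There is no substantive obstacle: the heavy lifting (Rouquier's isometry, the identification of $\scr P_{\pri}$ with a wreath-product invariant, and Lemma~\ref{lem:Zbasis}) has already been carried out. The only minor point worth verifying is that two Gram matrices of the same symmetric bilinear form with respect to two different $\mZ$-bases of the same free $\mZ$-module are equivalent in the sense of Section~\ref{sec:intro}; this is immediate, because a change of $\mZ$-basis is encoded by a matrix $U\in \GL_{k(\ell-1,w)}(\mZ)$ and the Gram matrices are then related by $A \mapsto U^{\tr} A U$, which is a particular case of the equivalence $A \mapsto (U^{\tr}) A U$ with $U^{\tr}\in \GL(\mZ)$ as well. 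Thus the proposition will follow directly from the preceding discussion once these steps are written out in order.
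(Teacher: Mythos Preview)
Your proposal is correct and follows exactly the same approach as the paper: the proposition is stated immediately after the preparatory paragraphs, and its proof consists precisely of invoking Lemma~\ref{lem:Zbasis} to obtain the basis $\{\zeta_{\ul\lda}\}_{\ul\lda\in\PMap_w(\cl B)}$ of $\cl X\wr S_w=\scr P_{\pri}(S_\ell\wr S_w)$ and then using that $F$ is a scalar-product-preserving isomorphism carrying $\scr P(S_{\ell w+e},\rho)$ onto this group. Your remark about the change-of-basis relation $A\mapsto U^{\tr}AU$ makes explicit what the paper leaves implicit in Section~\ref{sec:intro}.
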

%
%
%

Observe that the Gram matrix $(\lan \be_i,\be_j \ran)_{0\le i,j\le \ell-2}$ is 
\begin{equation}\label{eq:CartA}
 \begin{pmatrix}
  2 & 1 & 0 & \cdots & 0 & 0 \\
  1 & 2 & 1 & \cdots & 0 & 0 \\
  0 & 1 & 2 & \ddots & \vdots & 0 \\
  \vdots & \vdots & \ddots & \ddots & \ddots & \vdots \\
  0 & 0 & \cdots & \ddots & 2 & 1 \\
  0 & 0 & 0 & \cdots & 1 & 2
 \end{pmatrix}
\end{equation}
  %
After conjugation by the diagonal matrix with the $(i,i)$-entry equal to $(-1)^i$, this becomes
the classical Cartan matrix of type $A_{\ell-1}$. As is well known, the invariant factors of this matrix are $\ell,1,1,\ldots,1$ (with $1$ appearing $\ell-2$ times). 

This observation and Proposition~\ref{prop:isored} suggest the following general problem: given 
a finite set $T$ and a map $\phi\colon T\to \cl C(G)$ for a finite group $G$, describe
the invariant factors of the $\PMap_w (T)\times \PMap_w(T)$-matrix 
$(\lan \zeta_{\ul{\lda}}^{(\phi)}, \zeta_{\ul{\mu}}^{(\phi)} \ran)_{\ul{\lda}, \ul{\mu}}$ in terms of the
invariant factors of the $T\times T$-matrix $(\lan \phi(t), \phi(q) \ran)_{t,q\in T}$. 
In the case when $|T|=1$, the answer is given by Theorem~\ref{thm:1t1p}, which is 
proved in Sections~\ref{sec:powred} and~\ref{sec:fin}, and by Corollary~\ref{cor:1t1}. 
The rest of this section is devoted to an unsurprising reduction of the general problem to the case $|T|=1$ (see Corollary~\ref{cor:Awrinv}).

%
%
%
%

\begin{defi}\label{def:Awr}
 Let $\mtt u=(u_{\lda})$ and $\mtt v=(v_{\lda})$ be graded bases of $\Lda_{\mQ}$. 
Let $A$ be a $T\times Q$-matrix, where $T$ and $Q$ are finite sets. 
Then $A^{\wr}(\mtt u,\mtt v)$ is the $\PMap(T)\times \PMap(Q)$-matrix defined by 
\begin{flalign}
 (A^{\wr}(\mtt u,\mtt v))_{\ul{\lda}\ul{\mu}} =  \quad & \notag \\
 = \sum_{\ul{\nu} }
& \left(
\prod_{t} \left\lan u_{\ul{\lda}(t)},\textstyle\prod_q \tilde{p}_{\ul{\nu}(t,q)} \right\ran \cdot
\prod_q \left\lan v_{\ul{\mu}(q)}, \textstyle\prod_t p_{\ul{\nu}(t,q)} \right\ran \cdot
\displaystyle\prod_{t,q} A_{tq}^{l(\ul\nu(t,q))}   \right), \label{eq:defAwr}
\end{flalign}
where $\ul\nu$ runs through $\PMap(T\times Q)$ and $t,q$ run through $T,Q$ respectively. 
\end{defi}

Note that the summand indexed by $\ul{\nu}$ in the above formula is zero unless
$|\ul\lda| = |\ul\nu| = |\ul\mu|$. Write $A^{\wr w}(\mtt u,\mtt v)$ for the 
$\PMap_w (R)\times \PMap_w (T)$-submatrix of $A^{\wr}(\mtt u,\mtt v)$. Then $A^{\wr}(\mtt u,\mtt v)$ is block-diagonal, with
blocks equal to $A^{\wr w}(\mtt u,\mtt v)$, $w\ge 0$. 
The preceding definition is motivated by the following result.

\begin{lem}\label{lem:Awr}
 Let $\phi\colon T\to \CF(G)$ and $\psi\colon Q\to \CF(G)$ be arbitrary maps, 
where $T,Q$ are finite sets and $G$ is a finite group. 
Let $A = (\lan \phi(t), \psi(q) \ran)_{t\in T, \, q\in Q}$. Then for every $w\ge 0$ and  
$\ul\lda\in\PMap_w(T), \, \ul\mu\in \PMap_w (Q)$, we have 
\[ \lan \zeta^{(\phi)}_{\ul\lda}, \zeta^{(\psi)}_{\ul\mu}\ran = A^{\wr w} (\mtt s, \mtt s)_{\ul\lda \ul\mu}. \] 
\end{lem}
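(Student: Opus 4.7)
The plan is to compute $\lan \zeta^{(\phi)}_{\ul\lda}, \zeta^{(\psi)}_{\ul\mu} \ran$ directly by evaluating both characters on cycle representatives, summing over conjugacy classes of $G\wr S_w$, and then matching the result to the explicit formula defining $A^{\wr w}(\mtt s, \mtt s)_{\ul\lda \ul\mu}$ via a combinatorial reindexing.

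The first step is to establish a \emph{coloring formula} for the induced character $\zeta_\Xi$ associated with an arbitrary tuple $\Xi = ((\phi_i, f_i))_{i=1}^n$: for any complete system of cycles $\s_1, \ldots, \s_R$ in $S_w$ and $x_1, \ldots, x_R \in G$,
\[
\zeta_\Xi(y_{\s_1}(x_1) \cdots y_{\s_R}(x_R)) = \sum_c \prod_{i=1}^n f_i\Bigl(\prod_{c(j) = i} \s_j\Bigr) \prod_{j:\, c(j) = i} \phi_i(x_j),
\]
where $c\colon [1,R] \to [1,n]$ runs over all colorings with $\sum_{c(j) = i} o(\s_j) = \deg f_i$ for each $i$. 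This is the wreath-product analogue of the familiar induction formula on symmetric groups: the summation records the Mackey decomposition of $\Ind^{G\wr S_w}_{\prod_i G\wr S_{w_i}}$ according to how the cycles of $\s_1 \cdots \s_R$ are distributed among the Young blocks, while the product of $\phi_i$'s comes from the simple observation that $\phi_i^{\witi\otimes w_i}$ evaluates on $\prod_{c(j)=i} y_{\s_j}(x_j)$ as $\prod_{c(j)=i}\phi_i(x_j)$.

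For the second step, the equivalence-class decomposition from the excerpt gives
\[
\lan \zeta^{(\phi)}_{\ul\lda}, \zeta^{(\psi)}_{\ul\mu} \ran = \sum_{\nu \in \Par(w)} \frac{1}{z_\nu\, |G|^{l(\nu)}} \sum_{(x_j) \in G^{l(\nu)}} \zeta^{(\phi)}_{\ul\lda}(g)\, \zeta^{(\psi)}_{\ul\mu}(g),
\]
where $g = y_{\s_1}(x_1) \cdots y_{\s_{l(\nu)}}(x_{l(\nu)})$ for any chosen complete system of type $\nu$. The usual inverse in the scalar product may be dropped here because all class functions in sight are $\mQ$-valued. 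Substituting the coloring formula into both factors produces a sum over triples $(\nu, c, d)$, where $c\colon [1,l(\nu)] \to T$ and $d\colon [1,l(\nu)] \to Q$ are colorings respecting the size constraints imposed by $\ul\lda$ and $\ul\mu$. For each such triple the inner sum over $(x_j)$ factorises as $\prod_j \sum_{x \in G} \phi(c(j))(x)\,\psi(d(j))(x) = |G|^{l(\nu)} \prod_j A_{c(j), d(j)}$, which exactly cancels the $|G|^{l(\nu)}$ denominator.

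In the third step, I would reindex the triple sum over $(\nu, c, d)$ as a sum over $\ul\nu \in \PMap_w(T \times Q)$ by letting $\ul\nu(t, q)$ be the partition whose parts are the multiset $\{\nu_j : c(j)=t,\ d(j)=q\}$. For fixed $\ul\nu$, the partition $\nu = \sum_{t,q} \ul\nu(t, q)$ is determined, and the number of $(c, d)$ producing $\ul\nu$ is the product of multinomial coefficients $\prod_k m_k(\nu)!\big/\prod_{t,q} m_k(\ul\nu(t, q))!$, which after combining with $1/z_\nu$ collapses to $1/\prod_{t,q} z_{\ul\nu(t, q)}$. Writing $\eta(t) := \sum_q \ul\nu(t, q)$ and $\xi(q) := \sum_t \ul\nu(t, q)$, the scalar product reduces to
\[
\sum_{\ul\nu} \frac{1}{\prod_{t,q} z_{\ul\nu(t,q)}} \prod_t s_{\ul\lda(t)}(g_{\eta(t)}) \prod_q s_{\ul\mu(q)}(g_{\xi(q)}) \prod_{t,q} A_{tq}^{l(\ul\nu(t,q))},
\]
which matches the definition of $A^{\wr w}(\mtt s, \mtt s)_{\ul\lda \ul\mu}$ once one uses $\tilde p_\lda = z_\lda^{-1} p_\lda$ together with the multiplicativity $p_\lda p_\mu = p_{\lda + \mu}$ to evaluate $\lan s_{\ul\lda(t)}, \prod_q \tilde p_{\ul\nu(t,q)} \ran = s_{\ul\lda(t)}(g_{\eta(t)}) / \prod_q z_{\ul\nu(t,q)}$ and $\lan s_{\ul\mu(q)}, \prod_t p_{\ul\nu(t,q)} \ran = s_{\ul\mu(q)}(g_{\xi(q)})$. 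The main obstacle is the multinomial bookkeeping in this last reindexing; the coloring formula is essentially standard but requires some care to set up precisely.
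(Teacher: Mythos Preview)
Your proof is correct and takes a genuinely different route from the paper's. The paper proceeds via the Mackey formula: it parameterises the double $\prod_t(G\wr S_{|\ul\lda(t)|})$--$\prod_q(G\wr S_{|\ul\mu(q)|})$-cosets by maps $j\in I_w(T\times Q)$, restricts each factor to the intersection subgroup, expands the restrictions of $s_{\ul\lda(t)}$ and $s_{\ul\mu(q)}$ in the $p_\nu/\tilde p_\nu$ basis, and then invokes the auxiliary Lemma~\ref{lem:wrsimple}, which computes $\lan \zeta_{(\phi,p_\lda)},\zeta_{(\psi,\tilde p_\mu)}\ran=\de_{\lda\mu}\lan\phi,\psi\ran^{l(\lda)}$ directly on conjugacy classes. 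Your argument bypasses Mackey entirely: you first establish a closed formula for $\zeta_\Xi$ on cycle representatives (the ``coloring formula''), then sum over conjugacy classes of $G\wr S_w$ and reindex. In effect, your coloring formula absorbs both the Mackey decomposition and the content of Lemma~\ref{lem:wrsimple} into a single explicit evaluation, at the cost of having to set that formula up carefully. The paper's approach is more modular and reuses Lemma~\ref{lem:wrsimple} as a clean building block; yours is more self-contained and makes the combinatorics of the reindexing by $\ul\nu\in\PMap_w(T\times Q)$ completely transparent.

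One small remark: your justification for dropping the inverse in the scalar product (``because all class functions are $\mQ$-valued'') is slightly imprecise, since for a general $\mQ$-valued class function $\psi$ on $G$ one need not have $\psi(x)=\psi(x^{-1})$. The cleanest fix is simply to track the inverse through: $g^{-1}$ is equivalent to $y_{\s_1^{-1}}(x_1^{-1})\cdots y_{\s_R^{-1}}(x_R^{-1})$, the $s_{\ul\mu(q)}$ factors are unchanged (symmetric-group characters are invariant under inversion), and the inner sum over $x_j$ becomes $\sum_{x}\phi(c(j))(x)\,\psi(d(j))(x^{-1})=|G|\,A_{c(j),d(j)}$, exactly as needed. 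This does not affect your argument.
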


First, we prove a simpler lemma.

\begin{lem}\label{lem:wrsimple}
 Let $G$ be a finite group and $\phi,\psi\in \CF(G)$. If $\lda,\mu\in \Par(w)$, then
\[
\lan \zeta_{(\phi,p_{\lda})}, \zeta_{(\psi, \ti{p}_{\mu})} \ran = 
\de_{\lda\mu} \lan \phi, \psi \ran^{l(\lda)}.
\]  
\end{lem}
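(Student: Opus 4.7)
The plan is to compute the scalar product directly from the definition, exploiting the simple form of the power-sum bases $\mtt p$ and $\ti{\mtt p}$. By the definition immediately preceding the lemma, $\zeta_{(\phi,p_\lda)} = \phi^{\witi\otimes w}\cdot\Inf_{S_w}^{G\wr S_w} p_\lda$ and similarly for the $\ti{p}_\mu$ version, so
\[
 \lan \zeta_{(\phi,p_\lda)}, \zeta_{(\psi,\ti{p}_\mu)}\ran = \frac{1}{|G\wr S_w|} \sum_{g\in G\wr S_w} \phi^{\witi\otimes w}(g)\,\psi^{\witi\otimes w}(g)\,(\Inf p_\lda)(g)\,(\Inf \ti{p}_\mu)(g).
\]
The first reduction is to observe that $(\Inf p_\lda)(g)(\Inf \ti{p}_\mu)(g)$ depends only on the cycle type $\nu$ of the underlying permutation $\bar g\in S_w$ and, by the formulas $p_\lda(g_\nu)=z_\lda\de_{\lda\nu}$ and $\ti{p}_\mu(g_\nu)=\de_{\mu\nu}$, vanishes unless $\lda=\mu=\nu$, in which case it equals $z_\lda$. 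This immediately yields the $\de_{\lda\mu}$ factor, so from here on one may assume $\lda=\mu$ and $n=l(\lda)$.

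For the remaining sum over elements $g\in G\wr S_w$ whose underlying permutation has cycle type $\lda$, I would fix $\bar g=\tau$ with $\tau$ of cycle type $\lda$ and choose an ordered complete system $\s_1,\ldots,\s_n$ of cycles of $\tau$. The preimage of $\tau$ in $G\wr S_w$ has size $|G|^w$ and is partitioned into $|G|^n$ equivalence classes of size $|G|^{w-n}$, indexed by the values $(x_1,\ldots,x_n)\in G^n$ attached to the cycles. On each such equivalence class $\phi^{\witi\otimes w}$ and $\psi^{\witi\otimes w}$ are constant with values $\prod_i\phi(x_i)$ and $\prod_i\psi(x_i)$, so
\[
 \sum_{g:\,\bar g=\tau}\phi^{\witi\otimes w}(g)\psi^{\witi\otimes w}(g)
 = |G|^{w-n}\sum_{(x_1,\ldots,x_n)\in G^n}\prod_{i=1}^n \phi(x_i)\psi(x_i)
 = |G|^w\lan\phi,\psi\ran^n.
\]
Summing over the $w!/z_\lda$ permutations $\tau$ of cycle type $\lda$ and multiplying by $z_\lda/|G\wr S_w|=z_\lda/(|G|^w w!)$, the factors cancel cleanly to give $\lan\phi,\psi\ran^{l(\lda)}$, as required.

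No step should pose serious difficulty; the only mild bookkeeping point is the counting of the fibres $\{g:\bar g=\tau\}$ and the use of constancy of $\phi^{\witi\otimes w}$ on equivalence classes (a consequence of the definition of $\phi^{\witi\otimes w}$ together with the conjugacy description quoted from \cite[Theorem 4.2.8]{JamesKerber1981} and recalled earlier in the section). Everything else is an immediate consequence of the definitions of $p_\lda$, $\ti{p}_\lda$, and inflation.
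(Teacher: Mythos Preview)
Your proof is correct and follows essentially the same approach as the paper's: both use the support condition on $p_\lda$ and $\ti p_\mu$ to reduce to $\lda=\mu$, then fix an underlying permutation of cycle type $\lda$, exploit the partition of its preimage into equivalence classes of size $|G|^{w-n}$ indexed by $(x_1,\ldots,x_n)\in G^n$, and count the $w!/z_\lda$ such permutations. The bookkeeping and cancellation are exactly as in the paper.
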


\begin{proof} The proof is similar to that of~\cite[Lemma 7.2]{Evseev2012}.
 Observe that $\zeta_{(\phi,p_{\lda})}$ vanishes outside the preimage in $G\wr S_w$ of the 
conjugacy class of $S_w$ consisting of the elements of cycle type $\lda$. A similar statement 
holds for $\zeta_{(\psi, \ti{p}_{\mu})}$, so the lemma holds if $\lda\ne \mu$. 
Assume that $\lda=\mu$ and fix a complete system of cycles $\s_1,\ldots,\s_n$ with orders 
$\lda_1,\ldots,\lda_n$ in $S_w$, where $n=l(\lda)$. 
With respect to the equivalence relation on $G\wr S_w$ 
described above, the equivalence class of an element of the form 
$y_{\s_1}(x_1)\cdots y_{\s_n}(x_n)$ contains exactly $|G|^{w-n}$
elements, which are all conjugate to $y_{\s_1} (x_1)\cdots y_{\s_n} (x_n)$. 
Also, $\s=\s_1\cdots\s_n$ has $w!/z_{\lda}$ conjugates in
$S_w$. Therefore,
\[
 \lan \zeta_{(\phi,p_{\lda})}, \zeta_{(\psi, \tilde{p}_{\lda} )} \ran = 
(z_{\lda}|G|^n)^{-1} p_{\lda}(\s) \ti{p}_{\lda}(\s) 
\sum_{x_1,\ldots,x_n\in G} \prod_{i=1}^n \phi(x_i) \psi(x_i^{-1})
= \lan \phi,\psi \ran^{n}. \qedhere
\]
\end{proof}

\begin{proof}[Proof of Lemma~\ref{lem:Awr}]
One may parameterise the double $\prod_{t} S_{|\ul{\lda}(t)|}$-$\prod_q S_{|\ul{\mu}(q)|}$-cosets in $S_{w}$ 
by the maps $j\in I_w (T\times Q)$ such that
\begin{equation}\label{eq:condI}
 \sum_q j(t,q) = |\ul{\lda}(t)| \text{ for all } t\in T \quad
\text{ and } \quad \sum_t j(t,q) = |\ul\mu(q)|  \text{ for all } q\in Q. 
\end{equation}
Here, as usual, the double coset containing $g\in S_w$ corresponds to the map $j$ defined by
$S_{|\ul\lda(t)|} \cap \ls{g}S_{|\ul\mu(q)|} \simeq S_{j(t,q)}$, where $S_{|\ul\lda(t)|}$ and 
$S_{|\ul\mu(q)|}$ are the 
appropriate direct factors of the two Young subgroups being considered. 
Using the definition of 
$\zeta_{\ul\lda}^{(\phi)}$ and $\zeta_{\ul\mu}^{(\psi)}$ (see~\eqref{eq:defzeta1}) and applying the Mackey formula,
we see that $\lan \zeta_{\ul\lda}^{(\phi)}, \zeta_{\ul\mu}^{(\psi)}  \ran = \sum_{j} a_{j}$ 
where the sum is over all $j\in I_w (T\times Q)$ satisfying~\eqref{eq:condI} and the summands are
\begin{equation}\label{eq:aj}
 a_j = \left\lan \Res^{\prod_t {L\wr S_{|\ul\lda(t)|}}}_{\prod_{t,q} L\wr S_{j(t,q)}} 
\bigotimes_t \zeta_{(\phi(t), s_{\ul\lda(t)} )}, \,
\Res^{\prod_q L\wr S_{|\ul\mu(q)|}}_{\prod_{t,q} L\wr S_{j(t,q)}}
\bigotimes_q \zeta_{(\psi(q), s_{\ul\mu(q)})} \right\ran. 
\end{equation}
Note that, whenever $D$ is a finite set, $i\in I_w(D)$, $\al\in \CF(G)$, and $f\in \Lda_{\mQ}$, we have
\begin{equation}\label{eq:reszeta}
 \Res^{L\wr S_w}_{\prod_{d\in D} L\wr S_{i(d)}} \zeta_{(\al, f)} 
= \zeta_{\left(\al, \, \Res^{S_w}_{\prod_d S_{i(d)}} f\right)}.
\end{equation}

Fix a map $j\in I_w (T\times Q)$ satisfying~\eqref{eq:condI}. 
For every $q\in Q$, 
\begin{equation}\label{eq:Awr2}
 \Res^{S_{|\ul\mu(q)|}}_{\prod_t S_{j(t,q)}} s_{\ul\mu(q)} = 
 \sum_{\ul\nu} \left( \lan s_{\ul\mu(q)}, \textstyle\prod_t p_{\ul\nu(t)} \ran \cdot
\bigotimes_t \tilde{p}_{\ul\nu(t)} \right),
\end{equation}
where the sum is over all $\ul\nu \in \PMap_{|\ul\mu(q)|} (T)$ such that $\ul\nu(t) = j(t,q)$ for all $t$. 
Indeed, $\lan s_{\ul\mu(q)}, \prod_t p_{\ul\nu (t)} \ran$ is the value of the character 
$s_{\ul\mu(q)}$ on an
element of cycle type $\sum_t \ul\nu(t)$.
Similarly, for every $t\in T$, 
\begin{equation}\label{eq:Awr3}
 \Res^{S_{|\ul\lda(t)|}}_{\prod_t S_{j(t,q)}} s_{\ul\lda(t)} =
\sum_{\ul\eta} \left(
\lan s_{\ul\lda(t)}, \textstyle\prod_q \tilde{p}_{\ul\eta(q)} \ran \cdot
\bigotimes_q p_{\ul\eta(q)} \right),
\end{equation}
where the sum is over all $\ul\eta\in \PMap_{|\lda(t)|} (T)$ 
such that $|\ul\eta(q)| = j(t,q)$ for all $q$.
After using~\eqref{eq:reszeta} and substituting~\eqref{eq:Awr2} and~\eqref{eq:Awr3}, 
Eq.~\eqref{eq:aj} becomes
\[
\begin{split}
 a_j &= \sum_{\ul\eta,\, \ul\nu} \left( 
\prod_{t} \lan s_{\ul\lda(t)}, \textstyle\prod_q \tilde{p}_{\ul\eta(t,q)} \ran \cdot
\displaystyle\prod_t \lan s_{\ul\mu(q)}, \textstyle\prod_t p_{\ul\nu(t,q)} \ran \cdot
\displaystyle\prod_{t,q} \lan \zeta_{(\phi(t),\, p_{\ul\eta(t,q)})}, 
\zeta_{(\psi(q),\, \tilde{p}_{\ul\nu(t,q)})}\ran \right) \\
&= \sum_{\ul\nu} \left(
\prod_{t} \lan s_{\ul\lda(t)}, \textstyle\prod_q \tilde{p}_{\ul\eta(t,q)} \ran \cdot
\displaystyle\prod_q \lan s_{\ul\mu(q)}, 
\textstyle\prod_t p_{\ul\nu(t,q)} \ran \cdot
\displaystyle\prod_{t,q} \lan \phi(t), \psi(q) \ran^{l(\ul\nu(t,q))} \right).
\end{split}
\]
Here $\ul\eta$ and $\ul\nu$ run through the set of elements of $\PMap_w(T\times Q)$ such that
$|\ul\eta(t,q)|=j(t,q)=|\ul\nu(t,q)|$ for all $t,q$, and the second equality holds by Lemma~\ref{lem:wrsimple}. 
 Summing over all $j$ satisfying~\eqref{eq:condI}, we obtain
\[
\begin{split}
 \lan \zeta_{\ul\lda}^{(\phi)}, \zeta_{\ul\mu}^{(\psi)} \ran &= \sum_j a_j \\
&=
\sum_{\ul\nu}  \left( 
\prod_{t} \lan s_{\ul\lda(t)}, \textstyle\prod_q \tilde{p}_{\ul\nu(t,q)} \ran \cdot
\displaystyle\prod_q \lan s_{\ul\mu(t)}, \textstyle\prod_t p_{\ul\nu(t,q)} \ran \cdot
\displaystyle\prod_{t,q} A_{tq}^{l(\ul\nu(t,q))}  \right),
\end{split}
\]
where $\ul\nu$ now runs through the elements of $\PMap_w (T\times Q)$ such that
$\sum_q |\ul\nu(t,q)| = |\ul\lda(t)|$ for all $t$ and $\sum_t |\ul\nu(t,q)| =|\ul\mu(q)|$ for all $q$.
Moreover, this formula remains true if we sum over \emph{all} $\ul\nu \in \PMap(T\times Q)$, as the extra summands 
are all equal to $0$. Comparing with Definition~\ref{def:Awr}, we deduce the result.
\end{proof}

\begin{remark}
Let $(\mtt u,\mtt v)$ be any dual pair of graded bases of $\Lda_{\mQ}$.
 Lemma~\ref{lem:Awr} remains true if one replaces $A^{\wr} (\mtt s, \mtt s)$ by $A^{\wr} (\mtt u,\mtt v)$ 
and replaces $s_{\lda}$ in the definitions of $\zeta_{\ul\lda}^{(\phi)}$ and $\zeta_{\ul\mu}^{(\psi)}$
(cf.~\eqref{eq:defzeta2}) by $u_{\lda}$ and $v_{\lda}$ respectively. 
\end{remark}

In the remainder of this section, $T,Q,Z$ denote arbitrary finite sets. 
Let $M$ be a $\Par\times \Par$-matrix. 
The $\PMap(T)\times \PMap(T)$-matrix $M^{\otimes T}$ is defined by
$(M^{\otimes T})_{\ul\lda\ul\mu} = \prod_{t\in T} M_{\ul\lda(t),\ul\mu(t)}$.
Thus, $M^{\otimes T}$ may be identified with the tensor product of $|T|$ copies of $M$.  
If $\mtt u=(u_{\lda})$ and $\mtt u'=(u'_{\lda})$ are graded bases of $\Lda$, the \emph{transition matrix} 
$M(\mtt u,\mtt u')$ is the $\Par\times\Par$-matrix defined by
the identity
\begin{equation}\label{eq:trans}
 u_{\lda} = \sum_{\mu\in \Par} 
M(\mtt u,\mtt u')_{\lda\mu} u'_{\mu} \quad\text{ for all }\lda\in \Par.
\end{equation} 
Let $M(\mtt u,\mtt u';w)$ be the $\Par(w)\times \Par(w)$-submatrix of $M(\mtt u,\mtt u')$.
Then $M(\mtt u,\mtt u')$ is block-diagonal with blocks $M (\mtt u,\mtt u'; w)$, $w\ge 0$. 

\begin{lem}\label{lem:conj}
 Let $A$ be a $T\times Q$-matrix. Suppose that $(\mtt u,\mtt v)$ and $(\mtt u',\mtt v')$ are dual pairs.
Let $M=M(\mtt u,\mtt u')$. Then
\[
 A^{\wr}(\mtt u,\mtt v) = M^{\otimes T} A^{\wr}(\mtt u',\mtt v') (M^{-1})^{\otimes Q}.
\]
\end{lem}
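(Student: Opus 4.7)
The plan is to expand both graded bases $\mtt u$ and $\mtt v$ in terms of $\mtt u'$ and $\mtt v'$ using the transition matrix $M$, and then exploit the linearity of the scalar products in the defining formula \eqref{eq:defAwr}.

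First I would work out how $\mtt v$ relates to $\mtt v'$. Write $v_\mu = \sum_{\nu} N_{\nu\mu} v'_\nu$ for some $\Par\times\Par$-matrix $N$. Using both duality relations and \eqref{eq:trans}, we get
\[
\de_{\lda\mu} = \lan u_\lda, v_\mu\ran = \sum_{\nu,\kappa} M_{\lda\nu} N_{\kappa\mu} \lan u'_\nu, v'_\kappa\ran = \sum_\nu M_{\lda\nu} N_{\nu\mu} = (MN)_{\lda\mu},
\]
so $N = M^{-1}$. Hence $v_\mu = \sum_\nu (M^{-1})_{\nu\mu} v'_\nu$.

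Next I would substitute $u_{\ul\lda(t)} = \sum_{\ka} M_{\ul\lda(t),\ka}\, u'_\ka$ and $v_{\ul\mu(q)} = \sum_{\ka} (M^{-1})_{\ka,\ul\mu(q)}\, v'_\ka$ into the right-hand side of~\eqref{eq:defAwr}. Since each factor $\lan u_{\ul\lda(t)}, \prod_q \tilde p_{\ul\nu(t,q)}\ran$ and $\lan v_{\ul\mu(q)}, \prod_t p_{\ul\nu(t,q)}\ran$ is linear in its first argument, the scalars from these expansions factor out. Re-indexing the expansion coefficients as $\ul\lda'\in \PMap(T)$ and $\ul\mu'\in \PMap(Q)$ (so that $\ka$ in the $t$-th term becomes $\ul\lda'(t)$, and similarly for $q$), and noting that only those $\ul\lda'$ with $|\ul\lda'(t)|=|\ul\lda(t)|$ for each $t$ (respectively, $|\ul\mu'(q)|=|\ul\mu(q)|$) contribute nonzero terms (since $M$ is block-diagonal with blocks $M(\mtt u,\mtt u';w)$), I would obtain
\[
(A^{\wr}(\mtt u,\mtt v))_{\ul\lda\ul\mu}
= \sum_{\ul\lda',\ul\mu'} \left(\prod_t M_{\ul\lda(t),\ul\lda'(t)}\right) (A^{\wr}(\mtt u',\mtt v'))_{\ul\lda'\ul\mu'} \left(\prod_q (M^{-1})_{\ul\mu'(q),\ul\mu(q)}\right).
\]

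Finally, by the definition of $M^{\otimes T}$ and $(M^{-1})^{\otimes Q}$, the products $\prod_t M_{\ul\lda(t),\ul\lda'(t)}$ and $\prod_q (M^{-1})_{\ul\mu'(q),\ul\mu(q)}$ are precisely the entries $(M^{\otimes T})_{\ul\lda,\ul\lda'}$ and $((M^{-1})^{\otimes Q})_{\ul\mu',\ul\mu}$, so the right-hand side is the $(\ul\lda,\ul\mu)$-entry of $M^{\otimes T} A^{\wr}(\mtt u',\mtt v') (M^{-1})^{\otimes Q}$, yielding the claim. The only delicate point — really just bookkeeping — is tracking the indices through the double substitution and recognising the resulting products of matrix entries as tensor products; no genuinely hard step is involved.
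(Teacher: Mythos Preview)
Your proof is correct and follows essentially the same approach as the paper: derive $v_\mu = \sum_\nu (M^{-1})_{\nu\mu} v'_\nu$ from the two duality conditions, then substitute both expansions into the defining formula~\eqref{eq:defAwr} and collect terms into the tensor-product form. The paper's proof is terser---it simply states the relation $M(\mtt v,\mtt v') = (M^{\tr})^{-1}$ and then says ``substituting~\eqref{eq:trans} and this into~\eqref{eq:defAwr}, one obtains the result after a straightforward calculation''---while you have spelled out that calculation explicitly.
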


\begin{proof}
 Due to the duality conditions, we have $M(\mtt v,\mtt v') = (M^{\tr})^{-1}$. That is,
\begin{equation}\label{eq:conj1}
 v_{\lda} = \sum_{\mu\in\Par} (M^{-1})_{\mu\lda} v'_{\mu} \quad \text{ for all } \lda\in \Par.
\end{equation}
Substituting~\eqref{eq:trans} and~\eqref{eq:conj1} into~\eqref{eq:defAwr},
one obtains the result after a straightforward calculation. 
\end{proof}

\begin{remark}
 The remaining proofs of this section (except for those of Lemmas~\ref{lem:wrint} and~\ref{lem:idmat}) 
use essentially the same arguments as those presented in~\cite[Sections 3,4,6]{Hill2008} and~\cite[Section 3]{BessenrodtHill2010}, applied to a slightly more general situation. 
\end{remark}

Let $A$ be a $T\times Q$-matrix, where $T,Q$ are finite,  
and let $n\in \mZ_{\ge 0}$. Denote by $\lan T\ran$ 
a $\mQ$-vector space with
basis $T$. The $n$-th symmetric power $\Sym^n (\lan T\ran)$ has a basis that consists of the monomials $\prod_{t\in T} t^{i(t)}$ where $i$ runs through $I_n(T)$. 
It is easy to see that, with respect to this basis and the analogous basis
of $\Sym^n (\lan Q\ran)$, the matrix $\Sym^n (A)$ of the $n$-th symmetric power of the operator 
$A\colon \lan T\ran \to \lan Q \ran$ may be described as follows:
\begin{equation}\label{eq:sympower}
 \Sym^n (A)_{ij} = \sum_{f} \prod_{t\in T} \binom{i(t)}{ (f(t,q))_{q\in Q} } 
\prod_{t,q} A_{tq}^{f(t,q)} 
\end{equation}
 where the sum is over all $f\in I_n (T\times Q)$ 
such that $\sum_q f(t,q) = i(t)$ for all $t$ and 
$\sum_t f(t,q) = j(q)$ for all $q$. 
Here, $i\in I_n (T)$, $j\in I_n (Q)$ are arbitrary, and 
\[
 \binom{i(t)}{ (f(t,q))_{q\in Q} } = \frac{i(t)!}{\prod_{q\in Q} f(t,q)!}
\]
is the binomial coefficient.
Due to functoriality of symmetric powers, we have
\begin{equation}\label{eq:symmult}
 \Sym^n (AB) = \Sym^n (A) \Sym^n (B)
\end{equation}
whenever the product $AB$ of matrices is defined.

\begin{prop}\label{prop:mult}
Suppose that $(\mtt u, \mtt v)$ is a dual pair. 
 Let $A$ be 
a $T\times Q$-matrix and $B$ a $Q\times Z$-matrix. Then
\[
 (AB)^{\wr}(\mtt u,\mtt v) = A^{\wr} (\mtt u,\mtt v) B^{\wr}(\mtt u,\mtt v).
\]
\end{prop}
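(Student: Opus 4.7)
The plan is to prove the identity first for the specific dual pair $(\mtt p, \ti{\mtt p})$, and then transfer it to an arbitrary dual pair via Lemma~\ref{lem:conj}. For the first step, the strategy is to recognise $A^{\wr}(\mtt p, \ti{\mtt p})$ as a product of matrix entries of symmetric powers of $A$ and then invoke the functoriality~\eqref{eq:symmult}. Granting the identity for $(\mtt p, \ti{\mtt p})$, and writing $M = M(\mtt u, \mtt p)$ for any other dual pair $(\mtt u, \mtt v)$, three applications of Lemma~\ref{lem:conj} give
\begin{equation*}
A^{\wr}(\mtt u, \mtt v) B^{\wr}(\mtt u, \mtt v) = M^{\otimes T} A^{\wr}(\mtt p, \ti{\mtt p}) B^{\wr}(\mtt p, \ti{\mtt p}) (M^{-1})^{\otimes Z} = M^{\otimes T} (AB)^{\wr}(\mtt p, \ti{\mtt p}) (M^{-1})^{\otimes Z} = (AB)^{\wr}(\mtt u, \mtt v).
\end{equation*}

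To treat the pair $(\mtt p, \ti{\mtt p})$, in the definition of $A^{\wr}(\mtt p, \ti{\mtt p})_{\ul\lda \ul\mu}$ the products $\prod_q \ti{p}_{\ul\nu(t,q)}$ and $\prod_t p_{\ul\nu(t,q)}$ collapse, via $p_\lda p_\mu = p_{\lda + \mu}$ and $\ti p_\lda = z_\lda^{-1} p_\lda$, into scalar multiples of single power-sum basis vectors. The two scalar products then reduce to Kronecker deltas enforcing the compatibility conditions $\sum_q \ul\nu(t,q) = \ul\lda(t)$ and $\sum_t \ul\nu(t,q) = \ul\mu(q)$, leaving an overall coefficient $\prod_t z_{\ul\lda(t)} / \prod_{t,q} z_{\ul\nu(t,q)}$. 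I would then reparametrise $\ul\nu$ by the multiplicities $\rho_n(t,q) := m_n(\ul\nu(t,q))$ for each $n \in \mN$. Using $z_\lda = \prod_n n^{m_n(\lda)} m_n(\lda)!$ and $l(\ul\nu(t,q)) = \sum_n \rho_n(t,q)$, the formula factors over $n$; the powers of $n$ cancel between numerator and denominator because $\sum_t m_n(\ul\lda(t)) = \sum_{t,q} \rho_n(t,q)$, and what remains is a product of multinomial coefficients matching~\eqref{eq:sympower} exactly. This yields
\begin{equation*}
A^{\wr}(\mtt p, \ti{\mtt p})_{\ul\lda \ul\mu} = \prod_{n \geq 1} \Sym^{M_n}(A)_{i_n(\ul\lda), i_n(\ul\mu)},
\end{equation*}
where $i_n(\ul\kappa)(t) := m_n(\ul\kappa(t))$ and $M_n := |i_n(\ul\lda)|$, with the convention that the entry is zero unless $|i_n(\ul\lda)| = |i_n(\ul\mu)|$ for every $n$.

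To finish, I would expand $(A^{\wr}(\mtt p, \ti{\mtt p}) B^{\wr}(\mtt p, \ti{\mtt p}))_{\ul\lda \ul\mu}$ and observe that summing over $\ul\kappa \in \PMap(Q)$ is the same as summing independently, for each $n$, over $i_n(\ul\kappa) \in I_{M_n}(Q)$ (the constraint $\sum_n n M_n = |\ul\lda|$ is automatic). Hence the whole sum factors over $n$, and each factor equals $(\Sym^{M_n}(A)\Sym^{M_n}(B))_{i_n(\ul\lda), i_n(\ul\mu)}$, which by~\eqref{eq:symmult} is $\Sym^{M_n}(AB)_{i_n(\ul\lda), i_n(\ul\mu)}$. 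Multiplying over $n$ recovers $(AB)^{\wr}(\mtt p, \ti{\mtt p})_{\ul\lda \ul\mu}$, as required. The main obstacle is the bookkeeping that identifies the coefficient $\prod_t z_{\ul\lda(t)} / \prod_{t,q} z_{\ul\nu(t,q)}$ as a product of multinomial coefficients of the shape appearing in~\eqref{eq:sympower}; once this is carried out, everything else reduces to linear algebra and the functoriality of $\Sym^n$.
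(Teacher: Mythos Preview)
Your proposal is correct and follows essentially the same approach as the paper: first reduce the definition of $A^{\wr}(\mtt p,\ti{\mtt p})$ to the formula with coefficient $\prod_t z_{\ul\lda(t)}/\prod_{t,q} z_{\ul\nu(t,q)}$, reparametrise by multiplicities to exhibit a factorisation over part sizes $n$ into symmetric powers $\Sym^{M_n}(A)$, invoke the functoriality~\eqref{eq:symmult}, and finally transfer to an arbitrary dual pair via Lemma~\ref{lem:conj}. Your maps $i_n$ and integers $M_n$ are exactly the paper's $\ul\lda\mapsto\wh{\ul\lda}$ and $j(n)$, so the arguments coincide almost verbatim.
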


\begin{proof}
 We begin with the case when $\mtt u=\mtt p$ and $\mtt v=\ti{\mtt p}$. 
Note that, if $(\lda^i)_i$ is a tuple of partitions and $\al = \sum_i \lda^i$, then 
$
  p_{\al} = \prod_i p_{\lda^i}
$ (cf.~\cite[\S I.2]{Macdonald1995}). 
Also, recall that $\ti{p}_{\lda} = z^{-1}_{\lda} p_{\lda}$ for all $\lda$ and that $(\mtt p, \ti{\mtt p})$ is 
a dual pair. 
Using these facts and applying Definition~\ref{def:Awr}, we obtain
\begin{equation}\label{eq:mult1}
 A^{\wr} (\mtt p,\tilde{\mtt p})_{\ul\lda\ul\mu} = \sum_{\ul\nu} \left(
\prod_t z_{\ul\lda (t)} \cdot
\prod_{t,q}  z_{\ul\nu(t,q)}^{-1} A_{tq}^{l(\ul\nu(t,q))}  \right)
\end{equation}
for all $\ul\lda\in \PMap_w (T)$, $\ul\mu\in \PMap_w (Q)$, 
where the sum is over all $\ul\nu\in \PMap(T\times Q)$ such that $\sum_q \ul\nu(t,q) = \ul\lda(t)$ for all $t$ and $\sum_t \ul\nu(t,q) =\ul\mu(q)$ for all $q$. 

In particular, $A^{\wr}(\mtt p,\tilde{\mtt p}) =0$ unless 
$\sum_t \ul\lda(t) = \sum_q \ul\mu(q)$. 
So we have a block-diagonal decomposition of $A^{\wr} (\mtt p,\tilde{\mtt p})$, 
with blocks indexed by
maps $j\in I(\mN)$:
the block of $j$ is the intersection of the rows
indexed by the maps $\ul\lda\in \PMap(T)$ such that $\sum_{t} m_d(\ul\lda(t)) = j(d)$ 
for all $d\in \mN$ and 
the columns indexed by the maps 
$\ul\mu\in \PMap(Q)$ such that $\sum_q m_d(\ul\mu(q)) = j(d)$ for all $d$. 

If $E$ is any finite set and $\ul\al\in \PMap(E)$, define
$\wh{\ul\al} = (\ul{\wh\al}^d)_{d\in \mN} \in \prod_{d\in \mN} I(E)$
by $\ul{\wh\al}^d (e) = m_d (\ul\al(e))$ for all $d\in \mN$, $e\in E$ (cf.~\cite[Notation 3.2]{Hill2008}).
Fix $j\in I(\mN)$, and let $C^{(j)}$ be the corresponding block of $A^{\wr} (p,\tilde{p})$. 
The map $\ul\lda \mapsto \wh{\ul\lda}$ is a bijection from the set of rows of $C^{(j)}$ onto
$\prod_{d\in \mN} I_{j(d)} (T)$. Similarly, $\ul\mu \mapsto \wh{\ul\mu}$ is a bijection from the set
of columns of $C^{(j)}$ onto $\prod_{d\in \mN} I_{j(d)} (Q)$. 

Consider a row $\ul\lda$ and a column $\ul\mu$ of the block $j$.
 Let $\ul\nu\in \PMap (T\times Q)$, and write $i^{(d)} (t,q) = \ul{\wh\nu}^d (t,q)$ for all $d\in \mN$, $t\in T$, $q\in Q$. 
Observe that $\ul\nu$ satisfies 
 the conditions stated after Eq.~\eqref{eq:mult1} 
if and only if for each $d\in \mN$ we have 
$\sum_{q} i^{(d)} (t,q) = \ul{\wh\lda}^d (t)$ for all $t$ and $\sum_t i^{(d)} (t,q) = \ul{\wh\mu}^d (q)$
for all $q$. 
If these conditions are satisfied, then by~\eqref{eq:defz} we have
\[
 z_{\ul\lda(t)} \prod_q z_{\ul\nu(t,q)}^{-1} = 
\frac{\prod_{d\in \mN} m_d(\ul\lda(t))!}{\prod_{d\in\mN} \prod_q m_d (\ul\nu(q,t))!} =
\prod_{d\in \mN} \binom{\ul{\wh\lda}^d (t)}{(i^{(d)} (t,q))_{q\in Q} } \quad \text{ for all } t\in T.
\]
 Substituting this into~\eqref{eq:mult1}, we obtain 
\[
 C^{(j)}_{\ul\lda\ul\mu} = 
\prod_{d\in \mN} \left( \sum_{i^{(d)}} \prod_t \binom{\ul{\wh\lda}^d (t)}{(i^{(d)}(t,q))_{q\in Q} }
\prod_{t,q} A_{tq}^{i^{(d)} (t,q)} \right),
\]
where $i^{(d)}$ runs through the elements of $I_{j(d)} (T\times Q)$ 
satisfying the above conditions (for each $d\in \mN$).
Comparing this with~\eqref{eq:sympower}, we see that after the identifications 
$\ul\lda \mapsto \ul{\wh\lda}$ and $\ul\mu \mapsto \ul{\wh\mu}$ the block $C^{(j)}$ becomes equal to 
\[
\bigotimes_{d\in \mN} \Sym^{j(d)} (A).
\]
Due to~\eqref{eq:symmult}, we deduce that
\begin{equation}\label{eq:mult2}
 (AB)^{\wr} (\mtt p,\tilde{\mtt p}) = A^{\wr} (\mtt p,\tilde{\mtt p}) 
B^{\wr} (\mtt p,\tilde{\mtt p}). 
\end{equation}

Now consider the general case and let $M=M(\mtt u,\mtt p)$. Using Lemma~\ref{lem:conj} and Eq.~\eqref{eq:mult2}, we obtain 
\begin{align*}
 A^{\wr} (\mtt u,\mtt v) B^{\wr} (\mtt u,\mtt v) &= 
(M^{\otimes R} A^{\wr}(\mtt p,\ti{\mtt p}) (M^{-1})^{\otimes T}) 
(M^{\otimes T} B^{\wr} (\mtt p,\ti{\mtt p}) (M^{-1})^{\otimes Q}) \\
 &= M^{\otimes R} (AB)^{\wr} (\mtt p,\tilde{\mtt p}) (M^{-1})^{\otimes Q} 
 = (AB)^{\wr}(\mtt u,\mtt v). \qedhere
\end{align*} 
\end{proof}

\begin{lem}\label{lem:wrint}
 Suppose that $A$ is an integer $T\times Q$-matrix. Then the entries of $A^{\wr}(\mtt s, \mtt s)$ are integers. 
\end{lem}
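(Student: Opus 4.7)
The plan is to reduce to the character-theoretic setting provided by Lemma~\ref{lem:Awr}. If we can realise $A$ as $A = (\lan \phi(t), \psi(q) \ran)_{t,q}$ for suitable maps $\phi\colon T \to \cl C(G)$ and $\psi\colon Q \to \cl C(G)$ with $G$ a finite group, then Lemma~\ref{lem:Awr} yields
\[
 (A^{\wr w}(\mtt s, \mtt s))_{\ul\lda \ul\mu} = \lan \zeta^{(\phi)}_{\ul\lda}, \zeta^{(\psi)}_{\ul\mu} \ran,
\]
and provided both class functions on the right-hand side are virtual characters of $G \wr S_w$, their scalar product is automatically an integer. Since $A^{\wr}(\mtt s,\mtt s)$ is block-diagonal with blocks $A^{\wr w}(\mtt s,\mtt s)$ for $w\ge 0$, this will finish the proof.

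To produce such a realisation, I would pick any finite group $G$ with $|\Irr(G)| \ge |Q|$ (for instance, a cyclic group of sufficiently large order), fix an injection $\iota\colon Q \hookrightarrow \Irr(G)$, and set $\psi(q) = \iota(q)$ and $\phi(t) = \sum_{q \in Q} A_{tq}\,\iota(q)$. Since $A$ is integer-valued, $\phi(t) \in \cl C(G)$, and orthonormality of irreducible characters immediately gives $\lan \phi(t), \psi(q) \ran = A_{tq}$ as required.

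It remains to verify that $\zeta^{(\phi)}_{\ul\lda}, \zeta^{(\psi)}_{\ul\mu} \in \cl C(G \wr S_w)$. For the $\psi$-side this is routine: each $\psi(q)$ is an honest irreducible character, so $\psi(q)^{\witi \otimes w_q}$ is a character of $G \wr S_{w_q}$ by~\cite[Lemma 4.3.9]{JamesKerber1981}, its inner tensor product with the inflation of $s_{\ul\mu(q)}$ is a virtual character, and outer tensoring together with induction preserve this property. The only real subtlety lies on the $\phi$-side, where the operation $\phi \mapsto \phi^{\witi \otimes w}$ is \emph{not} linear in $\phi$, so one cannot simply reduce to the case of genuine characters by splitting $\phi(t) = \phi_+(t) - \phi_-(t)$. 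To circumvent this, I would invoke the expansion of $\phi(t)^{\witi \otimes w_t}$ already used in the proof of Lemma~\ref{lem:Zbasis} (coming from~\cite[Lemma 2.5]{Evseev2012}), which writes $\phi(t)^{\witi \otimes w_t}$ as a $\mZ$-linear combination of induced characters of the form $\Ind_{\prod_{\chi} G\wr S_{j(\chi)}}^{G\wr S_{w_t}} \bigotimes_{\chi \in \Irr(G)} \chi^{\witi \otimes j(\chi)}$, each of which is a virtual character. Then $\phi(t)^{\witi \otimes w_t}$ is a virtual character, and $\zeta^{(\phi)}_{\ul\lda}$ follows by the same construction used on the $\psi$-side. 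Handling this non-linearity is the single non-routine point of the argument.
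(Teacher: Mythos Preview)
Your proposal is correct and follows essentially the same route as the paper: realise $A$ as a Gram matrix of virtual characters on a finite group, apply Lemma~\ref{lem:Awr}, and conclude that the resulting scalar products are integers. The only cosmetic differences are that the paper swaps the roles of $T$ and $Q$ (taking $\phi\colon T\to\Irr(G)$ to be a bijection and defining $\psi(q)=\sum_t A_{tq}\phi(t)$) and, where you spell out the virtual-character verification via~\cite[Lemma~2.5]{Evseev2012}, the paper simply cites~\cite[Lemma~2.6]{Evseev2012} for the same conclusion.
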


\begin{proof}
 Let $G$ be the cyclic group of order $|T|$, and let $\phi\colon T\to \Irr(G)$ be an 
arbitrary bijection. For each $q\in Q$ set $\psi(q) = \sum_t A_{tq} \phi(q)$, so that
$A = (\lan \phi(t), \psi(q) \ran)_{q,t}$. By Lemma~\ref{lem:Awr}, the entries of $A^{\wr w}(\mtt s, \mtt s)$
are of the form $\lan \zeta_{\ul\lda}^{(\phi)}, \zeta_{\ul\mu}^{(\psi)} \ran$ where 
$\ul\lda \in \PMap_w(T)$ and $\ul\mu\in \PMap_w(Q)$. 
By~\cite[Lemma 2.6]{Evseev2012}, $\zeta_{\ul\lda}^{(\phi)}, \zeta_{\ul\mu}^{(\psi)}\in \cl C(G\wr S_w)$, so all entries of $A^{\wr w}(\mtt s, \mtt s)$ are integers. Since 
$A^{\wr}(\mtt s, \mtt s)$ is block-diagonal with blocks 
$A^{\wr w}(\mtt s, \mtt s)$, the result follows. 
\end{proof}

\begin{lem}\label{lem:idmat}
 We have $\mathbb I_T^{\wr} (\mtt s, \mtt s) = \mathbb I_{\PMap(T)}$.
\end{lem}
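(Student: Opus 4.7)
The plan is to evaluate the formula in Definition~\ref{def:Awr} directly for $A = \mathbb I_T$. Since $(\mathbb I_T)_{tq} = \de_{tq}$, I would first note that under the convention $0^0 = 1$ the factor $\prod_{t,q} A_{tq}^{l(\ul\nu(t,q))}$ in~\eqref{eq:defAwr} vanishes unless $\ul\nu(t,q)$ is the empty partition whenever $t \ne q$. So the indexing $\ul\nu \in \PMap(T \times T)$ collapses: each surviving $\ul\nu$ corresponds bijectively to a single map $\ul\nu' \colon T \to \Par$ given by $\ul\nu'(t) = \ul\nu(t,t)$.

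For such $\ul\nu$, using the trivial identities $p_\emptyset = \ti p_\emptyset = 1$, the products $\prod_q \ti p_{\ul\nu(t,q)}$ and $\prod_t p_{\ul\nu(t,q)}$ collapse to $\ti p_{\ul\nu'(t)}$ and $p_{\ul\nu'(q)}$ respectively, and the resulting sum factors as a product over $t \in T$:
\[
(\mathbb I_T^{\wr}(\mtt s, \mtt s))_{\ul\lda \ul\mu} = \prod_{t \in T} \sum_{\nu \in \Par} \lan s_{\ul\lda(t)}, \ti p_\nu \ran \lan s_{\ul\mu(t)}, p_\nu \ran.
\]
The final step is to recognise the inner sum as $\lan s_{\ul\lda(t)}, s_{\ul\mu(t)} \ran$: by duality of $(\mtt p, \ti{\mtt p})$ one has the expansion $s_{\ul\lda(t)} = \sum_\nu \lan s_{\ul\lda(t)}, \ti p_\nu \ran p_\nu$, and substituting into $\lan s_{\ul\lda(t)}, s_{\ul\mu(t)} \ran$ produces exactly the inner sum. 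By orthonormality of the family $\{ s_\lda \}$ this equals $\de_{\ul\lda(t), \ul\mu(t)}$, and multiplying over $t$ yields $\de_{\ul\lda \ul\mu}$.

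There is no real obstacle here: the argument is a short direct calculation from the definition, using only the $0^0 = 1$ convention, the triviality $\ti p_\emptyset = 1$, and the duality of $(\mtt p, \ti{\mtt p})$. As a sanity check one can also deduce the identity character-theoretically: pick any finite group $G$ with at least $|T|$ irreducible characters and an injection $\phi \colon T \hra \Irr(G)$, so that $(\lan \phi(t), \phi(q) \ran)_{t,q} = \mathbb I_T$; then Lemma~\ref{lem:Awr} together with~\eqref{eq:wrirr} identifies each entry $(\mathbb I_T^{\wr w}(\mtt s, \mtt s))_{\ul\lda \ul\mu}$ with the pairing of two distinct irreducible characters $\zeta_{\ul\lda}^{(\phi)}, \zeta_{\ul\mu}^{(\phi)}$ of $G \wr S_w$, which equals $\de_{\ul\lda \ul\mu}$.
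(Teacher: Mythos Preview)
Your proposal is correct. The primary argument you give --- directly evaluating Definition~\ref{def:Awr} for $A=\mathbb I_T$, collapsing the $\ul\nu$-sum to diagonal entries, and then recognising the inner sum as $\lan s_{\ul\lda(t)},s_{\ul\mu(t)}\ran$ via duality of $(\mtt p,\ti{\mtt p})$ --- is a genuinely different route from the paper's. The paper instead uses the character-theoretic interpretation: it chooses $G$ cyclic of order $|T|$ and a bijection $\phi\colon T\to \Irr(G)$, applies Lemma~\ref{lem:Awr}, and reads off the result from orthonormality of the irreducible characters $\zeta^{(\phi)}_{\ul\lda}$ of $G\wr S_w$ (cf.~\eqref{eq:wrirr}). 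In other words, the paper's proof is exactly what you wrote down as your ``sanity check''. Your direct computation has the advantage of being self-contained (it does not invoke Lemma~\ref{lem:Awr} or wreath-product representation theory), while the paper's version is shorter given that Lemma~\ref{lem:Awr} is already available.
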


\begin{proof}
 Let $G$ be the cyclic group of order $|T|$ and $\phi\colon T\to \Irr(G)$ a bijection. 
Let $w\in \mZ_{\ge 0}$.
As we observed above (see~\eqref{eq:wrirr}), the functions $\zeta_{\ul\lda}^{(\phi)}$, 
$\ul\lda\in\PMap_w (T)$, 
are distinct irreducible characters of $G\wr S_w$. 
Hence, by Lemma~\ref{lem:Awr}, 
$A^{\wr w}{(\mtt s, \mtt s)}_{\ul\lda\ul\mu} = \lan \zeta_{\ul\lda}^{(\phi)}, \zeta_{\ul\mu}^{(\phi)} \ran = 
\de_{\ul\lda\ul\mu}$ for all $\ul\lda,\ul\mu\in \PMap_w (T)$. 
\end{proof}

\begin{prop}\label{prop:wrequiv}
 If $A$ and $B$ are equivalent $T\times Q$-matrices, then $A^{\wr w}(\mtt s, \mtt s)$ and $B^{\wr w}(\mtt s, \mtt s)$ are equivalent.  
\end{prop}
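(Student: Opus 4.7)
My plan is to use Proposition~\ref{prop:mult} together with Lemmas~\ref{lem:wrint} and~\ref{lem:idmat} in a very direct way: since $A$ and $B$ are equivalent, write $B = UAV$ with $U\in \GL_T(\mZ)$ and $V\in \GL_Q(\mZ)$, and then show that the matrices $U^{\wr w}(\mtt s,\mtt s)$ and $V^{\wr w}(\mtt s,\mtt s)$ provide an equivalence between $A^{\wr w}(\mtt s,\mtt s)$ and $B^{\wr w}(\mtt s,\mtt s)$.

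The first step is to apply Proposition~\ref{prop:mult} to the dual pair $(\mtt s,\mtt s)$, which gives
\[
 B^{\wr}(\mtt s,\mtt s) \;=\; (UAV)^{\wr}(\mtt s,\mtt s) \;=\; U^{\wr}(\mtt s,\mtt s)\,A^{\wr}(\mtt s,\mtt s)\,V^{\wr}(\mtt s,\mtt s).
\]
Since $A^{\wr}(\mtt s,\mtt s)$, and similarly the other matrices appearing, are block-diagonal with blocks indexed by $w\ge 0$, restricting to the weight-$w$ block yields
\[
 B^{\wr w}(\mtt s,\mtt s) \;=\; U^{\wr w}(\mtt s,\mtt s)\,A^{\wr w}(\mtt s,\mtt s)\,V^{\wr w}(\mtt s,\mtt s).
\]

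Next I would verify that $U^{\wr w}(\mtt s,\mtt s) \in \GL_{\PMap_w(T)}(\mZ)$, and analogously for $V^{\wr w}(\mtt s,\mtt s)$. Integrality of $U^{\wr w}(\mtt s,\mtt s)$ and of $(U^{-1})^{\wr w}(\mtt s,\mtt s)$ follows from Lemma~\ref{lem:wrint}, since $U$ and $U^{-1}$ are both integer matrices. Invertibility then follows from Proposition~\ref{prop:mult} and Lemma~\ref{lem:idmat}:
\[
 U^{\wr w}(\mtt s,\mtt s)\,(U^{-1})^{\wr w}(\mtt s,\mtt s) \;=\; (UU^{-1})^{\wr w}(\mtt s,\mtt s) \;=\; \mathbb I_T^{\wr w}(\mtt s,\mtt s) \;=\; \mathbb I_{\PMap_w(T)},
\]
and similarly for the product in the opposite order, so $(U^{-1})^{\wr w}(\mtt s,\mtt s)$ is the integer inverse of $U^{\wr w}(\mtt s,\mtt s)$. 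The same argument applied to $V$ gives $V^{\wr w}(\mtt s,\mtt s)\in \GL_{\PMap_w(Q)}(\mZ)$, completing the equivalence.

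There is no significant obstacle: the real content sits in Proposition~\ref{prop:mult} (functoriality of the $\wr$-construction with respect to matrix multiplication), Lemma~\ref{lem:wrint} (integrality), and Lemma~\ref{lem:idmat} (the identity is preserved), all of which have already been established. The present proposition is the formal consequence of combining them.
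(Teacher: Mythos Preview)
Your proof is correct and essentially identical to the paper's own argument: both write $B=UAV$ (the paper uses $M,N$), invoke Lemma~\ref{lem:wrint} for integrality, Proposition~\ref{prop:mult} and Lemma~\ref{lem:idmat} to show $U^{\wr w}(\mtt s,\mtt s)$ and $V^{\wr w}(\mtt s,\mtt s)$ are invertible over $\mZ$, and then apply Proposition~\ref{prop:mult} once more to conclude.
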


\begin{proof}
 The hypothesis means that there are matrices $M\in\GL_T (\mZ)$ and $N\in \GL_Q(\mZ)$ 
such that $MAN=B$. The matrices $M^{\wr w}(\mtt s, \mtt s)$, $(M^{-1})^{\wr w}(\mtt s, \mtt s)$,
$N^{\wr w}(\mtt s, \mtt s)$ and $(N^{-1})^{\wr w}(\mtt s, \mtt s)$ are integer-valued by Lemma~\ref{lem:wrint}. 
By Proposition~\ref{prop:mult} and Lemma~\ref{lem:idmat}, 
\[
M^{\wr w}(\mtt s, \mtt s) (M^{-1})^{\wr w}(\mtt s, \mtt s) = (\mathbb I_T)^{\wr w}(\mtt s, \mtt s) = \mathbb I_{\PMap_w (T)}.
\]
Thus, $M^{\wr w}(\mtt s, \mtt s)\in \GL_{\PMap_w (T)} (\mZ)$. Similarly, 
$N^{\wr w} (\mtt s, \mtt s) \in \GL_{\PMap_w (Q)} (\mZ)$. 
By Proposition~\ref{prop:mult}, 
\[
M^{\wr w}(\mtt s, \mtt s) A^{\wr w}(\mtt s, \mtt s) N^{\wr w}(\mtt s, \mtt s) = B^{\wr w} (\mtt s, \mtt s),
\]
and the result follows.
\end{proof}

Let $\ell \in \mZ$. Applying Definition~\ref{def:Awr} to the $1\times 1$-matrix $(\ell)$, 
set $X^{(\mtt u,\mtt v)}_{\ell, w} = (\ell)^{\wr w} (\mtt u,\mtt v)$ 
for any graded bases $\mtt u$ and $\mtt v$ of $\Lda_{\mQ}$. 
That is, $X^{(\mtt u,\mtt v)}_{\ell,w}$ is the $\Par(w)\times \Par(w)$-matrix given by
\begin{equation}\label{eq:descX}
 (X^{(\mtt u,\mtt v)}_{\ell,w})_{\lda\mu} = \sum_{\nu\in \Par(w)} \lan u_{\lda}, \tilde{p}_{\nu} \ran
\lan v_{\mu}, p_{\nu} \ran \ell^{l(\nu)}. 
\end{equation}
In particular, 
\begin{equation}\label{eq:Xpp}
X^{(\mtt p,\tilde{\mtt p})}_{\ell, w} = \diag\{(\ell^{l(\lda)})_{\lda\in \Par(w)}\}.
\end{equation}
By Lemma~\ref{lem:conj}, 
\begin{equation}\label{eq:Xconj}
 X^{(\mtt s, \mtt s)}_{\ell,w} = 
M(\mtt s,\mtt p; w) X^{(\mtt p,\tilde{\mtt p})}_{\ell,w} M(\mtt s,\mtt p; w)^{-1}.
\end{equation}
Therefore, 
the determinant of $X^{(\mtt s, \mtt s)}_{\ell,w}$ is a power of $\ell$ (cf.~\cite[Section 6]{Hill2008}).

In Sections~\ref{sec:powred} and~\ref{sec:fin} we will prove the following key result.

\begin{thm}\label{thm:1t1p}
Let $p$ be a prime and $r\ge 0$. Then the elementary divisors of $X^{(\mtt s, \mtt s)}_{p^r,w}$ are 
\[
p^{c_{p,r}(\lda)}, \quad \lda\in \Par(w).
\]
\end{thm}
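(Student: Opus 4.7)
The plan has two stages, following the outline in Section~\ref{sec:intro}. From~\eqref{eq:Xpp} and~\eqref{eq:Xconj} one has
\[
X^{(\mtt s, \mtt s)}_{p^r, w} = M(\mtt s, \mtt p; w)\, \diag\{(p^r)^{l(\lda)}\}_{\lda \in \Par(w)}\, M(\mtt s, \mtt p; w)^{-1},
\]
so the rational diagonalisation is already explicit. The non-triviality of the Smith form over $\mZ$ comes from the denominators in $M(\mtt s, \mtt p; w)$, and the theorem asserts that the corrections are captured by the combinatorial invariant $c_{p,r}(\lda)$ from~\eqref{eq:cpr}.

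In the first stage I would reduce the problem to a matrix $Y$ whose rows and columns are indexed only by partitions all of whose parts are powers of $p$. The main tool is Brauer's characterisation of characters: a rational class function on $S_w$ is a virtual character if and only if its restriction to every Brauer $p$-elementary subgroup is a virtual character. Such subgroups of $S_w$ are, up to conjugacy, contained in Young subgroups $S_u \times S_v$, with $S_u$ supporting the cyclic (prime-to-$p$) component and $S_v$ supporting the $p$-subgroup part. Combining this with the multiplicativity established in Proposition~\ref{prop:mult}, I would produce integral unimodular equivalences that separate the effect of parts coprime to $p$ (which should contribute only trivial Smith factors) from the effect of $p$-power parts, leaving behind the matrix $Y$. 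The expectation is that $X^{(\mtt s, \mtt s)}_{p^r, w}$ becomes $\GL(\mZ)$-equivalent to a block matrix whose non-identity block is $Y$, the identity block absorbing the partitions with $c_{p,r}(\lda) = 0$ (namely those all of whose parts are divisible by $p^r$).

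The second stage is to compute the Smith normal form of $Y$ combinatorially. Choosing an appropriate total order on partitions with $p$-power parts (for example, lexicographic on the multiplicities $(m_{p^k}(\lda))_{k\ge 0}$ read from the largest $k$ down), I would triangularise $Y$ and evaluate its diagonal entries using the formula~\eqref{eq:descX}. For $\lda = (p^{a_1},\dots,p^{a_t})$, the surviving contributions come from cycle types $\nu$ of the same shape as $\lda$; combining the binomial identities of~\eqref{eq:sympower} with Legendre's formula $d_p(n) = v_p(n!)$, the $p$-adic valuation of the diagonal entry at $\lda$ should evaluate to
\[
\sum_{k=0}^{r-1}\Bigl((r-k)\,m_{p^k}(\lda) + d_p(m_{p^k}(\lda))\Bigr) = c_{p,r}(\lda),
\]
the $(r-k) m_{p^k}(\lda)$-term tracking the factor $(p^r)^{l(\nu)}$ and the $d_p$-correction arising from the symmetrisation over positions indexed by equal parts.

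I expect the first stage to be the principal obstacle. The reduction to $p$-power parts demands \emph{integral} row and column operations, whereas the natural transition $M(\mtt s, \mtt p; w)$ is only $\mQ$-valued. The delicate point will be to show that all ``mixed'' partitions (those combining parts coprime to $p$ with parts of $p$-power order) can be cleared away unimodularly; this should follow by applying Brauer's characterisation along the Young-subgroup decomposition, combined with an induction on $w$, though the book-keeping will be substantial. Once this reduction is in place, the triangularisation of $Y$ in the second stage should be comparatively routine.
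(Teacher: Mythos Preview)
Your two-stage outline and the use of Brauer's characterisation in stage one are correct and match the paper's strategy. However, your picture of the output of stage one is inaccurate: the reduction does not produce a single block $Y$ plus an identity block. Rather, one obtains (over $\mZ_{(p)}$) a block-diagonal matrix with one block for each $p$-class regular partition $\nu$ of $w$, the block for $\nu$ being, after a natural bijection, the tensor product $\bigotimes_{j \in \mN_{p'}} Y^{(m_j(\nu))}$, where $Y^{(u)} = N^{(u)} b^{(u)} (N^{(u)})^{-1}$ is a $\Pow(u)\times\Pow(u)$-matrix. Since $c_{p,r}$ is additive with respect to this decomposition, the theorem reduces to Theorem~\ref{thm:pow} about $Y^{(w)}$; the invocation of Proposition~\ref{prop:mult} is a red herring here.

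The more serious gap is in stage two. Triangularising $Y$ and reading off diagonal entries does not determine the Smith normal form: the diagonal of a triangular matrix constrains only the product of the invariant factors, not the individual ones. Indeed $Y = NbN^{-1}$ is already lower-triangular with diagonal entries $p^{rl(\lda)}$, not $p^{c_{p,r}(\lda)}$, so the redistribution of $p$-powers among the rows is the entire content of the theorem. The paper's Section~\ref{sec:fin} is substantially harder than you anticipate. It first passes to the symmetric form $Y' = Nbz^{-1}N^{\tr}$ (Lemma~\ref{lem:Nequiv}), introduces a second block decomposition of $\Pow(w)$ according to $\bar\lda$ (obtained by splitting all parts $\ge p^r$ into parts of size exactly $p^r$), factors $N=AC$ with $C$ block-diagonal, and rewrites the product as $Y'' = x^{<r} U\, b^{<r} (x^{<r} y^{<r})^{-1}\, V\, x^{<r}$ with $U,V$ unipotent in a precise $p$-adic sense. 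The heart of the argument is then Lemma~\ref{lem:U}\eqref{T4}, a delicate combinatorial estimate $v_p(U_{\lda\mu}) > k_{\lda}-k_{\mu}$ on the off-diagonal entries, fed into a general criterion (Lemma~\ref{lem:invfac}) for when a product $sPtQu$ has the same invariant factors as $stu$. Your assessment that stage one is the principal obstacle is inverted: it is the off-diagonal control in stage two that carries the proof.
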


Here, $c_{p,r} (\lda)$ are the integers defined by~\eqref{eq:cpr}.

\begin{remark}\label{rem:Hill}
 In~\cite[\S VI.10]{Macdonald1995} Macdonald defined a bilinear form 
$\lan \cdot, \cdot \ran_{\ell}$ on $\CF(S_w)$ (for each $\ell\in \mN$) by setting 
$\lan p_{\lda}, p_{\mu} \ran = \de_{\lda\mu} \ell^{l(\lda)} z_{\lda}$ for all $\lda,\mu\in \Par(w)$. By~\eqref{eq:Xpp} and~\eqref{eq:Xconj}, 
the invariant factors of this bilinear form restricted to
$\cl C(S_w)$ are the same as the invariant factors of $X^{(\mtt s,\mtt s)}_{\ell, w}$
(as $\{ s_{\lda} \}_{\lda\in \Par(w)}$ is a $\mZ$-basis of $\cl C(S_w)$).
Theorem~\ref{thm:1t1p}, stated in terms of the form $\lan \cdot, \cdot \ran_{p^r}$,
was proved by Hill for $r\le p$ and conjectured to hold in general: 
see~\cite[Theorem 1.3]{Hill2008}.
Our proof uses a different approach to that of Hill. In fact, the arguments of Section~\ref{sec:fin} become much simpler if $r$ is large (more precisely, if $p^r>w$):
see Remark~\ref{rem:simplecase}. 
\end{remark}

Recall that, for $\lda\in \Par$, the integer $\vth_{\lda} (\ell)$ is defined by~\eqref{eq:vth} for $\ell>0$, 
and set $\vth_{\lda} (0) =0$. 

\begin{cor}\label{cor:1t1}
 Let $\ell\in \mZ_{\ge 0}$. Then $X_{\ell,w}^{(\mtt s, \mtt s)}$ is equivalent to
 $\diag\{ (\vth_{\ell} (\lda))_{\lda\in \Par(w)} \}$. 
\end{cor}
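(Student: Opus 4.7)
The plan is to derive Corollary~\ref{cor:1t1} from Theorem~\ref{thm:1t1p} by a standard local-to-global argument, combined with the multiplicativity of Proposition~\ref{prop:mult}. The cases $\ell\le 1$ are straightforward (for $\ell=1$, \eqref{eq:Xpp} and \eqref{eq:Xconj} give $X^{(\mtt s,\mtt s)}_{1,w}=\mathbb I$, and for $\ell=0$ the matrix vanishes whenever $w\ge 1$ by~\eqref{eq:descX}), so I concentrate on $\ell\ge 2$.

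Write $\ell=\prod_{i=1}^k p_i^{r_i}$. Viewing $(\ell)$ as a $1\times 1$-matrix, we have $(\ell)=(p_1^{r_1})\cdots (p_k^{r_k})$, so Proposition~\ref{prop:mult}, applied to the dual pair $(\mtt s,\mtt s)$, yields
\[
 X^{(\mtt s,\mtt s)}_{\ell,w}\;=\; X^{(\mtt s,\mtt s)}_{p_1^{r_1},w}\cdots X^{(\mtt s,\mtt s)}_{p_k^{r_k},w}.
\]
By Theorem~\ref{thm:1t1p}, each factor $X^{(\mtt s,\mtt s)}_{p_i^{r_i},w}$ is equivalent over $\mZ$ to the diagonal matrix $D_i:=\diag\{(p_i^{c_{p_i,r_i}(\lda)})_{\lda\in\Par(w)}\}$, whose determinant is a power of $p_i$. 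By~\eqref{eq:Xpp} and~\eqref{eq:Xconj}, $\det X^{(\mtt s,\mtt s)}_{\ell,w}$ is a power of $\ell$, so the elementary divisors of $X^{(\mtt s,\mtt s)}_{\ell,w}$ involve only the primes $p_1,\ldots,p_k$.

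Next, I localise at each prime $p=p_i$. For every $j\ne i$, the matrix $X^{(\mtt s,\mtt s)}_{p_j^{r_j},w}$ has determinant a power of $p_j$ and thus lies in $\GL_{\Par(w)}(\mZ_{(p)})$. Hence $X^{(\mtt s,\mtt s)}_{\ell,w}$ is equivalent over $\mZ_{(p)}$ to $X^{(\mtt s,\mtt s)}_{p^{r_i},w}$, and therefore to $D_i$. Consequently, the multiset of $p$-adic valuations of the elementary divisors of $X^{(\mtt s,\mtt s)}_{\ell,w}$ is $\{c_{p,r_i}(\lda):\lda\in\Par(w)\}$, which by~\eqref{eq:vth} coincides with $\{v_p(\vth_\lda(\ell)):\lda\in\Par(w)\}$. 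Assembling this information over all $p=p_i$ via the Chinese remainder theorem shows that the Smith normal form of $X^{(\mtt s,\mtt s)}_{\ell,w}$ has diagonal multiset $\{\vth_\lda(\ell):\lda\in\Par(w)\}$, proving the stated equivalence.

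There is no substantive obstacle beyond Theorem~\ref{thm:1t1p} itself: the multiplicativity identity above is supplied by Proposition~\ref{prop:mult}, and the prime-by-prime reconstruction of invariant factors is a routine consequence of the structure theorem for finitely generated $\mZ$-modules.
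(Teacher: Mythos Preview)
Your proof is correct and follows essentially the same route as the paper's: factor $\ell$ into prime powers, express $X^{(\mtt s,\mtt s)}_{\ell,w}$ as the product of the $X^{(\mtt s,\mtt s)}_{p_i^{r_i},w}$, apply Theorem~\ref{thm:1t1p} to each factor, and reassemble the invariant factors via the Chinese Remainder Theorem. The only cosmetic difference is that you obtain the product decomposition from Proposition~\ref{prop:mult}, whereas the paper observes it more directly from~\eqref{eq:Xpp} and~\eqref{eq:Xconj} (conjugating a diagonal matrix that is visibly multiplicative in $\ell$); both are valid and yield the same identity.
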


\begin{proof} 
The result is clear for $\ell =0$, so assume that $\ell >0$. 
 Let $\ell =\prod_i p_i^{r_i}$ be the prime factorisation of $\ell$. 
Due to~\eqref{eq:Xpp} and~\eqref{eq:Xconj}, we have
$X^{(\mtt s, \mtt s)}_{\ell,w} = \prod_i X^{(\mtt s, \mtt s)}_{p_i^{r_i},w}$, where the product may be taken in any order. 
The result now follows from Theorem~\ref{thm:1t1p} and the Chinese Remainder 
Theorem: see~\cite[Section 6]{Hill2008} for details. 
\end{proof}

\begin{cor}\label{cor:Awrinv}
 Suppose that a $T\times T$-matrix $A$ is equivalent to $\diag\{(a_t)_{t\in T} \}$ for some 
$a_t\in \mZ_{\ge 0}$.
Then $A^{\wr w}(\mtt s, \mtt s)$ is equivalent to the diagonal matrix with diagonal entries
\[
  \prod_{t\in T} \vth_{\ul\lda(t)}(a_t), \quad \ul\lda \in \PMap_w (T).
\]
\end{cor}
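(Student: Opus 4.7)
\medskip

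The plan is to reduce to the case of diagonal $A$ and then observe that $D^{\wr w}(\mtt s,\mtt s)$, for $D$ diagonal, breaks up as a block-diagonal matrix whose blocks are tensor products of the matrices $X^{(\mtt s,\mtt s)}_{a_t,w_t}$ handled by Corollary~\ref{cor:1t1}.

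First, by Proposition~\ref{prop:wrequiv}, the equivalence $A\sim D:=\diag\{(a_t)_{t\in T}\}$ implies that $A^{\wr w}(\mtt s,\mtt s)$ is equivalent over $\mZ$ to $D^{\wr w}(\mtt s,\mtt s)$, so it suffices to analyse the latter. Inspecting Definition~\ref{def:Awr}, the factor $\prod_{t,q}D_{tq}^{l(\ul\nu(t,q))}$ vanishes unless $\ul\nu(t,q)$ is the empty partition whenever $t\ne q$ (since $D_{tq}=0$ in that case and $l(\ul\nu(t,q))\ge 1$ if $\ul\nu(t,q)$ is nonempty). Writing $\nu_t:=\ul\nu(t,t)$, and recalling that $\tilde p_\emptyset=p_\emptyset=1$, the entry of $D^{\wr w}(\mtt s,\mtt s)$ at $(\ul\lda,\ul\mu)$ becomes
\[
\sum_{(\nu_t)_{t\in T}} \prod_{t\in T}\Bigl(\lan s_{\ul\lda(t)},\tilde p_{\nu_t}\ran\,\lan s_{\ul\mu(t)},p_{\nu_t}\ran\,a_t^{l(\nu_t)}\Bigr).
\]
Because the inner products force $|\nu_t|=|\ul\lda(t)|=|\ul\mu(t)|$, the entry is zero unless $|\ul\lda(t)|=|\ul\mu(t)|$ for every $t\in T$, and when this holds it equals $\prod_{t\in T}\bigl(X^{(\mtt s,\mtt s)}_{a_t,|\ul\lda(t)|}\bigr)_{\ul\lda(t),\ul\mu(t)}$ by the description~\eqref{eq:descX}.

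Second, after permuting rows and columns by the bijection $\ul\lda\mapsto(\ul\lda(t))_{t\in T}$, this shows that $D^{\wr w}(\mtt s,\mtt s)$ is equivalent to the block-diagonal matrix with one block for each tuple $\mbf{w}=(w_t)_{t\in T}$ of nonnegative integers summing to $w$, the block indexed by $\mbf{w}$ being the tensor product
\[
\bigotimes_{t\in T} X^{(\mtt s,\mtt s)}_{a_t,w_t},
\]
acting on $\bigotimes_{t\in T}\mZ^{\Par(w_t)}$. By Corollary~\ref{cor:1t1}, each tensor factor is equivalent over $\mZ$ to $\diag\{(\vth_{\mu}(a_t))_{\mu\in\Par(w_t)}\}$. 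Tensoring the invertible integer row/column operations that realise these equivalences produces invertible integer operations realising an equivalence of the block to $\diag\bigl\{\prod_t \vth_{\mu_t}(a_t)\bigm|(\mu_t)\in\prod_t\Par(w_t)\bigr\}$.

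Finally, collecting the diagonal matrices across all compositions $\mbf{w}$ of $w$, and using the bijection $\ul\lda\leftrightarrow(\ul\lda(t))_{t\in T}$ once more, we recover exactly the diagonal matrix with entries $\prod_{t\in T}\vth_{\ul\lda(t)}(a_t)$ for $\ul\lda$ running over $\PMap_w(T)$. There is no essential obstacle beyond bookkeeping: the only substantive inputs are the vanishing of off-diagonal contributions in the sum over $\ul\nu$ (which makes $D^{\wr w}$ manifestly a tensor product) and the stability of equivalence under tensor products over $\mZ$.
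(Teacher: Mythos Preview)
Your proof is correct and follows essentially the same route as the paper's: reduce to diagonal $A$ via Proposition~\ref{prop:wrequiv}, observe that the defining sum~\eqref{eq:defAwr} collapses so that $D^{\wr w}(\mtt s,\mtt s)$ is block-diagonal with blocks $\bigotimes_{t}X^{(\mtt s,\mtt s)}_{a_t,w_t}$, and then invoke Corollary~\ref{cor:1t1} together with the compatibility of invariant factors with tensor products. The paper's version is only slightly more terse; your extra sentence spelling out why the tensor-product equivalence works is fine.
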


\begin{proof}
 Due to Proposition~\ref{prop:wrequiv}, we may assume that $A=\diag\{ (a_t)_{t\in T} \}$. 
As $A_{tq}=0$ whenever $t\ne q$, Eq.~\eqref{eq:defAwr} becomes
\begin{equation}\label{eq:Awrinv1}
 A^{\wr} (\mtt s, \mtt s)_{\ul\lda\ul\mu} = \sum_{\ul\nu\in\Par} \prod_t
\left( \lan s_{\ul\lda(t)}, \tilde{p}_{\ul\nu(t)} \ran
\lan s_{\ul\mu(t)}, p_{\ul\nu(t)} \ran  a_t^{l(\ul\nu(t))} \right). 
\end{equation}
In particular, $A^{\wr}(\mtt s, \mtt s)$ is block-diagonal with blocks indexed by the maps
$j\in I_w (T)$, where a row or column indexed by $\ul\lda$ intersects the block of $j$ if 
and only if $|\ul\lda(t)| = j(t)$ for all $t$. Comparing~\eqref{eq:Awrinv1} with~\eqref{eq:descX},
we see that the block indexed by $j$ is exactly
\[
 \bigotimes_{t\in T} X_{a_t, j(t)}^{(\mtt s, \mtt s)}.
\]
The result now follows from Corollary~\ref{cor:1t1}, as invariant factors are well-behaved 
with respect to tensor products of matrices. 
\end{proof}

Theorem~\ref{thm:BH} may be deduced as follows. 
Consider 
$A=(\lan \be_i, \be_j \ran)_{0\le i,j\le \ell-2}$, a Cartan matrix of the principal $\ell$-block
of $S_\ell$ (see~\eqref{eq:CartA}), so that $A$ has invariant factors $\ell, 1,\ldots,1$. 
By Proposition~\ref{prop:isored} and Lemma~\ref{lem:Awr}, the matrix 
$\Cart_{\ell}(S_{\ell w+e}, \rho)$ is equivalent to
$A^{\wr w} (\mtt s,\mtt s)$. 
Note that $\vth_{\lda} (1) =1$ for all $\lda\in \Par$. 
Hence, by Corollary~\ref{cor:Awrinv}, the  
matrix $A^{\wr w}(\mtt s,\mtt s)$ is equivalent to 
\[
\diag\{ (\vth_{\ul\lda(0)} (\ell))_{\ul\lda\in \PMap_w ([0,\ell-2])} \}.
\]
Now for each $\lda\in \Par$ with $|\lda|\le w$, the number of maps 
$\ul\lda \in \PMap_w ([0,\ell-2])$ such that $\ul\lda(0)=\lda$ is equal to 
$|\PMap_{w-|\lda|} ([1,\ell -2])| = k(\ell -2, w-|\lda|)$. 
So $A^{\wr} (\mtt s,\mtt s)$ is equivalent to the diagonal matrix described in 
Theorem~\ref{thm:BH}. 
Thus, it remains only to prove Theorem~\ref{thm:1t1p}.

\section{Reduction to $p$-power partitions}\label{sec:powred}

From now on, we fix a prime $p$ and $r\in \mZ_{\ge 0}$. 
Also, we adopt the convention that diagonal matrices are denoted by lower-case letters. 
If $x=(x_{tq})_{t,q}$ is a diagonal matrix, we will write $x_{t}$ for $x_{tt}$. 
Let $w\ge 0$.
Define the diagonal $\Par(w)\times \Par(w)$-matrix $a=a^{(w)}$ by $a_{\lda} = p^{r l(\lda)}$, 
so that $a = X^{(\mtt p, \ti{\mtt p})}_{p^r,w}$ by~\eqref{eq:Xpp}. 

Let $M=M^{(w)}$ be the transition matrix $M(\mtt h,\ti{\mtt p}; w)$ and $X' = MaM^{-1}$. 
Due to~\eqref{eq:Xconj}, 
\[
X' = M(\mtt h,\mtt s; w) X^{(\mtt s,\mtt s)}_{p^r,w} M(\mtt h,\mtt s; w)^{-1}.
\]
It is well known that
$M(\mtt h,\mtt s; w)\in\GL_{\Par (w)} (\mZ)$ (see~\cite[Eq.\ 2.3.7]{JamesKerber1981}), so $X'$ is equivalent to $X^{(\tt s,\tt s)}_{p^r,w}$. 
(In fact, it is the matrix $X'$ rather than $X^{(\mtt s,\mtt s)}_{p^r,w}$ that is considered in~\cite{Hill2008}.) 

Let $\lda,\mu\in \Par(w)$. Define $\scr M_{\lda\mu}$ to be the set of all maps 
$f\colon [1, l(\mu)] \to [1,l(\lda)]$ such that $\sum_{j\in f^{-1} (i)} \mu_j =\lda_i$ for all
$i\in [1,l(\lda)]$. Since $h_{\lda}$ is the permutation character corresponding to the Young subgroup $\prod_i S_{\lda_i}$, we obtain
\begin{equation}\label{eq:M}
 M_{\lda\mu} = |\scr M_{\lda\mu}| \quad \text{ for all } \lda,\mu\in \Par(w)
\end{equation}
after applying the definition of induced character 
(alternatively, see~\cite[Statement I.6.9)]{Macdonald1995}).


As usual, let $\mZ_{(p)} = \{a/b \mid a,b\in \mZ, \, p\nmid b \}$, a subring of $\mQ$.  
Let $\ol{\mQ}$ be the algebraic closure of $\mQ$ and $\ol{\mZ}_{(p)}$ be the integral closure
of $\mZ_{(p)}$ in $\ol{\mQ}$.
For any finite group $G$, let $\CF(G; \ol{\mQ})$ be the abelian group of $\ol{\mQ}$-valued class functions
on $G$, and define the following subgroups of $\CF(G; \ol{\mQ})$:
$\cl C_{(p)}(G) = \mZ_{(p)}[\Irr(G)]$ and
$\ol{\cl C}_{(p)} (G) = \ol{\mZ}_{(p)} [\Irr(G)]$. 

\begin{remark}\label{rem:Zp}
We may work over $\mZ_{(p)}$ rather than $\mZ$ when proving Theorem~\ref{thm:1t1p}. 
Indeed, since $\det(X')$ is a power of $p$, any diagonal matrix with $p$-power diagonal entries which is equivalent to $X'$ over $\mZ_{(p)}$ must be equivalent to $X'$ (and hence to $X$) over $\mZ$. 
Thus, we may replace $M$ in the formula $X'=MaM^{-1}$ by any matrix $L$ which is 
row equivalent to $M$ over $\mZ_{(p)}$, that is, such that the rows of $L$ span
$\cl C_{(p)} (S_w)$ (in the sense that is made precise below). 
In this section we will use Brauer's characterisation of characters to find 
an especially nice matrix $L$ that satisfies this property; in particular, $L$ 
is block-diagonal with respect to a certain partition
of the set $\Par(w)$.   
This will considerably simplify the problem. 
\end{remark} 

Let $G$ be a finite group and $h\in G$ be a $p'$-element (that is, an element of order prime to $p$). 
Define a class function $\chi_{G,h} = \chi_{h}\in \CF(G; \ol{\mQ})$ as follows: 
\[
 \chi_h (g)= 
\begin{cases}
 1 & \text{if } g_{p'} \text{ is } G\text{-conjugate to } h, \\
 0 & \text{otherwise.}
\end{cases}
\]
Here, as usual, $g_{p'}$ denotes the $p'$-part of $g$ (that is,  $g_{p'}$ is a $p'$-element and 
 $g = g_p g_{p'} = g_{p'}g_{p}$ for some $p$-element $g_{p}\in G$).
Further, if $P$ is a subgroup of $C_G (h)$, define a map $\Res^{G,h}_P\colon \CF(G;\ol{\mQ}) \to \CF(P; \ol{\mQ})$ by
\[
 \Res^{G,h}_P (\xi)(x) = \xi(hx) \quad \text{ for all } \xi\in \CF(G, \ol{\mQ}), \, x\in P. 
\]

The following standard lemma, which is a consequence of Brauer's characterisation of characters, is key to the arguments of this section. 
(It will be used only in the case $G=S_w$.) 

\begin{lem}\label{lem:Br1}
\begin{enumerate}[(i)]
 \item\label{it:1}    $\chi_h\in \ol{\cl C}_{(p)} (G)$;
 \item\label{it:2}    
Let $\xi\in \CF (G; \ol{\mQ} )$. 
Suppose that for each $p'$-element $h\in G$ we have $\Res^{G,h}_P \xi \in \ol{\cl C}_{(p)} (P)$ where $P$ is a Sylow $p$-subgroup of $C_G (h)$.
Then $\xi\in \ol{\cl C}_{(p)} (G)$.
\end{enumerate}
\end{lem}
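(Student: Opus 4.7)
The plan is to deduce both assertions from the $p$-local form of Brauer's characterisation of characters, which I would phrase as: a class function $\psi\in\CF(G;\ol\mQ)$ lies in $\ol{\cl C}_{(p)}(G)$ if and only if $\psi|_E\in\ol{\cl C}_{(p)}(E)$ for every $p$-elementary subgroup $E=P_0\times C_0$ of $G$, where $P_0$ is a $p$-group and $C_0$ is cyclic of order prime to $p$. The irreducible characters of such an $E$ factor as exterior tensor products $\chi\otimes\psi$ with $\chi\in\Irr(P_0)$ and $\psi\in\Irr(C_0)$, so verification on each $E$ reduces to an explicit Fourier-type calculation on $C_0$.

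For part~(i), fix a $p$-elementary subgroup $E=P_0\times C_0$. Since $(pc)_{p'}=c$ for $p\in P_0$ and $c\in C_0$, the restriction $\chi_h|_E(pc)$ depends only on $c$ and equals $\mathbf{1}_{P_0}(p)\cdot f(c)$, where $f(c)\in\{0,1\}$ records whether $c$ is $G$-conjugate to $h$. Expanding the integer-valued $f\in\CF(C_0)$ in the basis $\Irr(C_0)$ by the usual inversion formula yields coefficients of the shape $|C_0|^{-1}\sum_c f(c)\ol{\psi(c)}$; these lie in $\ol\mZ_{(p)}$ because $|C_0|$ is coprime to $p$ and the sum is a $\mZ$-combination of roots of unity. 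Hence $\chi_h|_E\in\ol{\cl C}_{(p)}(E)$, and (i) follows from the local Brauer criterion.

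For part~(ii), again fix a $p$-elementary $E=P_0\times C_0$. For each $c\in C_0$, the element $c$ is a $p'$-element and $P_0\le C_G(c)$, so $P_0$ is contained in some Sylow $p$-subgroup $P_c$ of $C_G(c)$. The hypothesis is invariant under $G$-conjugation and Sylow $p$-subgroups of $C_G(c)$ are mutually conjugate, so it gives $\Res^{G,c}_{P_c}\xi\in\ol{\cl C}_{(p)}(P_c)$; restricting to $P_0$ then yields $\xi_c:=\Res^{G,c}_{P_0}\xi\in\ol{\cl C}_{(p)}(P_0)$. Writing $\xi_c=\sum_{\chi\in\Irr(P_0)} a_{c,\chi}\chi$ with $a_{c,\chi}\in\ol\mZ_{(p)}$ and treating $c\mapsto a_{c,\chi}$ as an $\ol\mZ_{(p)}$-valued class function on $C_0$ for each fixed $\chi$, the Fourier argument of part~(i) expands it as a $\ol\mZ_{(p)}$-combination of $\Irr(C_0)$. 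Assembling, $\xi|_E=\sum_{\chi,\psi} b_{\chi,\psi}\,\chi\otimes\psi$ with $b_{\chi,\psi}\in\ol\mZ_{(p)}$, so $\xi|_E\in\ol{\cl C}_{(p)}(E)$ and the criterion gives $\xi\in\ol{\cl C}_{(p)}(G)$.

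I do not expect a genuine obstacle: the lemma is essentially a bookkeeping exercise once the $p$-local Brauer characterisation is granted. The two minor points that require care are invoking the $p$-local rather than the integral form of Brauer's theorem, and passing in~(ii) from the Sylow $p$-subgroup $P_c$ of $C_G(c)$ supplied by the hypothesis to the arbitrary $p$-subgroup $P_0$ actually needed — both harmless because $\ol{\cl C}_{(p)}$ is preserved under restriction.
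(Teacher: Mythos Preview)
Your proposal is correct and follows essentially the same route as the paper: reduce via Brauer's characterisation to elementary subgroups $E=Q\times P$ (with $P$ a $p$-group and $Q$ a $p'$-group), then decompose $\xi|_E$ along the $p'$-factor and apply the hypothesis on the $p$-factor. The paper's identity $\Res^G_E(\xi)=\sum_{q\in\cl Q}(\chi_{Q,q}\otimes\Res^{G,q}_P\xi)$ is exactly your Fourier expansion written invariantly, with part~(i) supplying $\chi_{Q,q}\in\ol{\cl C}_{(p)}(Q)$ in place of your explicit inversion on a cyclic $C_0$. The only cosmetic differences are that the paper cites Isaacs' Lemma~8.19 for~(i) rather than computing directly, and uses all elementary subgroups rather than only $p$-elementary ones; your choice of the $p$-local criterion makes the $p'$-factor cyclic and the Fourier step more concrete, but nothing of substance changes.
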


\begin{proof}
 Part (i) follows immediately from~\cite[Lemma 8.19]{IsaacsBook}. 

We note that the hypothesis of part (ii) for a given $p'$-element $h$ does not depend of the choice of $P$ because all Sylow $p$-subgroups of $C_G(h)$ are conjugate. 
By Brauer's characterisation of characters (see~\cite[Theorem 8.4]{IsaacsBook}), the conclusion of~\eqref{it:2} will follow once we show that
$\Res^{G}_{E} (\xi) \in \ol{\cl C}_{(p)} (E)$ for all elementary subgroups $E$ of $G$. 
For every such $E$ we have $E=Q\times P$ where $P$ is a $p$-group and $Q$ is a $p'$-group. Let $\cl Q$ be a set of representatives of the conjugacy classes of $Q$.
Then
\[
 \Res^G_E (\xi) = \sum_{q\in \cl Q} (\chi_{Q,q} \otimes \Res^{G,q}_P (\xi) ).
\]
By the hypothesis, $\Res^{G,q}_P (\xi) \in \ol{\cl C}_{(p)} (P)$ for all $q\in Q$. Also, $\chi_{Q,q}\in \ol{\cl C}_{(p)} (Q)$ by~\eqref{it:1}. 
The result follows.
\end{proof}


\begin{defi}
Let $T$ be a finite set.
Let $R$ be an integral domain with field of fractions $K$. 
Denote by $K^T$ the vector space of row vectors $v=(v_t)_{t\in T}$ with $v_t\in K$. 
If $Q$ is a subset of $T$, define $\pi_Q \colon K^T \to K^T$ by 
\[
 \pi_Q (v)_t = \begin{cases}
                v_t & \text{if } t\in Q, \\
		0  & \text{if } t\notin Q. 
               \end{cases}
\qquad \text{(for } t\in T\text{)}
\]
Let $T = \sqcup_i T_i$ be a set partition of $T$ and
$A$ be a finite $T\times T$-matrix with entries in $K$. 
Let $V \subset K^T$ be the row space of $A$ over $R$. 
We say that $A$ \emph{splits} over $R$ with respect to the given set partition of $T$ if
$\pi_{T_i} (V)\subset V$ for all $i$.
\end{defi}

We use this definition in the case $T=\Par(w)$ as follows. Let $\mN_{p'}$ be the set of all natural numbers that are prime to $p$. 
Denote by $\Par'(w)$ the set of all partitions $\nu\in \Par(w)$ such that $\nu_i \in \mN_{p'}$ for all $i$
(such $\nu$ are called \emph{$p$-class regular} in~\cite{KuelshammerOlssonRobinson2003}). 
Let $\nu\in \Par'(w)$. Recall that $g_{\nu}\in S_{w}$ is a fixed element of cycle type $\nu$.
For each $\nu\in\Par'(w)$ 
define 
$\Par(w, \nu)$ to be the set of all $\lda\in \Par(w)$ such that
$\sum_{n\ge 0} m_{jp^n} (\lda) p^n = m_j (\nu)$ for all $j\in \mN_{p'}$. 
This leads to the set partition
\begin{equation}\label{eq:parpart}
 \Par(w) = \bigsqcup_{\nu\in\Par'(w)} \Par(w, \nu). 
\end{equation}
Note that an element $g\in S_w$ has cycle type belonging to $\Par (w,\nu)$ if and only if
$g_{p'}$ has cycle type $\nu$. We will identify $\mQ^{\Par(w)}$ with $\CF(S_w)$ via 
\begin{equation}\label{eq:ident}
  v\mapsto \sum_{\lda\in\Par(w)} v_{\lda} \ti{p}_{\lda}. 
\end{equation}
With this identification, $\cl C(S_w)$ is the row space of the character table 
$M(\mtt s, \ti{\mtt p})$.
The row space of $M = M(\mtt h, \ti{\mtt p}; w)$ also equals $\cl C(S_w)$ since 
$M(\mtt h, \mtt s; w)\in\GL_{\Par(w)} (\mZ)$. 

Let $\xi\in \cl C(S_w)$ be the character corresponding to a row of $M$. Then, for every $\nu\in \Par'(w)$, the class function
$\chi_{g_{\nu}} \cdot \xi$ corresponds to $\pi_{\Par(w,\nu)} (\xi)$. 
However, by Lemma~\ref{lem:Br1}\eqref{it:1}, we have $\chi_{g_{\nu}} \cdot \xi \in \ol{\cl C}_{(p)}(S_w)$.
This means that $M$ splits over $\ol{\mZ}_{(p)}$ with respect to the set partition~\eqref{eq:parpart}. 
Since $M$ is rational-valued, we deduce the following more precise result using standard ring theory. 

\begin{prop}\label{prop:Msplit}
 The matrix $M$ splits over $\mZ_{(p)}$ with respect to the set partition 
$\Par(w) = \bigsqcup_{\nu\in\Par'(w)} \Par(w, \nu)$. 
\end{prop}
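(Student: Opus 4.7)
The plan is to leverage the splitting already proved over $\ol{\mZ}_{(p)}$ together with rationality of $M$ in order to descend to $\mZ_{(p)}$, using the standard fact that $\mZ_{(p)}$ is integrally closed in its field of fractions~$\mQ$.

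First I would recast the desired splitting in class-function terms. Under the identification~\eqref{eq:ident}, the row space of $M$ over $\mZ_{(p)}$ is exactly $\cl C_{(p)}(S_w)$: indeed $M(\mtt h, \mtt s; w)\in \GL_{\Par(w)}(\mZ)$, and $\Irr(S_w)$ is a $\mZ_{(p)}$-basis of $\cl C_{(p)}(S_w)$. Hence the statement to prove reduces to: for every $\xi \in \cl C(S_w)$ (i.e.\ every row of $M$) and every $\nu\in \Par'(w)$, one has $\chi_{g_\nu}\cdot \xi \in \cl C_{(p)}(S_w)$, since $\pi_{\Par(w,\nu)}(\xi)$ corresponds to $\chi_{g_\nu}\cdot \xi$.

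Next I would combine two observations. On the one hand, $\ol{\cl C}_{(p)}(S_w)$ is closed under inner tensor products, because it is the $\ol{\mZ}_{(p)}$-span of characters and the inner tensor product of two characters is again a character; together with $\chi_{g_\nu}\in \ol{\cl C}_{(p)}(S_w)$ from Lemma~\ref{lem:Br1}\eqref{it:1} this gives $\chi_{g_\nu}\cdot \xi \in \ol{\cl C}_{(p)}(S_w)$. On the other hand, the product $\chi_{g_\nu}\cdot \xi$ is rational-valued: $\chi_{g_\nu}$ takes only values in $\{0,1\}$, and $\xi$ is rational-valued as a $\mZ$-linear combination of the functions $\ti{p}_\mu$. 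Expanding $\chi_{g_\nu}\cdot \xi = \sum_{\lda\in \Par(w)} c_\lda s_\lda$ with $c_\lda \in \ol{\mZ}_{(p)}$, each coefficient satisfies $c_\lda = \lan \chi_{g_\nu}\cdot \xi, s_\lda\ran \in \mQ$, since all $s_\lda$ are rational-valued. Therefore $c_\lda \in \ol{\mZ}_{(p)}\cap \mQ = \mZ_{(p)}$, so $\chi_{g_\nu}\cdot\xi \in \cl C_{(p)}(S_w)$, proving the proposition.

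The argument is routine; the only substantive step is the integral closedness identity $\ol{\mZ}_{(p)}\cap \mQ = \mZ_{(p)}$, which is the ``standard ring theory'' alluded to in the remark preceding the proposition and is precisely the mechanism that promotes the splitting from $\ol{\mZ}_{(p)}$ down to $\mZ_{(p)}$.
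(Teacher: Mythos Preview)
Your proof is correct and follows exactly the paper's approach: the paragraph immediately preceding the proposition establishes splitting over $\ol{\mZ}_{(p)}$ via Lemma~\ref{lem:Br1}\eqref{it:1} and then invokes ``standard ring theory'' together with rationality of $M$ to descend to $\mZ_{(p)}$. You have simply made that final step explicit, using $\ol{\mZ}_{(p)}\cap\mQ=\mZ_{(p)}$ and the rationality of the coefficients $c_\lda=\lan\chi_{g_\nu}\cdot\xi,s_\lda\ran$.
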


We will use the following general result on split matrices.

\begin{lem}\label{lem:tr}
Let $R$ be an integral domain with field of fractions $K$. Suppose that $T=\sqcup_i T_i$,
where $T$ is a finite set. Let $A$ be a $T\times T$-matrix with entries in $K$ that 
splits over $R$ with respect to this set partition. Suppose that $A$ is lower-triangular 
with respect to some total order on $T$ and that $A_{tt}\ne 0$ for all $t\in T$.
Define the $T\times T$-matrix $\witi{A}$ by
\[
 \witi{A}_{tq} = 
\begin{cases}
  A_{tq} & \text{if } t,q\in T_i \text{ for some } i, \\
 0 & \text{otherwise}.
\end{cases}
\qquad \qquad (t,q\in T)
\]
Then $A$ is row equivalent to $\witi{A}$ over $R$. 
\end{lem}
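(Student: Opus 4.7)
The plan is to construct $U \in \GL_T(R)$ explicitly so that $UA = \widetilde A$. For each $t \in T$, let $i(t)$ denote the index with $t \in T_{i(t)}$, and write $A_t$ for the $t$-th row of $A$. By the splitting hypothesis, $\pi_{T_{i(t)}}(A_t)$ lies in the $R$-row-space $V$ of $A$, so there exist $u_{tq} \in R$ (for $q \in T$) with
\[
\pi_{T_{i(t)}}(A_t) = \sum_{q \in T} u_{tq} A_q.
\]
Because $A$ is lower-triangular with all $A_{tt} \neq 0$, its rows are $K$-linearly (hence $R$-linearly) independent, and so the $u_{tq}$ are uniquely determined. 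Setting $U = (u_{tq})_{t,q \in T}$, the equality $UA = \widetilde A$ holds by construction, since the $t$-th row of $UA$ is $\pi_{T_{i(t)}}(A_t)$, which is precisely the $t$-th row of $\widetilde A$.

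The remaining step is to show $U \in \GL_T(R)$. My plan is to verify that $U$ is lower-triangular with respect to the same total order used for $A$ and has $u_{tt} = 1$ on the diagonal, from which $U \in \GL_T(R)$ follows immediately. Suppose, for contradiction, that $u_{tq} \neq 0$ for some $q$ strictly greater than $t$, and choose the maximal such $q^*$. Reading off the coordinate at position $q^*$ in the displayed equation: the left-hand side vanishes there because $A_t$ is supported on indices $\leq t$ and $q^* > t$; the right-hand side reduces to $u_{t q^*} A_{q^* q^*}$, since terms with $q > q^*$ vanish by maximality of $q^*$ and terms with $q < q^*$ vanish by lower-triangularity of $A$. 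As $A_{q^* q^*} \neq 0$, this forces $u_{t q^*} = 0$, a contradiction. Once the upper-triangular entries of $U$ are known to vanish, comparing coordinates at position $t$ yields $A_{tt} = u_{tt} A_{tt}$, so $u_{tt} = 1$.

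The conceptual content of the argument is the observation that the splitting condition, combined with the lower-triangular shape of $A$, is exactly what forces the coefficient matrix $U$ realising the passage from $A$ to $\widetilde A$ to lie in $\GL_T(R)$ rather than merely in $\GL_T(K)$. I do not anticipate any serious obstacle here: uniqueness of the $u_{tq}$ and their triangularity both follow mechanically from lower-triangularity of $A$, and everything else is a direct comparison of coordinates.
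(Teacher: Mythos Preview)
Your proof is correct and takes essentially the same approach as the paper: both use the splitting hypothesis to write the relevant projection of each row as an $R$-combination of rows of $A$, and then use lower-triangularity with nonzero diagonal to force the coefficients of later rows to vanish. The only difference is presentational---the paper subtracts off the off-block projections $\pi_{T_j}(A_t)$ (for $j\ne i(t)$) iteratively, while you directly build the matrix $U$ from the on-block projections and verify it is unipotent lower-triangular; these are two ways of packaging the same computation.
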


\begin{proof}
Let $<$ be the given total order on $T$, and
write $A_t$ for row $t$ of $A$. It is enough to prove the following: if $t\in T_i$ and $j\ne i$, then
\begin{equation}\label{eq:pit}
\pi_{T_j} (A_t) = \sum_{\substack{u\in T \\ u<t}} \al_u A_u \quad \text{ for some coefficients } \al_u \in R. 
\end{equation}
Indeed, applying~\eqref{eq:pit} repeatedly, one easily obtains $\witi{A}$ from $A$ by elementary row operations defined over $R$. 
To prove~\eqref{eq:pit}, note that, as $A$ splits, we have $\pi_{T_j} (A_t) = \sum_{u\in T} \al_u A_u$ for some $\al_u\in R$. Since $A$ is lower-triangular with non-zero diagonal entries, it follows that $\al_u=0$ for all $u\ge t$, so~\eqref{eq:pit} holds.
\end{proof}

Let $\witi{M}$ be the block-diagonal ``truncation'' of $M$ 
defined as in the statement of Lemma~\ref{lem:tr}
with respect to the set partition $\Par(w) = \bigsqcup_{\nu\in\Par'(w)} \Par(w,\nu)$. 
It is well known (and easy to see from the definition) that $M$ is lower-triangular with
respect to the lexicographic order on $\Par(w)$ and that the diagonal entries of $M$ are non-zero. 
Hence, applying Proposition~\ref{prop:Msplit} and Lemma~\ref{lem:tr}, we obtain the following result.

\begin{lem}\label{lem:Mti}
 The matrices $M$ and $\witi{M}$ are row equivalent over $\mZ_{(p)}$. 
\end{lem}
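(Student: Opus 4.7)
The plan is to invoke Lemma~\ref{lem:tr} directly, with $R=\mZ_{(p)}$, $T=\Par(w)$, the set partition $T=\bigsqcup_{\nu\in \Par'(w)} \Par(w,\nu)$, and $A=M$. The conclusion of that lemma then says exactly that $M$ is row equivalent to $\witi{M}$ over $\mZ_{(p)}$, which is what we want.

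To apply Lemma~\ref{lem:tr} we need three hypotheses: that $M$ splits over $\mZ_{(p)}$ with respect to the given partition, that $M$ is lower-triangular with respect to some total order on $\Par(w)$, and that its diagonal entries are nonzero. The splitting hypothesis is precisely Proposition~\ref{prop:Msplit}, so the first condition is free. For the triangularity, I would use the combinatorial description~\eqref{eq:M}: $M_{\lda\mu} = |\scr M_{\lda\mu}|$, the number of maps $f\colon [1,l(\mu)]\to [1,l(\lda)]$ such that $\sum_{j\in f^{-1}(i)} \mu_j = \lda_i$. Take the lexicographic order on $\Par(w)$. If $\mu > \lda$ lexicographically, any such $f$ would have to use all parts of $\mu$ to sum to the parts of $\lda$, but the largest part of $\mu$ already exceeds the largest part of $\lda$, so $\scr M_{\lda\mu}=\emptyset$ and $M_{\lda\mu}=0$; this is the standard fact that $\mtt h$ and $\mtt s$ (equivalently $\ti{\mtt p}$, modulo the integral transition) are related by a lower-unitriangular change of basis. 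Finally, $M_{\lda\lda}\ge 1$ because the identity map on $[1,l(\lda)]$ lies in $\scr M_{\lda\lda}$, so the diagonal entries are nonzero (in fact, positive integers).

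With the three hypotheses verified, Lemma~\ref{lem:tr} yields the row equivalence of $M$ and $\witi{M}$ over $\mZ_{(p)}$. There is no real obstacle here; the lemma is a straightforward packaging of earlier work. The only point that requires any attention is to make sure one chooses a total order on $\Par(w)$ refining the partial order with respect to which $M$ is triangular, so that Lemma~\ref{lem:tr} applies as stated; the lexicographic order suffices and is what is referenced in the lemma's proof.
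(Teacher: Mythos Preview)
Your proposal is correct and follows essentially the same route as the paper: both invoke Proposition~\ref{prop:Msplit} for the splitting hypothesis, note that $M$ is lower-triangular with respect to the lexicographic order with nonzero diagonal, and then apply Lemma~\ref{lem:tr}. Your justification of triangularity is slightly imprecise (lexicographic inequality $\mu>\lda$ need not force $\mu_1>\lda_1$), but the underlying fact is standard---the existence of $f\in\scr M_{\lda\mu}$ forces $\lda$ to dominate $\mu$, hence $\lda\ge\mu$ lexicographically---and the paper itself simply cites this as well known.
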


Define $\Pow$ to be the set of all partitions $\lda =(\lda_1,\ldots,\lda_d)$
such that all parts $\lda_i$ are integer powers of $p$. (This includes $1=p^0$.)
 Write $\Pow(w) = \Pow \cap \Par(w)$. 
Define $N=N^{(w)}$ to be the $\Pow(w)\times \Pow(w)$-submatrix of $M^{(w)}$ 

Let 
$\bar{M}=\bar{M}^{(w)}$ be the $\Par(w)\times \Pow(w)$-submatrix of $M^{(w)}$. 
The following result is an immediate consequence of Lemma~\ref{lem:Mti}, due to
the block-diagonal structure of $\witi{M}$ (note that $\Pow(w) = \Par(w, (1^w) )$). 

\begin{lem}\label{lem:Mol}
 The row spaces of $\bar{M}^{(w)}$ and $N^{(w)}$ over $\mZ_{(p)}$ are the same. 
\end{lem}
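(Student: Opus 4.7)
The plan is to deduce this lemma directly from Lemma~\ref{lem:Mti} together with the block-diagonal shape of $\witi{M}$ and the observation that $\Pow(w)$ is one of the blocks of the partition~\eqref{eq:parpart}. There is no real obstacle: the content of the lemma is just an ``honest'' projection statement once Lemma~\ref{lem:Mti} is in hand.

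First, I would invoke Lemma~\ref{lem:Mti} to fix a matrix $U\in \GL_{\Par(w)}(\mZ_{(p)})$ with $UM=\witi{M}$. Restricting both sides to the columns indexed by $\Pow(w)$ gives $U\bar{M}=\bar{\witi{M}}$, where $\bar{\witi{M}}$ denotes the $\Par(w)\times \Pow(w)$-submatrix of $\witi{M}$. Since $U$ is invertible over $\mZ_{(p)}$, the row spaces of $\bar{M}$ and $\bar{\witi{M}}$ over $\mZ_{(p)}$ coincide.

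Next I would identify $\bar{\witi{M}}$ with $N$ ``extended by zero rows''. Indeed, $\Pow(w)=\Par(w,(1^w))$, which is one of the parts in the decomposition $\Par(w)=\sqcup_{\nu\in\Par'(w)}\Par(w,\nu)$. By the construction of $\witi{M}$ in Lemma~\ref{lem:tr}, the entry $\witi{M}_{\lda\mu}$ vanishes whenever $\lda$ and $\mu$ belong to different parts of this decomposition. Consequently, for $\mu\in\Pow(w)$ we have $\witi{M}_{\lda\mu}=N_{\lda\mu}$ if $\lda\in\Pow(w)$ and $\witi{M}_{\lda\mu}=0$ otherwise.

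Finally, from this description, the row space of $\bar{\witi{M}}$ over $\mZ_{(p)}$ is exactly the row space of $N$, since the rows indexed by $\lda\notin\Pow(w)$ contribute nothing. Combining with the previous paragraph yields the equality of the row spaces of $\bar{M}$ and $N$ over $\mZ_{(p)}$, which is the claim.
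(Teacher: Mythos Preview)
Your proof is correct and is exactly the argument the paper intends: the statement is presented there as an immediate consequence of Lemma~\ref{lem:Mti} together with the block-diagonal structure of $\witi{M}$ and the identification $\Pow(w)=\Par(w,(1^w))$. Your write-up just spells out these steps explicitly.
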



Define a map $\iota\colon \Par \to \prod_{j\in \mN_{p'}} \Pow$,   
$\;\lda \mapsto (\lda^j)_{j\in \mN_{p'}}$,  by the identity 
$m_{p^n} (\lda^j) = m_{jp^n} (\lda)$
 for all $j\in \mN_{p'}$, $n\ge 0$.
Let $w\ge 0$ and $\nu\in \Par'(w)$. Then $\iota$ restricts to a bijection
from $\Par (w,\nu)$ onto $\prod_{j\in \mN_{p'}} \Pow(m_j (\nu))$, also denoted by $\iota$. 
Let 
\[
 L(\nu) = \bigotimes_{j\in \mN_{p'}} N^{(m_j (\nu))},  
\]
so that $L(\nu)$ is a square matrix with rows and columns indexed by 
$\prod_{j} \Pow(m_j (\nu))$. 
Define a $\Par(w)\times \Par(w)$-matrix $L$ 
by
\begin{equation}\label{eq:defL}
 L_{\lda\mu} = 
\begin{cases}
  L(\nu)_{\iota(\lda),\, \iota(\mu)} & 
\text{if } \lda,\mu\in \Par(w,\nu) \text{ for some } \nu\in \Par'(w), \\
0 & \text{otherwise,}
\end{cases}
\end{equation}
so that $L$ is block-diagonal with respect to the set partition~\eqref{eq:parpart}.

\begin{lem}\label{lem:Ntensor}
 The matrices $M$ and $L$ are  row equivalent over $\mZ_{(p)}$. 
\end{lem}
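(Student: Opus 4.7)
The strategy is to reduce the statement to a blockwise comparison of row spaces and then identify each via Brauer's characterisation of characters. By Lemma~\ref{lem:Mti}, it suffices to show that $\witi{M}$ and $L$ are row equivalent over $\mZ_{(p)}$. Both matrices are block-diagonal with respect to $\Par(w)=\bigsqcup_{\nu\in\Par'(w)}\Par(w,\nu)$, so it is enough to fix $\nu\in\Par'(w)$ and show that the $\nu$-block $B^{\nu}\coloneqq M|_{\Par(w,\nu)\times\Par(w,\nu)}$ of $\witi{M}$ is row equivalent over $\mZ_{(p)}$ to $L(\nu)=\bigotimes_{j} N^{(m_j(\nu))}$ after identifying rows and columns via $\iota$.

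I would compare the row spaces of $B^{\nu}$ and $L(\nu)$ inside $\mQ^{\Par(w,\nu)}$, which I identify with $V_{\nu}\coloneqq\{\xi\in\CF(S_w;\mQ)\mid \xi(g)=0\text{ whenever the }p'\text{-part of }g\text{ is not conjugate to }g_{\nu}\}$ via $v\mapsto\sum_{\lda}v_{\lda}\ti{p}_{\lda}$. Using Lemma~\ref{lem:Mti} combined with the projection argument from the proof of Lemma~\ref{lem:Mol}, the row space of $B^{\nu}$ over $\mZ_{(p)}$ equals $\cl C_{(p)}(S_w)\cap V_{\nu}$. On the other hand, by Lemma~\ref{lem:Mol} the row space of each $N^{(m_j(\nu))}$ equals that of $\bar{M}^{(m_j(\nu))}$, which, by the same projection argument applied to the special case $\nu=(1^{m_j(\nu)})$, is $\cl C_{(p)}(S_{m_j(\nu)})\cap V_{(1^{m_j(\nu)})}$. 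Hence the row space of $L(\nu)$ over $\mZ_{(p)}$ is the $\mZ_{(p)}$-tensor product $\bigotimes_{j}\bigl[\cl C_{(p)}(S_{m_j(\nu)})\cap V_{(1^{m_j(\nu)})}\bigr]$.

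To identify these two spaces I would invoke Brauer's characterisation (Lemma~\ref{lem:Br1}\eqref{it:2}) with $G=S_w$ and $h=g_{\nu}$. Let $P_{\nu}$ be a Sylow $p$-subgroup of $C_{S_w}(g_{\nu})=\prod_{j}(C_j\wr S_{m_j(\nu)})$; since each $C_j$ has order prime to $p$, we have $P_{\nu}=\prod_{j} P_{m_j(\nu)}$ where $P_m$ is a Sylow $p$-subgroup of $S_m$. The map $\Res^{S_w,g_{\nu}}_{P_{\nu}}$ then sends $\cl C_{(p)}(S_w)\cap V_{\nu}$ bijectively onto the elements of $\cl C_{(p)}(P_{\nu})$ that are constant on the $C_{S_w}(g_{\nu})$-orbits. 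Because conjugation by the base $C_j^{m_j(\nu)}$ preserves the cycle type of every $p$-element of $C_j\wr S_{m_j(\nu)}$, the $C_{S_w}(g_{\nu})$-orbits on $P_{\nu}$ coincide with the products of the $S_{m_j(\nu)}$-orbits on the $P_{m_j(\nu)}$, yielding a tensor decomposition that matches the row space of $L(\nu)$ after applying the analogous Brauer isomorphism to each factor. The main technical obstacle is verifying that this identification is compatible with $\iota$, which reduces to the combinatorial observation that for $x=(x_j)_j\in P_{\nu}$, the cycle type of $g_{\nu}x$ in $S_w$ is $\iota^{-1}\bigl((\lda^j)_j\bigr)$, where $\lda^j$ denotes the cycle type of $x_j$; this holds because $p^n$ disjoint $j$-cycles permuted cyclically by a $p^n$-cycle combine into a single $jp^n$-cycle whenever $\gcd(j,p)=1$.
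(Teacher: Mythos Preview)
Your approach is sound and leads to a correct proof, but it differs substantially from the paper's argument and leaves several non-trivial steps unjustified.

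The paper does not attempt to characterise the two row spaces intrinsically and then match them. Instead it shows only \emph{one} inclusion: for each row $\xi_{\gamma}$ of $L$ it verifies directly, via Lemma~\ref{lem:Br1}\eqref{it:2} and the same cycle-type computation for $g_{\nu}x$ that you use, that $\xi_{\gamma}\in\cl C_{(p)}(S_w)$; then it observes $\det M=\det L$ (both matrices are lower-triangular with explicitly computable diagonals), which forces row equivalence. This is shorter and avoids the two-sided analysis.

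Your route is more conceptual but you have asserted two facts that need real work. First, the statement that $\Res^{S_w,g_{\nu}}_{P_{\nu}}$ maps $\cl C_{(p)}(S_w)\cap V_{\nu}$ \emph{onto} the $C_{S_w}(g_{\nu})$-invariant part of $\cl C_{(p)}(P_{\nu})$ requires both directions: surjectivity uses Lemma~\ref{lem:Br1}\eqref{it:2} exactly as the paper does, while the forward inclusion (that the image lands in $\cl C_{(p)}(P_{\nu})$ rather than merely $\ol{\cl C}_{(p)}(P_{\nu})$) needs a Galois-descent argument, since characters of $P_{\nu}$ are not rational-valued. You should observe that $C$-invariance forces $\psi(x)=\psi(x^{k})$ for all $k$ prime to $p$, so $\psi$ is rational-valued, and then that a rational-valued element of $\ol{\cl C}_{(p)}(P_{\nu})$ automatically lies in $\cl C_{(p)}(P_{\nu})$ (write it in terms of Galois-orbit sums of irreducible characters). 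Second, the tensor decomposition of the $C$-invariant sublattice is not automatic: you need that each $\{S_{m_j(\nu)}$-invariants in $\cl C_{(p)}(P_{m_j(\nu)})\}$ is \emph{pure} in $\cl C_{(p)}(P_{m_j(\nu)})$, so that the tensor product of these sublattices is again pure, hence coincides with the $C$-invariants in $\bigotimes_j\cl C_{(p)}(P_{m_j(\nu)})$.

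Finally, your justification of the orbit comparison via ``conjugation by the base group preserves cycle type'' is slightly off: conjugating an element of the complement $G_j\simeq S_{m_j(\nu)}$ by the base group does not in general stay in $G_j$. The clean statement is that for $x,y$ in the complement, $C_j\wr S_{m_j(\nu)}$-conjugacy is equivalent to $S_{m_j(\nu)}$-conjugacy (project to the quotient). Combined with your cycle-type computation for $g_{\nu}x$, this yields the orbit identification.
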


\begin{proof}
Let $\g\in \Par(w)$ and consider the class function $\xi_{\g}\in \CF(S_w)$ corresponding to row $\g$ of $L$ (via the identification~\eqref{eq:ident}). 
 For each $\eta\in\Par'(w)$ let $P_{\eta}$ be a Sylow $p$-subgroup of $C_{S_w} (g_{\eta})$.
We will verify that $\xi_{\g}$ satisfies the hypothesis of Lemma~\ref{lem:Br1}\eqref{it:2},
 i.e.\ that
\begin{equation}\label{eq:Br1_for_xig}
 \Res^{S_w, g_{\eta}}_{P_{\eta}} \xi_{\g} \in \ol{\cl C}_{(p)} (P_{\eta}) \quad \text{ for all } \eta.
\end{equation}
First, it follows from~\eqref{eq:defL} that $\Res^{S_w, g_{\eta}}_{P_{\eta}} \xi_{\g} =0$ if $\g\notin \Par'(w,\eta)$.

Let $\nu\in \Par'(w)$ be such that $\g\in \Par(w,\nu)$. 
Consider an arbitrary partition $\mu\in \Par(w,\nu)$. Write 
$\iota(\g) = (\g^j)_{j\in \mN_{p'}}$ and $\iota(\mu)=(\mu^j)_{j\in \mN_{p'}}$. 
We have 
\begin{equation}\label{eq:cent}
C_{S_w} ({g_{\nu}}) = \prod_{j\in \mN_{p'}} (C_{j} \wr S_{m_j (\nu)}).
\end{equation}
For each $j$, 
let $G_j$ denote the usual complement to the base subgroup in the wreath product $C_j \wr S_{m_j (\nu)}$, so that $G\simeq S_{m_j (\nu)}$; we view $G_j$
as a subgroup of $S_w$. 
Let $P_j$ be a Sylow $p$-subgroup of $G_j$. Since the index of $\prod_j G_j$ in $C_{S_w} (g_{\nu})$ is prime to $p$, we may assume that 
$P_{\nu}=\prod_j P_j$.
Consider an element $x\in P$ defined by $x= \prod_j x_j$ where $x_j\in P_j$, viewed as an element of $S_{m_j (\nu)}$, has cycle type $\mu^j$.
Let $g_{\nu} = \prod_j g_{\nu, j}$ be the decomposition of $g_{\nu}$ with respect to the direct product~\eqref{eq:cent}, so that $g_{\nu, j}$ has cycle type $(j^{\star m_j (\nu)})$ as an element of $S_w$.
Since $p\nmid j$ and $x_j$ is a $p$-element, it is easy to see that the cycle type of $g_{\nu, j} x_j\in C_j \wr S_{m_j (w)}$ as an element of $S_w$ is obtained
from $\mu^j$ by multiplying each part by $j$. It follows that $\prod_j (g_{\nu,j} x_j)\in S_w$ has cycle type $\mu$. 
Therefore, by~\eqref{eq:defL}, 
\[
\xi_{\g} (g_{\mu}) = \prod_{j\in \mN_{p'}} h_{\g^j} (x_j),
\] 
where $h_{\g^j}$ is viewed as a character of $G_j$ thanks to the identification of that group with $S_{m_j (\nu)}$.
It follows that 
\[
 \Res^{S_w, g_{\nu}}_{P_{\nu}} \xi_{\g} = \bigotimes_{j\in \mN_{p'}} \Res^{G_j}_{P_j} h_{\g^j} \in \cl C (P_{\nu}).
\]
Thus,~\eqref{eq:Br1_for_xig} holds in all cases. 
By Lemma~\ref{lem:Br1}\eqref{it:2}, we have $\xi_{\g} \in \ol{\cl C}_{(p)} (S_w)$.
That is, 
row $\g$ of $L$ belongs to the row space of $M$ over $\ol{\mZ}_{(p)}$. 
Since both $L$ and $M$ are rational-valued, the same holds over $\mZ_{(p)}$.
So the row space of $L$ over $\mZ_{(p)}$ is contained in that of $M$.

However, it is easy to see that $\det(M)=\det(L)$. Indeed, one obtains explicit expressions for $\det(M)$ and $\det(L)$ using the definition of $L$ and the 
fact that the matrices $M$ and $N$ are lower-triangular.
The lemma follows.
\end{proof}

Set $b=b^{(w)}$ to be the $\Pow(w)\times \Pow(w)$-submatrix of $a=a^{(w)}$, so that 
$b^{(w)}_{\lda} = p^{rl(\lda)}$ for all $\lda\in \Pow(w)$. 
Define $Y=Y^{(w)} = NbN^{-1}$ (where $N=N^{(w)}$). 
In Section~\ref{sec:fin} we will prove the following result.

\begin{thm}\label{thm:pow}
 The elementary divisors of $Y$ are $p^{c_{p,r} (\lda)}$, $\lda\in \Pow(w)$.
\end{thm}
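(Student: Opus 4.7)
The plan is to construct the Smith normal form of $Y^{(w)}$ over $\mZ_{(p)}$ explicitly. First, I would observe that $N = N^{(w)}$ is upper triangular with respect to the refinement order on $\Pow(w)$, with $N_{\lambda\lambda} = \prod_k m_{p^k}(\lambda)!$, so $Y = N b N^{-1}$ is also upper triangular with diagonal entries $Y_{\lambda\lambda} = p^{rl(\lambda)}$. As a sanity check I would verify
\[
\sum_{\lambda\in\Pow(w)} c_{p,r}(\lambda) = \sum_{\lambda\in\Pow(w)} rl(\lambda),
\]
matching $v_p(\det Y)$ with the $p$-adic valuation of the product of the claimed elementary divisors. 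The key structural observation driving the whole argument is that $c_{p,r}(\lambda)$ depends only on the truncation $(m_0,\ldots,m_{r-1})$ of the multiplicity sequence $(m_k)_{k\ge 0}$ of $\lambda$ (where $m_k = m_{p^k}(\lambda)$), even though the diagonal entries of $Y$ depend on all the $m_k$.

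I would exploit this truncation structure by partitioning $\Pow(w)$ into fibers. For each admissible tuple $(m_0,\ldots,m_{r-1})$ with $\sum_{k<r} m_k p^k \le w$ and $p^r \mid w - \sum_{k<r} m_k p^k$, the fiber of $\lambda$'s with that truncation is in bijection, via scaling the parts $\ge p^r$ by $p^{-r}$, with $\Pow((w-\sum_{k<r} m_k p^k)/p^r)$. The goal is then to find unimodular row and column operations over $\mZ_{(p)}$ that make $Y^{(w)}$ block-diagonal with respect to this fiber partition, with each block contributing the single elementary divisor $p^{c_{p,r}(\lambda)}$ repeated according to the size of the fiber.

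The argument would proceed by induction on $w$, with base case $p^r > w$ (the simpler regime flagged in Remark~\ref{rem:simplecase}). There every $\lambda\in\Pow(w)$ satisfies $m_k = 0$ for $k\ge r$, each truncation fiber is a singleton, and the formula collapses to
\[
c_{p,r}(\lambda) = \sum_{k}(r-k)m_k + v_p(N_{\lambda\lambda}) = v_p\bigl(b_\lambda\,N_{\lambda\lambda}/\textstyle\prod_i \lambda_i\bigr),
\]
so a direct diagonal reduction of the triangular factorization $Y = NbN^{-1}$ should suffice. The inductive step is to reduce the case where some $\lambda$ have parts $\ge p^r$ to smaller instances $Y^{(w')}$, using an explicit combinatorial identity for $N^{(w)}$ that factors through the scaling map on parts $\ge p^r$ and through the wreath-product-like structure embedded in the multiplicity tuple.

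The main obstacle I anticipate is explicitly performing the row and column operations over $\mZ_{(p)}$ that clear the off-diagonal entries of $Y$ between distinct truncation fibers. These entries arise from the combinatorics of the sets $\scr M_{\lambda\mu}$, and showing that the cancellations work out to leave precisely $p^{c_{p,r}(\lambda)}$ should require delicate factorial accounting: the $\sum_{k<r} d_p(m_k)$ term in $c_{p,r}(\lambda)$ must match exactly the $p$-adic valuation of the factorials that survive the reduction, while the $\sum_{k<r}(r-k)m_k$ term absorbs the excess of $p^{rl(\lambda)}$ above the $\prod_i \lambda_i$ contribution. Getting this bookkeeping right, uniformly over all admissible truncations, is where the combinatorial heart of the proof must lie.
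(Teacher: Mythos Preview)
Your structural intuition is correct and matches the paper: the truncation fibers (the paper's $\Pow_{\kappa}$, indexed by those $\kappa\in\Pow(w)$ with no part exceeding $p^r$) are the right block decomposition, and the observation that $c_{p,r}(\lambda)$ depends only on $\lambda^{<r}$ is indeed the organising principle. Your base-case formula is also correct.

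However, the inductive mechanism you propose has a genuine gap. Within a fiber $\Pow_{\kappa}$, every partition has the \emph{same} value $c_{p,r}(\kappa)$, so that block of the Smith form must be $p^{c_{p,r}(\kappa)}$ times the identity. If you identify the fiber with $\Pow(n_r(\kappa))$ via $\lambda\mapsto\lambda^{\ge r}$ and try to invoke the inductive hypothesis on $Y^{(n_r(\kappa))}$, you obtain elementary divisors $p^{c_{p,r}(\mu)}$ for $\mu\in\Pow(n_r(\kappa))$, which are \emph{not} all equal. So the within-fiber problem is not a smaller instance of the same theorem, and it is unclear what statement an induction on $w$ would actually establish. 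You rightly flag the between-fiber cancellations as the main obstacle, but the proposal supplies no mechanism for them beyond the hope that ``delicate factorial accounting'' will work out.

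The paper does not induct on $w$. It first uses the duality of the $\mathtt h$ and $\mathtt m$ bases to show that $N$ is row-equivalent over $\mZ_{(p)}$ to $(N^{\tr})^{-1} z$, so $Y$ is equivalent to the symmetric form $Y' = N b z^{-1} N^{\tr}$. It then factors $N=AC$ with $C$ block-diagonal (the $\kappa$-block being a copy of $N^{(n_r(\kappa))}$ under the scaling identification); setting $U=(x^{<r})^{-1}A$, a chain of commutations transforms $Y'$ into $Y'' = x^{<r}\, U\, b^{<r}(x^{<r})^{-1}(y^{<r})^{-1}\, V\, x^{<r}$ with $V$ conjugate to $U^{\tr}$ by a block-diagonal unit. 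The combinatorial heart is Lemma~\ref{lem:U}: $U$ restricts to the identity on every diagonal block, and off-block one has the strict bound $v_p(U_{\lambda\mu}) > k_{\lambda} - k_{\mu}$ with $k_{\lambda} = f_{\lambda} - e_{\lambda}$. A general sandwich lemma (Lemma~\ref{lem:invfac}) then allows $U$ and $V$ to be removed from the product without altering the invariant factors, leaving the diagonal matrix $b^{<r} x^{<r}(y^{<r})^{-1}$ whose $(\lambda,\lambda)$-entry has valuation exactly $c_{p,r}(\lambda)$. All of the factorial bookkeeping you anticipate is packed into the proof of that strict inequality; no recursion to smaller $w$ is used.
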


Assuming this, we can deduce Theorem~\ref{thm:1t1p} as follows. 
By Lemma~\ref{lem:Ntensor}, $X' = MaM^{-1}$ is equivalent to
$X''=LaL^{-1}$ over $\mZ_{(p)}$. Recall that $L$ is block-diagonal with respect to the set partition
$\Par(w) =\bigsqcup_{\nu\in\Par'(w)} \Par(w,\nu)$. Since $a$ is diagonal, the 
matrix $LaL^{-1}$ is block-diagonal with respect to the same set partition.

Consider the block $X''(\nu)$ of $LaL^{-1}$ corresponding to $\nu\in \Par'(w)$. 
If $\lda\in \Par(w,\nu)$ and $\iota(\lda) = (\lda^j)_{j\in \mN_{p'}}$, then 
$l(\lda) = \sum_j l(\lda^j)$, so 
$a_{\lda} = \prod_j b^{(m_j (\nu) )}_{\lda^j}$. That is, after we apply the identification $\iota$ 
to convert $\Par(w,\nu)\times \Par(w,\nu)$-matrices into 
$(\prod_j \Pow(m_j (\nu)))\times (\prod_j \Pow(m_j (\nu)))$-matrices, the $\nu$-block of $a$ becomes 
equal to $\otimes_j \, b^{(m_j (\nu))}$; and, by~\eqref{eq:defL}, the $\nu$-block of $L$ becomes 
$L(\nu) = \otimes_j \, N^{(m_j (\nu))}$. So $X''(\nu)$ becomes 
$\otimes_{j} Y^{(m_j (\nu))}$. Therefore, by Theorem~\ref{thm:pow}, $X''(\nu)$ is equivalent over $\mZ_{(p)}$ to the diagonal matrix with entries 
$\prod_{j} p^{c_{p,r} (\lda^j)}$, where $(\lda^j)_j$ runs through 
$\prod_{j\in \mN_{p'}} \Pow(m_j (\nu))$. 
But if $\lda\in \Par(w,\nu)$ is such that $\iota(\lda) = (\lda^j)$, then
$m_{jp^t} (\lda) = m_{p^t} (\lda^j)$ for all $j\in \mN_{p'}$ and $t\ge 0$, and so
\[
 \begin{split}
  c_{p,r} (\lda) &= \sum_{\substack{n\in \mN \\ 0\le v_p (n)<r }} 
\Big( (r-v_p (n)) m_n(\lda) + d_p(m_n (\lda)) ) \Big) \\
&= \sum_{j\in \mN_{p'}} \sum_{t=0}^{r-1} \Big( (r-t) m_{p^t} (\lda^j) + d_p (m_{p^t} (\lda^j)) \Big) \\
& = \sum_{j\in \mN_{p'}} c_{p,r} (\lda^j).
 \end{split}
\]
(The second equality is obtained by substituting $n=jp^t$.) 
Hence, $X''(\nu)$ is equivalent  to $\diag\{ (p^{c_{p,r} (\lda)})_{\lda\in\Par(w,\nu)} \}$
over $\mZ_{(p)}$. Therefore, $X''$ is equivalent to 
$\diag\{ (p^{c_{p,r} (\lda)})_{\lda\in \Par(w)} \}$ over $\mZ_{(p)}$, and hence over $\mZ$ 
(see Remark~\ref{rem:Zp}). Since $X$ is equivalent to $X''$, we have shown that 
Theorem~\ref{thm:1t1p} is implied by Theorem~\ref{thm:pow}.

\section{Proof of Theorem~\ref{thm:pow}}\label{sec:fin}

Recall that Theorem~\ref{thm:pow} is concerned with the $\Pow(w)\times \Pow(w)$-matrix $Y=NbN^{-1}$ where 
$N_{\lda\mu} = |\scr M_{\lda\mu}|$ (cf.\ the definition before Eq.~\eqref{eq:M}) and $b_{\lda} = p^{r l(\lda)}$ for all 
$\lda,\mu\in \Pow(w)$. 
Let $z= \diag\{ (z_{\lda})_{\lda \in \Pow(w)} \}$ (see~\eqref{eq:defz}).

\begin{lem}\label{lem:Nequiv}
 The matrix $N$ is row equivalent to $(N^{\tr})^{-1}z$ over $\mZ_{(p)}$.
\end{lem}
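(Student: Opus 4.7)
The plan is to prove the lemma by showing that $N$ and $(N^{\tr})^{-1}z$ have the same row span as $\mZ_{(p)}$-submodules of $\mQ^{\Pow(w)}$; since both matrices are invertible over $\mQ$ and have the same number of rows, this equality of row spans implies the required row equivalence over $\mZ_{(p)}$. The common row span will turn out to be
\[
 \scr L := \{(\xi(g_\mu))_{\mu \in \Pow(w)} : \xi \in \cl C_{(p)}(S_w)\} \subseteq \mQ^{\Pow(w)},
\]
the lattice of values of virtual $\mZ_{(p)}$-characters of $S_w$ at $p$-elements.

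That the $\mZ_{(p)}$-row span of $N$ equals $\scr L$ is immediate from Lemma~\ref{lem:Mol}: that lemma identifies it with the row span of $\bar M^{(w)}$, whose rows $(h_\lda(g_\mu))_{\mu\in\Pow(w)}$ run over $\lda\in\Par(w)$; since the Kostka matrix is upper-unitriangular, $\{h_\lda\}_{\lda\in\Par(w)}$ is a $\mZ$-basis of $\cl C(S_w)$, and therefore these rows $\mZ_{(p)}$-span $\scr L$. For the row span of $(N^{\tr})^{-1}z$ I introduce the auxiliary $\Par(w)\times\Par(w)$ matrix $F := M^{-\tr}z$; a short calculation using that $(\mtt h,\mtt m)$ is a dual pair in $\Lda_{\mQ}$ gives $F_{\lda\mu}=m_\lda(g_\mu)$, where $\mtt m=(m_\lda)$ is the monomial basis. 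Since $\{s_\lda\}$ is an orthonormal $\mZ$-basis of $\cl C(S_w)$ the lattice $\cl C(S_w)$ is self-dual for $\lan\cdot,\cdot\ran$, so the dual basis $\{m_\lda\}_{\lda\in\Par(w)}$ of $\{h_\lda\}$ is again a $\mZ$-basis of $\cl C(S_w)$, and the rows of $F|_{\Par(w)\times\Pow(w)}$ therefore also $\mZ_{(p)}$-span $\scr L$ by the same argument. To link this to $(N^{\tr})^{-1}z$, I use Lemma~\ref{lem:Mti} to write $\widetilde M = UM$ for some $U\in\GL_{\Par(w)}(\mZ_{(p)})$; then $\widetilde M^{-\tr}z=(U^{-1})^{\tr}F$ is row equivalent to $F$ over $\mZ_{(p)}$, and this equivalence is preserved after restricting both matrices to the columns indexed by $\Pow(w)$. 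Since $\widetilde M$ is block-diagonal for $\Par(w)=\bigsqcup_{\nu}\Par(w,\nu)$ with $(1^w)$-block equal to $N$ (as $\Par(w,(1^w))=\Pow(w)$), the matrix $\widetilde M^{-\tr}z$ is also block-diagonal, with $(1^w)$-block $(N^{\tr})^{-1}z$; hence $(\widetilde M^{-\tr}z)|_{\Par(w)\times\Pow(w)}$ has zero rows outside $\Pow(w)$ and the remaining rows form exactly $(N^{\tr})^{-1}z$. Combining, the $\mZ_{(p)}$-row span of $(N^{\tr})^{-1}z$ equals that of $F|_{\Par(w)\times\Pow(w)}$, which is $\scr L$.

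The main conceptual step is to recognise $(N^{\tr})^{-1}z$ as the $(1^w)$-block of the block-diagonal matrix $\widetilde M^{-\tr}z$; once this is seen, Lemma~\ref{lem:Mti} transports its $\mZ_{(p)}$-row span back to the transparent dual-basis matrix $F=M^{-\tr}z$, and the rest is essentially bookkeeping with the $\mZ$-bases $\{h_\lda\}$ and $\{m_\lda\}$ of $\cl C(S_w)$.
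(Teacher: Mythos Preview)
Your argument is correct and essentially the same as the paper's proof: both identify $(M^{\tr})^{-1}\hat z$ as the transition matrix $M(\mtt m,\ti{\mtt p};w)$ for the monomial basis, use that $\{m_\lda\}$ is a $\mZ$-basis of $\cl C(S_w)$ to obtain row equivalence with $M$, then apply Lemma~\ref{lem:Mti} and read off the $(1^w)$-block of the resulting block-diagonal matrices. Your explicit introduction of the lattice $\scr L$ and appeal to Lemma~\ref{lem:Mol} for the $N$ side are minor cosmetic variations; note also that the symbol $z$ in your definition of $F$ should strictly be the full $\Par(w)\times\Par(w)$ diagonal matrix $\hat z$ rather than its $\Pow(w)$-restriction.
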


\begin{proof}
 Let $\mtt m = (m_{\lda})$ be the graded basis of $\CF(S_w)$ 
 such that $(\mtt h, \mtt m)$ is a
dual pair (cf.~\cite[Chapter I, Eq.\ (4.5)]{Macdonald1995}). Since $\mtt h$ is a $\mZ$-basis of $\cl C(S_w)$, the same is true for $\mtt m$. 
Hence, the transition matrix $M(\mtt m, \ti{\mtt p}; w)$ is row equivalent to $M$ (recall that
$M = M(\mtt h,\ti{\mtt p}; w)$). Since $(\mtt h,\mtt m)$ and $(\mtt p,\ti{\mtt p})$ are dual pairs,
$M(\mtt m, \mtt p; w)=(M^{\tr})^{-1}$. Hence, $M(\mtt m, \ti{\mtt p}; w) = (M^{\tr})^{-1} \hat{z}$, 
where $\hat{z} = \diag\{ (z_{\lda})_{ \lda\in \Par(w) } \}$. 

So $(M^{\tr})^{-1} \hat{z}$ is row equivalent to $M$ (over $\mZ$). On the other hand,
by Lemma~\ref{lem:Mti}, there exists $U\in \GL_{\Par(w)} (\mZ_{(p)})$ such that $M=U\witi{M}$.
We have
\[
(M^{\tr})^{-1} \hat{z} = ((U\witi{M})^{\tr})^{-1} \hat{z} = 
(U^{\tr})^{-1} \Big( (\witi{M}^{\tr})^{-1} \hat{z} \Big). 
\]
Therefore, $(\witi{M}^{\tr})^{-1} \hat{z}$ is row equivalent over $\mZ_{(p)}$ to $M$, and hence to $\witi{M}$.
But $\witi{M}$ and $(\witi{M}^{\tr})^{-1} \hat{z}$ are both block-diagonal with respect to the decomposition
$\Par(w) = \bigsqcup_{\nu\in \Par'(w)} \Par(w,\nu)$; and the blocks of these two matrices corresponding to
$\nu=(1^w)$ are $N$ and $(N^{\tr})^{-1} z$ respectively. The result follows. 
\end{proof}

Due to Lemma~\ref{lem:Nequiv}, $Y=NbN^{-1}$ is equivalent over $\mZ_{(p)}$ to 
\begin{equation}\label{eq:Ydash}
 Y' = Nb ((N^{\tr})^{-1} z)^{-1} = N b z^{-1} N^{\tr}.
\end{equation}

Let $\lda\in \Pow$. In the sequel, we will write $n_{i} (\lda) = m_{p^i} (\lda)$ for all $i\ge 0$.
 We define partitions 
$\lda^{<r}, \lda^{\ge r}, \bar{\lda} \in \Pow$ as follows: for all $i\ge 0$, 
 \begin{align}
     n_i (\lda^{<r}) &= 
      \begin{cases}
	  n_i (\lda) & \text{if } i<r, \\
	  0 & \text{otherwise,}
      \end{cases} \\
      n_i (\lda^{\ge r}) &= n_{r+i} (\lda), \\
      n_i (\bar{\lda}) &= 
	\begin{cases}
	  n_i (\lda) & \text{if } i<r, \\
	  \sum_{j \ge r} p^{j-r} n_j(\lda) & \text{if } i=r, \\
	  0 & \text{if } i\ge r. 
	\end{cases}
 \end{align}
(Thus, $|\bar\lda|=|\lda|$ and $\bar{\lda}$ is obtained from $\lda$ by splitting all parts of size at least $p^r$ into parts of size exactly $p^r$.) Note that
$|\lda| = |\lda^{<r}| + p^r |\lda^{\ge r}|$. 

Let $\cl K$ denote the set of all $\ka \in \Pow(w)$ such that 
$\ka = \bar{\ka}$ (i.e.\ $n_i (\ka)=0$ for all $i>r$). 
For each $\ka\in \cl K$ define
\[
 \Pow_{\ka} = \{ \lda\in \Pow(w) \mid \bar{\lda} = \ka \}. 
\]
 We have $\Pow(w) = \bigsqcup_{\ka\in\cl K} \Pow_{\ka} (w)$.
 In the sequel, ``blocks'' of a $\Pow(w)\times \Pow(w)$-matrix 
 are understood to be ones corresponding to this partition of $\Pow(w)$.
 In particular, a $\Pow(w)\times \Pow(w)$-matrix
$Z$ is said to be \emph{block-diagonal} if $Z_{\lda\mu} =0$ whenever 
$\bar\lda \ne \bar\mu$. Further, $Z$ is \emph{block-scalar} if 
$Z_{\lda\mu} = \al_{\bar{\lda}} \de_{\lda\mu}$ for all $\lda,\mu\in \Pow(w)$,
where $(\al_{\ka})_{\ka\in \cl K}$ is a tuple of rational numbers.

\begin{remark}\label{rem:simplecase}
In the case when $p^r >w$, we have 
$\Pow_{\ka} (w) = \{ \ka \}$ for all $\ka\in \cl K$, and the proof below becomes much simpler
(in particular, see Remark~\ref{rem:Usimpl}).
The reader may find it helpful to consider the case $p^r>w$ in the first 
instance. Roughly speaking, the proof in the general case is obtained
by applying the (trivial) proof for the case $r=0$ ``within blocks'' and 
the proof for the case $p^r>w$ ``between blocks''.
\end{remark}

For each $\lda\in \Pow$ define
\begin{align*}
  x_{\lda} &=  \prod_{i\ge 0} n_i(\lda)! \qquad\text{and} \\
  y_{\lda} &= \prod_{i\ge 0} p^{i n_i (\lda)}.
 \end{align*}
Note that $z_{\lda} = x_{\lda} y_{\lda}$.
Define $x= \diag\{ (x_{\lda})_{\lda\in \Pow(w)} \}$
and $y = \diag\{ (y_{\lda})_{\lda\in\Pow(w)} \}$.

Define diagonal $\Pow(w)\times \Pow(w)$-matrices $x^{<r}, x^{\ge r}, y^{<r}, y^{\ge r}, \ti{y}$ as follows:
for all $\lda\in \Pow(w)$, 
\begin{align*}
   x^{<r}_{\lda} &= \prod_{0\le i<r} n_i(\lda)! = x_{\lda^{<r}}, \\
  x^{\ge r}_{\lda} &= \prod_{i\ge r} n_i(\lda)! = x_{\lda^{\ge r}}, \\
  y^{<r}_{\lda} &= \prod_{0\le i<r} p^{in_i(\lda)} = y_{\lda^{<r}}, \\
  y^{\ge r}_{\lda} &= \prod_{i\ge r} p^{(i-r)n_i(\lda)} = y_{\lda^{\ge r}}, \\
  \ti{y}_{\lda} &= \prod_{i\ge r} p^{rn_i(\lda)}.
\end{align*}
It is easy to verify that 
\begin{align}
  x &= x^{<r} x^{\ge r} \qquad \quad \text{and} \label{eq:x} \\
  y &= y^{<r}  y^{\ge r}  \ti{y}.\label{eq:y}
\end{align}
Define a $\Pow(w)\times \Pow(w)$-matrix $C$ as follows: 
\begin{equation}\label{eq:defC}
 C_{\lda\mu} = 
\begin{cases}
 N_{\lda^{\ge r}, \, \mu^{\ge r}}^{( |\lda^{\ge r}| )} & \text{if } \bar{\lda} = \bar{\mu}, \\
 0 & \text{otherwise,}
\end{cases}
\end{equation}
so that $C$ is block-diagonal. 
For each $\ka\in\cl K$ let $C(\ka)$ be the $\Pow_{\ka} (w)\times \Pow_{\ka}(w)$-submatrix 
of $C$. Let $A = NC^{-1}$, so that 
\begin{equation}\label{eq:AC}
 N = AC.
\end{equation}  
Let 
\begin{equation}\label{eq:blessr}
b^{<r} = b\tilde{y}^{-1},
\end{equation}
so that $b^{<r}_{\lda} = p^{rl(\lda^{<r})}$ for all $\lda$.
Note that $b^{<r}$, $x^{<r}$ and $y^{<r}$ are block-scalar, and hence these matrices commute with $C$.

Let $\ka\in \cl K$. We have a bijection from $\Pow_{\ka} (w)$ onto $\Pow(n_r(\ka))$ given by 
$\lda \mapsto \lda^{\ge r}$. After relabelling of rows and columns via this bijection, $C(\ka)$ becomes 
$N^{(n_r (\ka))}$. Hence, by Lemma~\ref{lem:Nequiv}, $C(\ka)$ is row equivalent over $\mZ_{(p)}$ to 
the matrix $((C(\ka))^{\tr})^{-1} x^{\ge r}(\ka) y^{\ge r}(\ka)$, where
$x^{\ge r}(\ka)$ and $y^{\ge r}(\ka)$ are the $\Pow_{\ka} (w)\times \Pow_{\ka}(w)$-submatrices of $x^{\ge r}$ and
$y^{\ge r}$ respectively. So there is 
$S{(\ka)} \in \GL_{\Par_{\ka} (w)} (\mZ_{(p)})$ such that
\[
   ((C(\ka))^{\tr})^{-1} x(\ka) y(\ka) = S{(\ka)} C{(\ka)}.
\]
Let $S$ be the block-diagonal $\Pow(w)\times \Pow(w)$-matrix with the $\ka$-block equal to 
$S{(\ka)}$ for each $\ka$. Then
\begin{equation}\label{eq:Ctr}
  (C^{\tr})^{-1} x^{\ge r} y^{\ge r} = SC.
\end{equation}
Define
\begin{equation}\label{eq:B}
 B = S^{-1} A^{\tr} S.
\end{equation}
We have
\begin{flalign}
 Y' &=  \mathrlap{Nbx^{-1} y^{-1}N^{\tr}} &  \text{by~\eqref{eq:Ydash}} \notag \\
&= \mathrlap{AC b\ti{y}^{-1} (x^{<r})^{-1} (y^{<r})^{-1} (x^{\ge r})^{-1} (y^{\ge r})^{-1} C^{\tr} A^{\tr} }
& \text{by~\eqref{eq:x}, \eqref{eq:y}, \eqref{eq:AC}}  \notag \\
 &= \mathrlap{ AC b^{<r} (x^{<r})^{-1} (y^{<r})^{-1} ((C^{\tr})^{-1} x^{\ge r} y^{\ge r})^{-1}  A^{\tr} }
& \text{by \eqref{eq:blessr}} \notag \\
 &= \mathrlap{ AC b^{<r}  (x^{<r})^{-1} (y^{<r})^{-1} C^{-1} S^{-1} A^{\tr} }
& \text{by \eqref{eq:Ctr}} \notag \\
&= \mathrlap{ A b^{<r}  (x^{<r})^{-1} (y^{<r})^{-1} S^{-1} A^{\tr} }
& \text{since } C \text{ commutes with } b^{<r},\, x^{<r},\, y^{<r}  \notag \\
 \quad &= \mathrlap{ A b^{<r} (x^{<r})^{-1} (y^{<r})^{-1} B S^{-1} }
& \text{by \eqref{eq:B}}. \label{eq:long}
\end{flalign} 
Let $U= (x^{<r})^{-1} A$, so that
\begin{equation}\label{eq:U}
  A = x^{<r} U.
\end{equation}
 Then
\[
 B = S^{-1} A^{\tr} S = S^{-1} U^{\tr} x^{<r} S = S^{-1} U^{\tr} S x^{<r}
\]
because $S$ commutes with $x^{<r}$ (as $S$ is block-diagonal and $x^{<r}$ is block-scalar). 
Therefore, defining 
\begin{equation}\label{eq:V}
 V = S^{-1} U^{\tr} S, 
\end{equation}
we have $B=V x^{<r}$. Substituting this and~\eqref{eq:U} into~\eqref{eq:long}, we obtain
\[
 Y' = x^{<r} U b^{<r} (x^{<r})^{-1} (y^{<r})^{-1} V x^{<r} S.
\]
Since $S\in \GL_{\Pow(w)} (\mZ_{(p)})$, the matrix
\begin{equation}\label{eq:Ydd}
 Y'' = x^{<r} U b^{<r} (x^{<r})^{-1} (y^{<r})^{-1} V x^{<r}
\end{equation}
is equivalent to $Y'$, and hence to $Y$, over $\mZ_{(p)}$.

\begin{remark}
 If we remove $U$ and $V$ from the product on the right-hand side of~\eqref{eq:Ydd} and simplify the resulting expression, we are left with
$b^{<r} x^{<r} (y^{<r})^{-1}$. 
An easy calculation shows that $v_p (b^{<r}_{\lda} x^{<r}_{\lda} (y^{<r}_{\lda})^{-1}) = c_{p,r} (\lda)$
for all $\lda \in \Pow(w)$ (see~\eqref{eq:rho} below).
Hence, to prove Theorem~\ref{thm:pow}, it is enough to show that  removing $U$ and $V$ from the 
product~\eqref{eq:Ydd} does not change the invariant factors.
Lemma~\ref{lem:invfac} gives general sufficient conditions for this to be true for products of this kind.
The fact that these conditions hold in our case is established at the end of the 
paper using 
Lemma~\ref{lem:U}, which gives detailed information on the entries of $U$.
\end{remark}

For each $\lda\in \Pow$, define
 \begin{align}
 e_{\lda} &= \sum_{i=0}^{r-1} d_p (n_i (\lda)),   \notag \\
 f_{\lda} &= \sum_{i=0}^{r-1} (r-i) n_i (\lda), \qquad \text{and}  \label{eq:deff} \\  
 k_{\lda} & = f_{\lda} - e_{\lda}. \notag
\end{align}
Note that  
\begin{align}
 e_{\lda} &= v_p(x^{<r}_{\lda}),  \label{eq:elambda} \\
 f_{\lda} & = v_p (b_{\lda}^{<r}) - v_p(y^{<r}_{\lda}), \qquad \qquad \text{and} \label{eq:flambda} \\
 \qquad c_{p,r} (\lda) & = \sum_{0\le i<r} \Big( (r-i)n_i(\lda) + d_p (n_i(\lda)) \Big) = f_{\lda} +e_{\lda}. 
\label{eq:elplfl}
\end{align}


\begin{lem}\label{lem:U}
For all $\lda,\mu \in \Pow(w)$:
\begin{enumerate}[(i)]
\item\label{T1} $U_{\lda\mu} \in \mZ_{(p)}$;
 \item\label{T3} $U_{\lda\mu} = \de_{\lda\mu}$ if $\bar{\lda} = \bar{\mu}$;
 \item\label{T4} $v_p ( U_{\lda\mu} ) > k_{\lda} -k_{\mu}$ if $\bar{\lda} \ne \bar{\mu}$. 
\end{enumerate}
\end{lem}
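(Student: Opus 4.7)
The plan is to handle part~(ii) first by a direct combinatorial bijection, and then to use the block structure it exposes to attack parts~(i) and~(iii) by analysing the off-block entries of $A=NC^{-1}$.

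For part~(ii), with $\bar\lambda=\bar\mu=\kappa$, I would show that every $f\in\scr M_{\lambda\mu}$ must match the low parts of $\mu$ only to low positions of $\lambda$, and the high parts of $\mu$ only to high positions of $\lambda$. This follows from a conservation argument: the $\mu$-parts assigned to high positions of $\lambda$ must sum in total to $p^r|\lambda^{\ge r}|$, while each individual high part of $\mu$ is too large to fit at a low position of $\lambda$; since $|\mu^{\ge r}|=|\lambda^{\ge r}|$, the high parts of $\mu$ already saturate the high-position budget, leaving no room for low parts of $\mu$ to flow there. A largest-part-first induction (peeling off parts of sizes $p^{r-1}$, $p^{r-2}$, \dots) then forces the induced low-to-low map to be a size-preserving bijection, contributing exactly $\prod_{i<r} n_i(\kappa)!=x^{<r}_\kappa$ choices, while the high-to-high data is an arbitrary element of $\scr M_{\lambda^{\ge r},\mu^{\ge r}}$. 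This yields $N_{\lambda\mu}=x^{<r}_\kappa\cdot C(\kappa)_{\lambda\mu}$, so $A(\kappa)=x^{<r}_\kappa\mathbb I$ and $U(\kappa)=\mathbb I$, proving~(ii).

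For parts~(i) and~(iii), where $\bar\lambda\ne\bar\mu$, I would start from
\[
 U_{\lambda\mu} = \frac{1}{x^{<r}_\lambda} \sum_{\nu\in\Pow_{\bar\mu}(w)} N_{\lambda\nu}\,(C(\bar\mu)^{-1})_{\nu\mu}
\]
and expand both factors combinatorially. For $N_{\lambda\nu}$, I would stratify $\scr M_{\lambda\nu}$ by the \emph{low-to-high flow pattern}, recording which low parts of $\nu$ are routed to which high positions of $\lambda$; conservation forces each such routing to deposit a multiple of $p^r$ at every high position, with total net flow equal to $|\nu^{<r}|-|\lambda^{<r}|$. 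For $(C(\bar\mu)^{-1})_{\nu\mu}$, which under the bijection $\nu\mapsto\nu^{\ge r}$ is the inverse of $N^{(n_r(\bar\mu))}$, I would invert by explicit recursion, exploiting the lower-triangularity of $N^{(m)}$ in the lexicographic order on $\Pow(m)$ to produce an alternating sum over chains of coarsenings of $\mu^{\ge r}$.

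Splicing these two expansions rewrites $U_{\lambda\mu}$ as a signed sum over combinatorial configurations, after which~(i) and~(iii) both reduce to a $p$-adic valuation estimate on each term. The main obstacle will be the sharp bound in~(iii): the factor $(x^{<r}_\lambda)^{-1}$ contributes $-e_\lambda$, the equal-size-permutation factorials $n_i(\cdot)!$ sitting inside the low-flow count contribute positive $d_p$-values, and the multinomial coefficients arising from the flow have non-negative $p$-adic valuation by Kummer's theorem. Assembling all of these contributions so that the total exceeds $k_\lambda-k_\mu=f_\lambda-e_\lambda-f_\mu+e_\mu$, with the \emph{strict} inequality in~(iii) reflecting the hypothesis $\bar\lambda\ne\bar\mu$, will require a careful identification of the source of an extra factor of $p$; this is analogous to, but considerably more intricate than, the valuation estimates in~\cite{Hill2008} for the case $r\le p$.
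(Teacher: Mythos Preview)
Your argument for part~(ii) is correct and is essentially the paper's proof: the identity $N(\kappa)=x^{<r}_\kappa\,C(\kappa)$ follows from exactly the bijection you describe, and gives $U(\kappa)=\mathbb I$.

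For parts~(i) and~(iii), your stratification of $\scr M_{\lambda\nu}$ by ``low-to-high flow pattern'' is the right first move; it is precisely the paper's decomposition $N^{\star}=ED$, with your flow-plus-residual data encoded by the matrix $E$ and the residual refinement by $D$. After that stratification one is left with
\[
U_{\lambda\mu}=(x^{<r}_\lambda)^{-1}\sum_{\theta\in\Par(u)} E_{\lambda,(\mu^{<r},\theta)}\;\bigl(\bar M^{(u)}(N^{(u)})^{-1}\bigr)_{\theta,\mu^{\ge r}},\qquad u=|\mu^{\ge r}|,
\]
where the residual pattern $\theta$ need \emph{not} lie in $\Pow(u)$. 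The paper does not attempt to expand the factor $\bar M^{(u)}(N^{(u)})^{-1}$ combinatorially; instead it invokes Lemma~\ref{lem:Mol} (a consequence of Brauer's characterisation of characters via Proposition~\ref{prop:Msplit} and Lemma~\ref{lem:Mti}) to conclude directly that this factor is $\mZ_{(p)}$-valued. The valuation estimate for~(iii) is then carried out entirely on $E_{\lambda,(\mu^{<r},\theta)}$.

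Your plan to instead ``invert by explicit recursion'' and bound the resulting signed sum term by term has a genuine gap: individual terms in the chain expansion of $(N^{(u)})^{-1}$ carry denominators $x_\sigma=\prod_i n_i(\sigma)!$ with negative $p$-adic valuation, and these are not compensated by the $E$-factor. Concretely, take $p=2$, $r=1$, $w=8$, $\lambda=(8)$, $\mu=(2,2,2,1,1)$. There is a single flow (both $1$'s to the unique position of $\lambda$), giving $\theta=(3)\notin\Pow(3)$ and $E_{\lambda,((1,1),(3))}=1$. Expanding $(N^{(3)})^{-1}$ yields the two terms $1\cdot(-\tfrac12)$ and $1\cdot\tfrac16$, each with $v_2=-1$; their sum is $-\tfrac13\in\mZ_{(2)}$, but neither term individually lies in $\mZ_{(2)}$, so a term-by-term bound cannot even establish~(i). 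The integrality of the \emph{total} is exactly the content of Lemma~\ref{lem:Mol}, and you need that input (or an equivalent) rather than a direct chain expansion.
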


\begin{proof}
We begin with~\eqref{T3}. Consider the block $\Pow_{\ka}$ for a fixed $\ka \in \cl K$. 
Write $N(\ka)$, $A(\ka)$, $U(\ka)$ for the $\Pow_{\ka}\times \Pow_{\ka}$-submatrices of $N,A,U$ respectively. 
We have
\begin{equation}\label{eq:NxC}
N(\ka) = x_{\ka} C(\ka),
\end{equation}
that is to say, $N_{\lda\mu} = x_{\ka} N_{\lda^{\ge r}\mu^{\ge r}}^{(|\lda^{\ge r}|)}$ for all $\lda,\mu\in \Pow_{\ka}$.
Indeed, every $f\in \scr M_{\lda\mu}$ satisfies $\lda_{f(i)} = \mu_{i}$ for all $i\in [1,l(\mu)]$ such that $\mu_i<p^r$. 
Hence, such a map $f$ is determined by an element of $\scr M_{\lda^{\ge r}, \mu^{\ge r}}$ together with 
a permutation of the set $\{ i \mid \mu_i = p^t \}$ for each $t\in [0,r-1]$ (and the correspondence is bijective). 

Using~\eqref{eq:NxC} and the definition of $U$ (cf.~\eqref{eq:AC} and~\eqref{eq:U}), we obtain
\[
 U(\ka) = x_{\ka}^{-1} N(\ka)C(\ka)^{-1} = \mathbb I_{\Pow_{\ka}},
\]
so~\eqref{T3} holds.

In order to prove~\eqref{T1} and~\eqref{T4}, we first need to 
establish a decomposition of $N$ as a product, which may be informally described as follows.
If $\lda,\mu\in \Pow (w)$, then an element of $\scr M_{\lda\mu}$ may be viewed as a 
way to aggregate the parts $\mu_j$ into ``lumps'' and to associate bijectively some $i\in [1, l(\lda)]$ 
with each lump 
in such a way that $\lda_i$ is the sum of the parts $\mu_j$ in the lump. This process may be split into
two stages: first, aggregate the parts $\mu_j \ge p^r$ that are supposed to go to the same lump, without
touching the parts $\mu_j <p^r$; then, aggregate the parts $\mu_j <p^r$ with each other and with 
the lumps obtained in the first stage to obtain the desired element of $\scr M_{\lda\mu}$. 
This leads to a decomposition of $N$ as a product of two matrices. The following construction makes
this argument precise. 

Define $\Pow_{<r}$ to be the set of $\lda\in \Pow$ such that $\lda_1<p^r$ 
(or, equivalently, $\lda^{<r}=\lda$). 
Let 
\[
 \begin{split}
  \cl P &= \{ (\eta, \theta) \in \Pow_{<r} \times \Pow \mid 
|\eta| + p^r |\theta| =  w \} \quad \text{and} \\
  \cl Q &= \{ (\eta, \theta) \in \Pow_{<r} \times \Par \mid 
|\eta| + p^r |\theta| = w \}.
 \end{split}
\]
Note that  $\lda \mapsto (\lda^{<r}, \lda^{\ge r})$ is a bijection  from $\Pow(w)$ onto $\cl P$. 
For every $\Pow(w)\times \Pow(w)$-matrix $Z$, we write $Z^{\star}$ for the 
$\Pow(w)\times \cl P$-matrix obtained from $Z$ by relabelling the set of columns via this bijection; 
and $Z^{\star\star}$ denotes the $\cl P\times \cl P$-matrix obtained by relabelling both rows and columns.
Let $\iota\colon \cl P \to \Pow(w)$ be the inverse of our bijection.  
 
Let $D$ be the $\cl Q\times \cl P$-matrix defined by 
\[
D_{(\eta^1, \theta^1),\, (\eta^2, \theta^2)} = \de_{\eta^1\eta^2} M_{\theta^1 \theta^2}^{(|\theta_1|)}.
\]
For every $\lda\in \Pow(w)$ and $(\eta,\theta) \in \cl Q$ let 
$\scr E_{\lda, (\eta,\theta)}$ be the set of all pairs $(f,g)$ of maps $f\colon [1,l(\eta)]\to [1,l(\lda)]$ and
$g\colon [1,l(\lda)] \to \mZ_{\ge 0}$ such that 
\begin{enumerate}[(a)]
 \item\label{it:E1} $p^r g(t) +\sum_{i\in f^{-1} (t) } \eta_i = \lda_t$ for all $t\in [1,l(\lda)]$;
 \item\label{it:E2} $m_{u} (\theta) = |g^{-1} (u)|$ for all $u\in \mN$. 
\end{enumerate}
(We remark that for every $f$ there is at most one $g$ such that $(f,g) \in \scr E_{\lda, (\eta,\theta)}$,
due to~\eqref{it:E1}.)
Set $E_{\lda, (\eta,\theta)} = |\scr E_{\lda,(\eta,\theta)}|$, so that $E$ is a $\Pow(w)\times \cl Q$-matrix. 
We claim that 
\begin{equation}\label{eq:N=AD}
  N^{\star} = ED.
\end{equation}
Indeed, for every $\lda, \mu \in \Pow(w)$, a bijection 
\begin{equation}\label{eq:bij}
 \scr M_{\lda\mu} \longleftrightarrow \bigsqcup_{\theta\in \Par(|\mu^{\ge r}|)} 
\scr E_{\lda, (\mu^{<r},\theta)} \times \scr M_{\theta\mu^{\ge r}} 
\end{equation}
is constructed by the following rule.
Let $((f,g),h)$ belong to the right-hand side of~\eqref{eq:bij}, so that
for some $\theta\in \Par(|\mu_{\ge r}|)$ we have $(f,g) \in \scr E_{\lda, (\mu^{<r},\theta)}$ and $h\in \scr M_{\theta\mu^{\ge r}}$. 
If $q\in \scr M_{\lda \mu}$, 
we set $q \leftrightarrow ((f,g),h)$ if the following five conditions are satisfied:
\begin{enumerate}[(1)]
 \item\label{it:sim1} $f(i) = q(i + l(\mu^{\ge r}))$ for all $i\in [1, l(\mu^{<r})]$;
 \item\label{it:sim2} $g(t) =  \displaystyle\sum\limits_{ \substack{j\in [1,l(\mu^{\ge r})] \\ q(j) = t} } \mu_j$ for all $t\in [1,l(\lda)]$;
 \item\label{it:sim25} $\theta_{h(j)} = g(q(j))$ for all $j\in [1,l(\mu^{\ge r})]$.
 \item\label{it:sim3} $h(j) = h(j')$ if and only if $q(j) = q(j')$ for $j,j'\in [1,l(\mu^{\ge r})]$;
 \item\label{it:sim4} if $j,j'\in [1,l(\mu^{\ge r})]$ and $\theta_{h(j)}=\theta_{h(j')}$, then $h(j)<h(j')$ if and only if $q(j)<q(j')$.
\end{enumerate}
 (With regard to~\eqref{it:sim1} and~\eqref{it:sim2}, note that, for $i\in [1,l(\mu)]$, 
one has $\mu_i <p^r$ if and only if $i> l(\mu^{\ge r})$. Condition~\eqref{it:sim25} follows from the other ones and is included for clarity.) It is routine to verify that this rule does yield a 
bijection~\eqref{eq:bij}, proving~\eqref{eq:N=AD}.

We are now able to prove~\eqref{T1}.
Let $\eta \in \Pow_{<r}$ be such that $w-|\eta| = p^r u$ for some $u\in \mZ_{\ge 0}$.
Let $\cl P(\eta)$ (respectively, $\cl Q(\eta)$) be the set of elements of $\cl P$ (respectively, $\cl Q$)
with first coordinate $\eta$. 
Then $\cl P(\eta)$ and $\cl Q(\eta)$ may be identified with $\Pow(u)$ and $\Par(u)$ respectively via 
projection onto the second coordinate. 
Under this identification, the $\cl Q(\eta)\times \cl P(\eta)$-submatrix of $D$ 
becomes equal to $\bar M^{(u)}$.
By Lemma~\ref{lem:Mol}, $\bar M^{(u)}$ has the same row space over $\mZ_{(p)}$ as $N^{(u)}$.
Hence, $D = D'C^{\star\star}$ for some $\cl Q\times \cl P$-matrix $D'$ with entries 
in $\mZ_{(p)}$  (cf.~\eqref{eq:defC}). Due to~\eqref{eq:N=AD}, we obtain
$N^{\star} = ED' C^{\star\star}$. Using~\eqref{eq:AC}, we deduce that
$A^{\star} = ED'$, and hence, by~\eqref{eq:U},
\begin{equation}\label{eq:Ustar}
 U^{\star} = (x^{<r})^{-1} ED'.
\end{equation}

Let $\lda\in \Pow(w)$. For each $j\in [0,r-1]$, consider the  group $S_{n_j (\lda)}$
of all permutations of the set $\{ i\mid \lda_i = p^j \}$. 
For any $(\eta,\theta)\in \cl Q$, the group 
 $\prod_{0\le j<r} S_{n_j (\lda)}$ acts on 
$\scr E_{\lda, (\eta, \theta)}$ by $\s \cdot (f,g) = (\s \circ f, g)$.
This action is free because whenever $0\le j<r$ and $\lda_i = p^j$ one has $f^{-1} (j) \ne \varnothing$
for any $(f,g) \in \scr E_{\lda, (\eta,\theta)}$ (due to condition~\eqref{it:E1}).
The order of the group is $x_{\lda}^{<r}$, and therefore $x_{\lda}^{<r}$ divides $E_{\lda, (\eta,\theta)}$.
Due to~\eqref{eq:Ustar}, this implies that the entries of $U^{\star}$ lie in $\mZ_{(p)}$, 
proving~\eqref{T1}.

Finally, we prove~\eqref{T4}. 
Suppose that $\lda,\mu\in \Pow(w)$ and $\bar{\lda} \ne \bar{\mu}$, i.e.\ $\lda^{<r}\ne \mu^{<r}$. 
Fix $\theta\in \Pow(|\mu^{\ge r}|)$. 
We partition the set $\scr E_{\lda, (\mu^{<r}, \theta)}$ as follows. 
Let $\cl G$ be the set of all pairs $(g,\g)$ of maps
$g\colon [1,l(\lda)] \to \mZ_{\ge 0}$ and $\g\colon [1,l(\lda)]\to \Pow_{<r}$ such that
\begin{enumerate}[(A)]
 \item $m_{u} (\theta) = |g^{-1} (u)|$ for all $u\in \mN$;
 \item\label{it:G2} $p^r g(t) + |\g(t)| = \lda_t$ for all $t\in [1,l(\lda)]$;
\item\label{it:G3} $\mu^{<r} = \sum_{t=1}^{l(\lda)} \g(t)$.
\end{enumerate}
For each such pair $(g,\g)$ let $\scr E^{g,\g}$ be the set of maps 
$f\colon [1,l(\mu^{<r})]\to [1,l(\lda)]$ such that 
for every $t\in [1,l(\lda)]$ the partition $\g(t)$ is obtained by rearranging the multiset
$\{ \mu^{<r}_j \mid j\in f^{-1} (t) \}$ in the non-decreasing order.
It follows from the definitions that 
\begin{align}
  \scr E_{\lda, (\mu^{<r}, \theta) } & = \bigsqcup_{(g,\g)\in \cl G} \{ (f,g) \mid f\in \scr E^{g,\g} \},
\notag \\
\text{so}  \qquad 
E_{\lda, (\mu^{<r},\theta)} &= \sum_{(g,\g)\in \cl G} |\scr E^{g,\g}|. \label{eq:Esqcup}
\end{align}
Moreover, it is clear from the definition of $\scr E^{g,\g}$ that
\[
 |\scr E^{g,\g}| = 
\prod_{j=0}^{r-1} \binom{n_j (\mu)}{n_j (\g(1)), n_j (\g(2)), \ldots, n_j (\g(l(\lda)))},
\]
whence
\begin{equation}\label{eq:Eggamma}
 v_p (|\scr E^{g,\g}|) = e_{\mu} - \sum_{t=1}^{l(\lda)} e_{\g(t)} \quad \text{ for all } (g,\g) \in \cl G. 
\end{equation}

We will estimate each $e_{\g (t)}$ from above using the general inequality 
\begin{equation}\label{eq:dineq}
 d_p (s) < s \quad \text{ for all } s\in \mN,
\end{equation}
which is an easy consequence of~\eqref{eq:dp}.
Consider any $t\in [1,l(\lda)]$ such that $\g(t)\ne \varnothing$, and set $i_0 = i_0 (t) = \min\{ i \mid n_i (\g(t))\ne 0 \}$. 
We have
\[
 f_{\g(t)} - e_{\g(t)} = \sum_{i=0}^{r-1} \Big( (r-i) n_i (\g(t)) - d_p (n_i (\g(t)) \Big) > \sum_{i=0}^{r-1} (r-i-1) n_i (\g(t)) \ge r-i_0-1, 
\]
where the strict inequality is due to~\eqref{eq:dineq}.
Hence,
\begin{equation}\label{eq:i0}
 f_{\g(t)} - e_{\g(t)} \ge r-i_0 (t).
\end{equation}
Since $\g(t)\in \Pow_{<r}$, we have $i_0 (t)<r$.
Moreover, if $\lda_t < p^r$, then by~\eqref{it:G2} we have $|\g(t)|=\lda_t$, whence $i_0 (t) \le \log_p \lda_t$. 
Hence, summing~\eqref{eq:i0} over all $t\in [1,l(\lda)]$ such that $\g(t)\ne \varnothing$, we obtain
\begin{equation}\label{eq:T4_1}
 \sum_{t=1}^{l(\lda)} f_{\g(t)} - \sum_{t=1}^{l(\lda)} e_{\g(t)} \ge \sum_{t=1}^{l(\lda)} \max\{0, r-\log_p \lda_t\} = f_{\lda},
\end{equation}
where the equality is an obvious consequence of~\eqref{eq:deff}. 

We claim that the inequality~\eqref{eq:T4_1} is strict. If not, then by analysing the argument leading up to it, we see that for every $t\in [1,l(\lda)]$ such that $\lda_t<p^r$, we 
must have $i_0 (t) = \lda_t$, whence $\g(t) = (\lda_t)$. (For such $t$, we have $|\g(t)|=\lda_t$, so the condition that $\g(t)\ne \varnothing$ holds automatically.) 
But due to~\eqref{it:G3}, this implies that $\mu^{<r}=\lda^{<r}$, a contradiction. 

Combining the (strict) inequality~\eqref{eq:T4_1} with Eq.~\eqref{eq:Eggamma}, we obtain
\[
 v_p (|\scr E^{g,\g}|) > e_{\mu} +f_{\lda} - \sum_{t=1}^{l(\lda)} f_{\g(t)} =e_{\mu} +f_{\lda} - f_{\mu},
\]
where the equality holds by~\eqref{it:G3} and~\eqref{eq:deff}. 
By~\eqref{eq:Esqcup}, it follows that 
\[
v_p (E_{\lda, (\mu^{<r},\theta)} ) >e_{\mu}+f_{\lda}-f_{\mu}
\qquad \text{(for all } \theta\in \Par(|\mu^{\ge r}|) \text{)}.
\] 
Due to~\eqref{eq:Ustar}, we deduce that 
\[
 v_p (U_{\lda\mu} ) > v_p ((x^{<r}_{\lda})^{-1}) +e_{\mu} +f_{\lda} -f_{\mu} =
-e_{\lda} +e_{\mu} +f_{\lda} - f_{\mu} = k_{\lda} -k_{\mu}.
\qedhere
 \]
\end{proof}

\begin{remark}\label{rem:Usimpl}
 In the special case when $p^r >w$, the proof of Lemma~\ref{lem:U} is much simpler and may be sketched as follows. First, parts~\eqref{T1} and \eqref{T3} 
are obvious. (Note that in the given case $C = \mathbb I_{\Pow(w)}$, and so 
$U = x^{-1} N$: see~\eqref{eq:AC} and~\eqref{eq:U}.)
Secondly, part~\eqref{T4} follows from part~\eqref{T1} together with the facts
that $U_{\lda\mu}=0$ if $\scr M_{\lda\mu}=\varnothing$ and $k_{\lda} < k_{\mu}$ if $\scr M_{\lda\mu} \ne \varnothing$ and $\bar \lda \ne \bar \mu$. 
The latter inequality can easily be proved by reducing to the case when $\mu$ is obtained from $\lda$ by replacing one part $p^j$ (for some $j>0$) with $p$ parts of size $p^{j-1}$.
\end{remark}

\begin{lem}\label{lem:invfac}
Let $R$ be a discrete valuation ring with field of fractions $K$ and valuation 
$v\colon K\to \mZ\cup\{\infty \}$.
Let $I$ be a finite set. Suppose that $s,t,u, P, Q\in \GL_{I} (K)$ and $s,t,u$ are diagonal. 
Set $\rho_i = v(s_i) +v(t_i)+v(u_i)$ for all $i\in I$. 
Suppose that there exist tuples $(\al_i)_{i\in I}$ 
and $(\be_i)_{i\in I}$ of rational numbers such that for all $i,j\in I$ the following hold:
\begin{enumerate}[(i)]
 \item\label{if1} $v (t_i) = \al_i - \be_i$;
 \item\label{if2} $v (P_{ij}- \de_{ij}) > \al_i - \al_j$;
 \item\label{if3} $v(Q_{ij} - \de_{ij}) > \be_i-\be_j$;
 \item\label{if4} if $\rho_i\ge \rho_j$, then $\al_i - \al_j \ge v(s_{j}) - v(s_{i})$;
 \item\label{if5}  if $\rho_{i}\ge \rho_{j}$, then $\be_{j} - \be_i \ge v(u_j) - v(u_i)$.
\end{enumerate}
Then $sPtQu$ is equivalent to $stu$ over $R$. 
\end{lem}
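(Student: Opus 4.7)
The plan is to use a sequence of row and column operations over $R$ to reduce $M := sPtQu$ to a diagonal matrix whose $i$th diagonal entry has valuation $\rho_i$; any such matrix is $R$-equivalent to $stu$ via left-multiplication by a diagonal matrix of units in $\GL_I(R)$. The starting point is valuation estimates for the entries of $M_{ij} = s_i u_j \sum_k P_{ik} t_k Q_{kj}$, obtained from (i)--(iii) by direct computation. For $i = j$, the summand with $k = i$ contributes $v(P_{ii} t_i Q_{ii}) = v(t_i)$ (since $v(P_{ii}) = v(Q_{ii}) = 0$ by (ii), (iii)), while every other summand is strictly larger; hence $v(M_{ii}) = \rho_i$ exactly. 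For $i \ne j$, a case check on $k$ using (i)--(iii) shows that every summand $P_{ik} t_k Q_{kj}$ has valuation strictly greater than $\alpha_i - \beta_j$, yielding $v(M_{ij}) > v(s_i) + v(u_j) + \alpha_i - \beta_j$.

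Order $I$ so that $\rho_1 \ge \cdots \ge \rho_n$. For a below-diagonal position $(i,j)$ with $i > j$ (so $\rho_j \ge \rho_i$), the column operation that replaces column $j$ by column $j$ minus $(M_{ij}/M_{ii})$ times column $i$ kills the entry $M_{ij}$, and the above estimates together with (v) show that its scalar coefficient lies in $R$. Dually, for an above-diagonal $(i, j)$ with $i < j$, a row operation kills $M_{ij}$ with coefficient in $R$ by (iv). A short calculation then verifies that each such operation preserves the structural bounds: $v(M_{ii})$ remains exactly $\rho_i$ for each $i$, and the inequality $v(M_{kl}) > v(s_k) + v(u_l) + \alpha_k - \beta_l$ is maintained for every updated off-diagonal entry, so the argument iterates. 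To prevent previously-created zeros from being overwritten, I would process below-diagonal entries first in order of decreasing row index (for $i$ from $n$ down to $2$, kill $M_{ij}$ for each $j < i$ using column $i$); any later column operation on column $j$ uses some column $i' < i$, and its effect on row $i$ involves the already-cleared entry $M_{i, i'}$, so the zero at $(i, j)$ persists. A symmetric sweep of row operations then clears the above-diagonal entries.

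After both phases, $M$ has become a diagonal matrix $D$ with $v(D_{ii}) = \rho_i = v(s_i t_i u_i)$, and the desired $R$-equivalence follows via left-multiplication by $\diag\{(D_{ii}/(s_i t_i u_i))^{-1}\} \in \GL_I(R)$. The main obstacle is the structure-preservation bookkeeping; the strictness of the inequalities (ii)--(v) is essential here, since it ensures that each update has valuation strictly greater than the bound on the entry it modifies, so the bound is maintained and the next operation can also be carried out over $R$.
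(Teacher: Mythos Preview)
Your argument is correct and takes a genuinely different route from the paper's. The paper first passes to a ramified extension of $K$ so that the $\alpha_i,\beta_i$ become integers, then splits $t = t^{(1)} t^{(2)}$ with $v(t^{(1)}_i)=\alpha_i$ and $v(t^{(2)}_i)=-\beta_i$, conjugates $P,Q$ by these diagonal factors into the group $\Gamma = \{J : v(J_{ij}-\delta_{ij})>0\}$, performs an LU factorisation of the product $P'Q'$ inside $\Gamma$, and finally conjugates back, using (iv) and (v) to show the resulting outer factors lie in $\GL_I(R)$. Your approach instead works directly with the product $M=sPtQu$, establishes the entry bounds $v(M_{ii})=\rho_i$ and $v(M_{ij})>v(s_i)+v(u_j)+\alpha_i-\beta_j$ for $i\ne j$, and then eliminates by explicit column and row sweeps, checking that the bounds survive each step.

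What each buys: your method is more elementary in that it never needs the $\alpha_i,\beta_i$ to be integers (they appear only in inequalities), so the field-extension trick is avoided entirely. The paper's method is cleaner bookkeeping-wise, since the LU step happens once inside $\Gamma$ and there is no need to track that structural bounds propagate through an iterated elimination. Your ``symmetric sweep'' for the upper-triangular phase can be made precise without further induction: after the column sweep $M$ is upper triangular with the same bounds, so writing $M=(I+N')D$ with $D=\diag(M_{ii})$ and $N'_{ij}=M_{ij}/M_{jj}$ strictly upper triangular, one has $v(N'_{ij})>v(s_i)-v(s_j)+\alpha_i-\alpha_j\ge 0$ by (iv), hence $(I+N')^{-1}\in\GL_I(R)$ and $M$ is row-equivalent to $D$. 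One small imprecision: you write that the strictness of ``(ii)--(v)'' is essential, but (iv) and (v) are non-strict; what matters is that the strict inequalities in (ii) and (iii) combine with the non-strict (iv), (v) to give scalars of strictly positive valuation.
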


\begin{proof}
Let $\pi$ be a uniformising element of $R$. For $d\in \mN$, consider the simple extension $K'$
of $K$ generated by a $d$-th root of $\pi$, and let $R'$ be the integral closure of $R$ in $K'$.
Then $R'$ is a discrete valuation ring (see e.g.~\cite[Chapter 1, Proposition 17]{Serre1979}). 
If we view all the matrices in the lemma as ones
with entries in $K'$ rather than $K$, then all valuations are multiplied by $d$. Thus, choosing an
appropriate $d$, we may assume that $\al_i$ and $\be_i$ are integers for all $i$. 

Let $Z = sPtQu$. 
By~\eqref{if1}, we can represent $t$ as a product of two diagonal matrices $t^{(1)}$ and $t^{(2)}$ such that $v(t^{(1)}_{i})=\al_{i}$ and $v(t^{(2)}_{i}) = -\be_{i}$ for all $i\in I$. 
Let $P'=(t^{(1)})^{-1} P t^{(1)}$ and $Q'=t^{(2)} Q (t^{(2)})^{-1}$, so that 
$Z=st^{(1)} P'Q' t^{(2)} u$.
Consider the following subgroup $\G$ of $\GL_{I} (R)$:
\[
 \G = \{ J \in \GL_{I} (K) \mid v(J_{ij} - \de_{ij}) > 0 \; \text{ for all } i,j\in I \}. 
\]
We have $P'\in \G$. Indeed, for all $i,j\in I$, 
\[
 v(P'_{ij} - \de_{ij} ) = -v(t^{(1)}_i) + v(P_{ij} -\de_{ij}) + v(t^{(1)}_j) 
= -\al_i +v(P_{ij} - \de_{ij}) + \al_j >0 \qquad \text{by~\eqref{if2}}.
\]
Similarly, $Q'\in \G$ by~\eqref{if3}. So $P'Q'\in \G$.

Fix a total order $\le$ on $I$ such that $i\le j$ implies $\rho_i \le \rho_j$ for all $i,j\in I$. 
Using standard Gaussian elimination, one can decompose any element of $\G$ as a product of a 
lower-triangular and an upper-triangular matrix (with respect to this order) 
such that both matrices belong to $\G$. 
In particular, 
$P'Q' = JH$ for some lower-triangular $J\in \G$ and upper-triangular $H\in \G$.
Let $J' = st^{(1)} J (st^{(1)})^{-1}$ and $H'= (t^{(2)} u)^{-1} H t^{(2)} u$. Then
\[
Z = st^{(1)} JH t^{(2)} u = J' st^{(1)} t^{(2)} u H'  = J'stuH'.
\]
Now $J'$ is lower-triangular and $v(J'_{ii}-1)>0$ for all $i\in I$ because $J$ has the 
same properties. Further, if $i>j$ are elements of $I$, then $\rho_i\ge \rho_j$, and hence
\begin{align*}
 v(J'_{ij}) &= v(s_i)+ v(t^{(1)}_i) + v(J_{ij}) - v(s_j) - v(t^{(1)}_j) && \\
 & = v(J_{ij}) + \al_i - \al_j + v(s_i) - v(s_j) \ge v(J_{ij}) >0 && \text{by~\eqref{if4}}.
\end{align*}
Hence, $J'\in \G \le \GL_I (R)$. By a similar argument, it follows from~\eqref{if5} that
$H'\in \GL_I (R)$. Therefore, $Z$ is equivalent to $stu$ over $R$.
\end{proof}

We are now in a position to complete the proof of Theorem~\ref{thm:pow}.
We will apply Lemma~\ref{lem:invfac} to the product 
$Y'' = x^{<r} U b^{<r} (x^{<r})^{-1} (y^{<r})^{-1} V x^{<r}$ (see~\eqref{eq:Ydd}), 
with $\al_{\lda} = k_{\lda}/2$ and $\be_{\lda} = -k_{\lda}/2$ for all $\lda\in \Pow(w)$. We check the conditions of the lemma one
by one.

First, by~\eqref{eq:elambda} and~\eqref{eq:flambda},
\begin{align*}
 v_p( b^{<r}_{\lda} (x^{<r}_{\lda})^{-1} (y^{<r}_{\lda})^{-1} ) = 
f_{\lda} -e_{\lda} 
= k_{\lda} = \al_{\lda}- \be_{\lda},
\end{align*}
so condition~\eqref{if1} holds. 

To prove condition~\eqref{if2}, consider any $\lda,\mu\in \Pow(w)$. 
If $k_{\lda} < k_{\mu}$, then $\al_{\lda}<\al_{\mu}$ and, by Lemma~\ref{lem:U}\eqref{T1},
$v_p (U_{\lda\mu}) \ge 0 > \al_{\lda}-\al_{\mu}$.
On the other hand, if $k_{\lda}\ge k_{\mu}$, then by Lemma~\ref{lem:U}\eqref{T3},\eqref{T4},
\[
v_p(U_{\lda\mu} -\de_{\lda\mu}) > k_{\lda} - k_{\mu}\ge  (k_{\lda} -k_{\mu})/2 = \al_{\lda} - \al_{\mu}. 
\]
So condition~\eqref{if2} holds. 

By the inequality just proved, 
\[
v_p ((U^{\tr})_{\lda\mu}-\de_{\lda\mu}) > (k_{\mu} -k_{\lda})/2 = \be_{\lda}-\be_{\mu} \quad
\text{ for all } \lda,\mu\in \Pow(w).
\] 
Now $V=S^{-1} U^{\tr} S$ by~\eqref{eq:V}. The matrix $S$ is block-diagonal, and 
both $S$ and $S^{-1}$ are $\mZ_{(p)}$-valued. Further, $k_{\lda}$ depends only on $\bar\lda$ 
(i.e.\ only on the block of $\lda$). Therefore, 
$v(V_{\lda\mu} - \de_{\lda\mu})  > \be_{\lda} -\be_{\mu}$ for all $\lda,\mu\in \Pow(w)$, 
so condition~\eqref{if3} holds. 

If $\rho_{\lda}$ is defined as in  Lemma~\ref{lem:invfac}, then
\begin{equation}\label{eq:rho}
  \rho_{\lda} = v_p (x^{<r}_{\lda}) + v_p (b^{<r}_{\lda}) - v_p (y^{<r}_{\lda})  = e_{\lda} +f_{\lda} = c_{p,r} (\lda) 
\end{equation}
by~\eqref{eq:elambda}--\eqref{eq:elplfl}.
Suppose that $\lda,\mu\in \Pow(w)$ and $\rho_{\lda} \ge \rho_{\mu}$.
We have 
\[
\begin{split}
 (\al_{\lda}-\al_{\mu}) - (v_p (x^{<r}_{\mu}) - v_p (x^{<r}_{\lda}) ) 
&= \frac{f_{\lda}-e_{\lda}}{2} - \frac{f_{\mu} -e_{\mu}}{2} - (e_{\mu} - e_{\lda}) \\
 &= \frac{f_{\lda} +e_{\lda} - f_{\mu} - e_{\mu}}{2} = \frac{\rho_{\lda}-\rho_{\mu}}{2} \ge 0,
\end{split}
\]
whence $\al_{\lda}-\al_{\mu} \ge v_p (x_{\mu}^{<r}) - v_p (x_{\lda}^{<r})$. So condition~\eqref{if4} holds. 
Moreover, the same inequality means that condition~\eqref{if5} holds, as $\be_{\mu} - \be_{\lda} =\al_{\lda}- \al_{\mu}$.

By Lemma~\ref{lem:invfac}, $Y''$ (and hence $Y$) is equivalent to $x^{<r} b^{<r} (y^{<r})^{-1}$ over $\mZ_{(p)}$. 
The $p$-adic valuation of the $(\lda,\lda)$-entry of the latter matrix is $c_{p,r}(\lda)$ by~\eqref{eq:rho},
for each $\lda\in \Pow(w)$.
This completes the proof of Theorem~\ref{thm:pow} and hence of Theorem~\ref{thm:BH}.

\bigskip

\noindent
\textbf{Acknowledgement.} The author is grateful to the referee for many helpful comments that led to considerable simplification of the proofs.

%
%



\providecommand{\MR}{\relax\ifhmode\unskip\space\fi MR }
\providecommand{\MRhref}[2]{%
  \href{http://www.ams.org/mathscinet-getitem?mr=#1}{#2}
}
\providecommand{\href}[2]{#2}

\end{document}